\crefname{proposition}{proposition}{propositions}
\Crefname{proposition}{Proposition}{Propositions}
\crefname{lemma}{lemma}{lemmas} 
\Crefname{lemma}{Lemma}{Lemmas}
\crefname{corollary}{corollary}{corollaries} 
\Crefname{corollary}{Corollary}{Corollaries}
\crefname{remark}{remark}{remarks} 
\Crefname{remark}{Remark}{Remarks}
\crefname{definition}{definition}{definitions} 
\Crefname{definition}{Definition}{Definitions}
\crefname{conjecture}{conjecture}{conjectures} 
\Crefname{conjecture}{Conjecture}{Conjectures}
\crefname{axiom}{axiom}{axioms} 
\Crefname{axiom}{Axiom}{Axioms}
\newcommand{\sA}{\mathcal{A}}
\newcommand{\sD}{\mathcal{D}}
\newcommand{\sF}{\mathcal{F}}
\newcommand{\p}{\bullet}
\newcommand{\pp}{ { \p\p } }
\newcommand{\width}{\ensuremath{W}}
\newcommand{\card}[1]{\ensuremath{\# #1}}
\newcommand{\leafs}[1]{\ensuremath{\textbf{L}(#1)}}
\newcommand{\rootcell}{\ensuremath{\mathfrak{t}} }
\newcommand{\eins}{\ensuremath{\mathbf{V}}}
\newcommand{\zwei}{\ensuremath{\mathbf{S}}}
\newcommand{\einsarg}[1]{\ensuremath{ \eins}}
\newcommand{\zweiarg}[1]{\ensuremath{ \zwei }}
\newcommand{\zweiargneu}[1]{\ensuremath{ \zwei^{(#1)}}}
\newcommand{\rsrf}{\text{RSRF}}
\newcommand{\Tgeneral}{\widehat{T}}
\newcommand{\bdiff}{\ensuremath{\rho}}
\newcommand{\N}{\ensuremath{\mathbb{N}}}   
\newcommand{\R}{\ensuremath{\mathbb{R}}}   
\newcommand{\E}{\ensuremath{\mathbf{E}}}   
\newcommand{\PP}{\ensuremath{\mathbf{P}}}   
\newcommand{\1}{\ensuremath{\mathbbm{1}}}   
\newcommand{\dx}{\ensuremath{\mathrm{d}}}
\newcommand{\var}{\ensuremath{\mathbf{Var}}}
\newcommand{\quot}[1]{\text{``#1''}}
\newcommand{\bt}{\ensuremath{\mathbf{t}}}
\newcommand{\bc}{\ensuremath{\mathbf{c}}}
\newcommand{\bq}{\ensuremath{\mathbf{q}}}
\newcommand{\bR}{\ensuremath{\mathbf{R}}}
\newcommand{\iid}{\ensuremath{\overset{\text{iid}}{\sim}}}
\newcommand*{\rom}[1]{\expandafter\@slowromancap\romannumeral #1@}
\DeclarePairedDelimiterX{\inner}[2]{\langle}{\rangle}{#1, #2} 
\newenvironment{condition}[1]{%
  \manualtheoreminner
}{\endmanualtheoreminner}
   \newenvironment{proofname}[1]{\par\noindent{\bf {#1} }}{\hfill\BlackBox\\[2mm]}
\def\blfootnote{\xdef\@thefnmark{}\@footnotetext}
\begin{document}

\title{Consistency of Random Forest Type Algorithms under a Probabilistic Impurity Decrease Condition}

\author{\name Ricardo Blum \email ricardo.blum@uni-heidelberg.de \\
       \addr Institute for Mathematics\\       Heidelberg University\\
       Im Neuenheimer Feld 205\\
       69120 Heidelberg, Germany
       \AND
       \name Munir Hiabu \email mh@math.ku.dk \\
       \addr Department of Mathematical Sciences\\ University of Copenhagen\\Universitetsparken 5\\
       2100 Copenhagen Ø, Denmark
       \AND
       \name Enno Mammen \email mammen@math.uni-heidelberg.de \\
        \addr Institute for Mathematics\\
       Heidelberg University\\
       Im Neuenheimer Feld 205\\
       69120 Heidelberg, Germany
       \AND 
       \name Joseph Theo Meyer \email Joseph-Theo.Meyer@uni-heidelberg.de \\
       \addr Institute for Mathematics\\
       Heidelberg University\\
       Im Neuenheimer Feld 205\\
       69120 Heidelberg, Germany
       }

\editor{My editor}

\maketitle

\begin{abstract}
This paper derives a unifying theorem establishing consistency results for a broad class of tree-based algorithms. It improves current results in two aspects. First of all, it can be applied to algorithms that vary from traditional Random Forests due to additional randomness for choosing splits, extending
split options, allowing partitions into more than two cells in a single iteration
step, and combinations of those.
In particular, we prove consistency for  Extremely Randomized Trees,  Interaction Forests and Oblique Regression Trees using our general theorem.
Secondly, it can be used to demonstrate consistency for a larger function class compared to previous results on Random Forests if one allows for additional random splits. Our results are based on the extension of the recently introduced notion of sufficient impurity decrease to a probabilistic sufficient impurity decrease condition. 
\end{abstract}

\begin{keywords}
  random forests, regression tree, nonparametric regression, consistency, impurity decrease condition
\end{keywords}

\newpage

    \section{Introduction}\label{sec:introduction}
    \noindent Decision tree ensembles have drawn remarkable attention within the field of Machine Learning over the last decades. In particular, Breiman's Random Forests \citeyearpar{breiman} has become popular among practitioners in many fields due to its intuitive description and predictive performance. Applications include finance, genetics, medical image analysis, to mention a few, see for example \citet{gu2020_finance, GDA2, Qi_bioinformatics, criminisi_mda, criminisi_unified}. In addition, various related algorithms have been developed and popularized. Their purpose usually is increased computational efficiency and/or improvement in accuracy in some situations. Examples are the Extremely Randomized trees algorithm by \citet{extra} that heavily relies on randomization of the usual CART criterion, Oblique Regression Trees \cite[see][Section 5.2]{breiman} that split cells along hyperplanes which are not necessarily perpendicular to one of the coordinate axes and Interaction Forests of \citet{interactionforests} which differs from the usual CART by allowing partitions through certain rectangular cuts along two directions (cf. \Cref{fig:intf}).\\
    While many modifications exist and are frequently used, most theoretical results developed in the literature focus on the traditional Random Forest algorithm or on simplified versions of the modifications. Our main goal in this paper is to formulate a consistency theorem which includes all algorithms mentioned above. We build upon methods developed by \citet{Chi} who showed that regression trees built with CART are consistent imposing a so-called SID (sufficient impurity decrease) condition on the regression function $m$. The SID condition requires that for every rectangular set, there exists a single rectangular split that sufficiently improves approximation. 
    The tree modifications mentioned above are based on alternative methods for obtaining a partition in an iteration step while keeping the tree structure. They include additional randomness for choosing splits, extending split options, allowing partitions into more than two cells in a single iteration step, and combinations of those. This brings new challenges. For example, since the possible splits are chosen at random for Extremely Random Trees given a partition, we will not necessarily have a split which improves approximation significantly in every step. 
    To face such obstacles, we will argue that the approach of \citet{Chi} can be extended to show consistency of algorithms for which impurity decrease only holds in a probabilistic sense. This probabilistic SID condition depends on the class of sets which may appear in a partition using the algorithm in question. The contributions of our paper can be summarized as follows.
    \begin{itemize}
        \item We provide a general framework for the theoretical analysis of various tree based algorithms by introducing a new probabilistic impurity decrease condition. We prove that under certain conditions on the splitting procedure, tree based methods are consistent under this framework.
        \item We demonstrate the significance of our theorem by proving consistency for Extremely Randomized Trees, Interaction Forests and Oblique Regression Trees using our general framework. To the best of our knowledge we are the first to prove consistency results for Extremely Randomized Trees and Interaction Forests.  
        \item We use our general framework to show consistency for a larger
function class compared to previous results on Random Forests derived by \citet{Chi} if one allows for additional random splits. 
    \end{itemize}
    In order to prove results for a larger class of possible regression functions than covered by the theory of \citet{Chi}, we introduce a new algorithm coined Random Split Random Forests (RSRF). The advantage of RSRF is that the algorithm produces partitions into rectangles as does Random Forests, while theoretically being able to find certain pure interactions which do not satisfy SID.
    Although our approach is quite general there exist tree-based methods in the literature which do not fit into our framework. These include for example XGBoost \citep[][]{xgboost}, Bayesian Additive Regression Trees \citep[][]{bart}, Random Planted Forests \citep[][]{rpf} and Iterative Random Forests \citep[][]{basu2018iterative}. \\
  Let us summarize existing work on consistency of regression trees and Random Forests that are close to the original Random Forest algorithm in the sense that the tree building process includes the CART optimization. \citet{sco-consistency} showed consistency in the case of additive regression functions with fixed dimension and continuous regression functions. \citet{wager2015adaptive} proved that certain CART-like trees are consistent in a high-dimensional regime. Recently, \citet{klu-large} showed that regression trees and forests are consistent for additive targets in a high-dimensional setup with the dimension growing sub-exponentially, under general conditions on the component functions. 
    Moving beyond additive models, \citet{Chi} have proven consistency at polynomial rate under the SID condition. Therein, the feature space dimension is allowed to grow polynomially with the sample size. A similar result was derived by \citet{syr} for the special case of binary response variables. \citet[Section 6]{klu-large} also proved consistency for CART for models with interactions, under an empirical impurity gain condition.
    For results on consistency of oblique trees, see \citet{oblique_consistency_klusowski} and \citet{oblique_consistency_zhan}.
      We emphasize that the results cited so far and our theoretical results are applicable for algorithms with double use of the data. The latter refers to using the same data when building a partition and providing estimates given a partition. This is one of the main challenges for theoretical discussions of Random Forests. There also exist several results in the literature for simplified or stylized variants of Random Forests that avoid the double use of data. The results cover Centered Random Forests \citep{biau, klu-sharp}, Purely Random Forests \citep{genuer2012variance,biau_devroye_lugosi} and Median Trees \citep{median-trees}. Furthermore, Mondrian Trees, as introduced by \citet{mondrian-algorithm}, have proven to be rate-optimal \citep{mondrian-consistency}. In the work of \citet{wager2018estimation}, double use of data is circumvented through the honesty condition and asymptotic normality for Honest Random Forests is derived. Distributional results for ensemble methods were also derived by \cite{mentch-hooker} and applied to construct confidence intervals and hypothesis tests for Random Forests. For a theoretical contribution on Iterative Random Forests, see \citet{behr} who studied consistency of interaction recovery.
\subsection{Organisation of the Paper}
    The paper is structured as follows. In \Cref{sec:probabilistic-sid-conditions} we introduce the general nature of our probabilistic SID condition and present the main consistency result. Here, we also discuss the applications to Extremely Randomized Trees, Oblique Trees and Interaction Forests. In \Cref{sec:consistency} we introduce the algorithm RSRF in order to demonstrate how the regression function class covered can be extended. There is a short outlook in \Cref{sec:outlook}. Proofs are deferred to the appendix.
\section{Consistency under Probabilistic SID Conditions}\label{sec:probabilistic-sid-conditions}

    Consider a nonparametric regression model
    \begin{align*}
		Y_i = m(X_i) + \varepsilon_i,
    \end{align*}
    $i =1,\dots, n,$ with i.i.d. data where $m:[0,1]^d \to \R$ is measurable and $(\varepsilon_i)_{i=1,\dots, n}$ is zero mean. Here, for simplicity, we assume that the support of $X_i$ is $[0,1]^d$. We are interested in estimating the function $m$. After introducing the definition of a general tree estimator in \Cref{subsec:general-tree-estimator}, we will present a probabilistic version of SID in \Cref{subsec:probabilistic-sid-condition}. \Cref{subsection:tree-conditions} includes conditions on the tree-based algorithm and \Cref{subsec:main-consistency-result} provides our main consistency result \Cref{thm:main3}.

\subsection{General Tree Estimator}\label{subsec:general-tree-estimator}
 The notion of impurity decrease is essential to greedy tree algorithms. 
\begin{definition}[Impurity decrease]\label[definition]{def:impurity_decrease}
Given a cell $t \subseteq[0,1]^d$ and a partition into $L$ sets, $P = \lbrace t_1, \dots, t_L \rbrace$ of $t$, we define
    \begin{align*}
            \zwei \left(t; P \right)
            = \sum_{l=1}^L \PP(X \in t_l |X\in t) \big[  \mu(t_l )- \mu(t) \big]^2.
       \end{align*}
    where $\mu(A) := \E[m(X)| X \in A]$. The quantity $\zwei$ is called \emph{impurity decrease}. Furthermore, define $\eins \left( t ;P \right) := \var(m(X)|X\in t) - \zwei \left(t;P \right)$. 
\end{definition}
    Let $P = \lbrace t_1, \dots, t_L \rbrace$ be a partition of $t$ and $\sF_L$ be the class of functions which are piecewise constant on the sets in $P$. We note that the quantity $\einsarg{L}$ is the conditional mean squared approximation error
    \begin{align*}
        \min_{f \in \sF_L} \E\left[ \left(m(X)-f(X) \right)^2|X\in t \right] = \einsarg{L}\left(t;P \right).
    \end{align*}
    Therefore, $\zweiarg{L}$ measures the improvement in quality of approximation for $m$, when switching from the class of constant functions to the class $\sF_L$. We also need the empirical version of $\zwei$.
\begin{definition}[Empirical impurity decrease]\label[definition]{def:impurity_decrease_empirical}
Given a cell $t \subseteq[0,1]^d$, a partition $P = \lbrace t_1, \dots, t_L \rbrace$ of $t$ and $x_i \in [0,1]^d,y_i \in \R$, $i=1,\dots,n$, we define
 \begin{align*}
            \widehat\zwei \left(t; P \right) = \sum_{ l = 1}^L \frac{ \card{t_{l}}}{\card{t} } \big[  \hat{\mu}(t_{l}) - \hat{\mu}(t) \big]^2,
       \end{align*}
       where $\hat{\mu}(t) = (\card{t})^{-1}\sum_{i : x_i \in t}y_i$, $\card{t}  = \# \{i : x_i \in t\}$. We use the convention $\frac{0}{0}=0.$
\end{definition}
Next, we introduce general tree estimators. In our setting, a tree is a partition of $[0,1]^d$. The sets in the partition are also called leaf nodes. We only consider iterative tree constructions where the updated tree depends on the leafs of the previous tree, but not on how the previous tree was built up. Throughout the paper, given a set $t \subseteq [0,1]^d$, we denote by $\mathcal{D}(t)$ a collection of partitions of $t$. If the collection of partitions depends on the realization of some random variables $\bR$, we call it a randomized collection. Randomization is assumed to be independent across $t$. Furthermore, we assume that randomization is independent of data. We consider general tree estimators $\hat m_{\Tgeneral}$ which depend on the data $(X_1,Y_1,\dots,X_n,Y_n)$ and possibly on the additional randomization. More precisely, we are interested in estimators of the form
        \begin{align}\label{eq:general_tree_estimator} \hat{m}_{ \Tgeneral } (x) :=   \sum_{ t \in  \Tgeneral} \Big(  \frac{1}{\card{ t }} \sum_{ {i : x_i \in t} } y_i \Big) \1_{x\in t}, \ x \in [0,1]^d, \end{align}
        where $\Tgeneral$ is a partition of $[0,1]^d$ that may depend on both $(X_1,Y_1,\dots,X_n,Y_n)$ and  $\bR$. We assume that the estimator is constructed subject to the following definition. Throughout the paper, it is notationally convenient to denote the root cell by $\rootcell := [0,1]^d$. 
        \begin{definition}[General tree estimator]\label[definition]{def:general_tree_algorithm}
            The tree $\Tgeneral$ is grown using the following algorithm. Start with $T_0=\{\rootcell\}$. For depth $m=1,2,\dots,k$ apply the following steps to all current leaf nodes $t \in T_{m-1}$.
              \begin{enumerate}
            \item Determine a randomized collection $\mathcal{D}(t)$ of partitions of $t$ into Borel sets of positive Lebesgue measure (by independently drawing a realization of the randomization variables).
            \item Choose the best partition $\mathcal{P}^{\text{best}}$, where
            \begin{align*}
                \mathcal{P}^{\text{best}} \in \underset{P \in \mathcal{D}(t)}{\arg\max}\ \widehat\zwei\left(t; P \right). 
            \end{align*}
            \item Add the sets in $\mathcal{P}^{\text{best}}$ to $T_{m}$.
        \end{enumerate}
        Set $\hat{T}:=T_{k}$.
        \end{definition}
         \Cref{def:general_tree_algorithm} states that in each step of the tree building, each current leaf cell is partitioned into some possibly random candidate partitions among which the best is chosen using the score $\widehat\zwei$. It is not specified how the partitions are constructed. Note that there is a $\arg\max$ above because $\widehat{\zwei}(t;P)$ can only take finitely many values. In case of ties, choose $\mathcal{P}^{\text{best}}$ randomly.
\begin{example}[CART-split]\label{def:sample-cart}  
    Let $t \subseteq [0,1]^d$ and 
    let $\mathcal{D}(t)$ denote the partitions obtained through a rectangular cut, i.e. $\mathcal{D}(t)$ is a the set  with elements $\{t_1,t_2\}$, where $t_1 = t_1(j,c),t_2 = t_2(j,c)$,
    \[ t_1(j,c):= \{ x\in t : x_{{j} } \leq {s} \}, \quad t_2(j,c) := \{x \in t : x_{{j}} > {s}\}, \]
    with $j\in \{1,\dots,d\}$ and $s \in \{x_j: x \in t\}$. Then, the algorithm greedily partitions $[0,1]^d$ using $\widehat{\zwei}$ as a criterion to choose the splits. This is known as the CART algorithm for regression \citep[see][]{cartbook}.    
\end{example}  

\subsection{Probabilistic SID Condition}\label{subsec:probabilistic-sid-condition}
The condition below is central to our theoretical analysis of tree-based algorithms.
\begin{condition}{(C1*)}[Probabilistic SID condition]\label{cond:sid-general}
    Let $L\geq 2$. Let $\mathcal{C}$ be a class of subsets of $[0,1]^d$ and, for any $t \in \mathcal{C}$, let $\mathcal{D}(t)\subseteq 2^\mathcal{C}$ be a randomized collection of partitions of $t$ into $L$ subsets. We say that a function $m$ satisfies a probabilistic SID condition if there exists $\delta \in (0,1]$ and $\alpha_1 \geq 1$ such that for all $t \in \mathcal{C}$ it holds that
    \begin{align*}
        \var(m(X)|X\in t) \leq \alpha_1 \sup_{ P \in \mathcal{D}(t)} \zwei(t;P) \ \text{with probability at least $\delta$.}
    \end{align*}
\end{condition}
Note that besides $\delta$ and $\alpha_1$, the condition depends on $L$ (via $\zwei$) and a randomized collection $\mathcal{D}(t)\subseteq 2^\mathcal{C}$ given a $t\in \mathcal{C}$. These terms come from the specific general forest-based algorithm considered, see Definition \ref{def:general_tree_algorithm}. It aims to ensure that there is sufficient improvement in approximation in a single step of the algorithm. \Cref{cond:sid-general} generalizes the SID (sufficient impurity decrease) condition of \citet{Chi} in three ways: First, our condition accounts for partitions of arbitrary size $L$ (recall that $L$ is in the definition of $\zwei$). Second, it adapts to general tree-based algorithms that are constructed by considering partitions $\mathcal D(t)$. Third, the inequality above is only assumed to hold in a probabilistic sense, resembling the fact that the partitions $\mathcal D(t)$ under consideration may be random. We show later that \Cref{cond:sid-general} can allow for strictly larger function classes compared to the SID condition of \citet{Chi} described in the following example.
\begin{example}[Example \ref{def:sample-cart} continued]\label{ex:CART_2}
    For conventional CART, choose $\mathcal{C}$ to be the rectangular subsets of $[0,1]^d$ and let $\mathcal{D}(t)$ be the partitions obtained through a single rectangular cut of $t$ as in \Cref{def:sample-cart}. Then for $\delta = 1$ ($L=2$) the condition becomes the non-probabilistic SID condition of \citet{Chi}. In this case, \citet{Chi} proved the following result.\\[2mm]
    \emph{Suppose \Cref{cond:sid-general} with $L=2$ and $\delta = 1$ as well as Conditions \ref{cond:C2}, \ref{cond:C3}, \ref{cond:C4} introduced below hold. 
    Then
    \begin{align*}
         \E\Big[ \big(m(X) -  \hat{m}_{\hat{T}} (X) \big)^2  \Big] \rightarrow 0
    \end{align*}
    holds, where $X$ is distributed as $X_1$ and independent of $D_n =(X_1,Y_1,\dots,X_n,Y_n)$.}\\[2mm]
    Our main contribution is a generalisation of this result to include probabilistic SID \ref{cond:sid-general}, general tree-based algorithms and $L>2$.
\end{example}

        \subsection{Conditions on the Algorithm}\label{subsection:tree-conditions}
        We impose certain geometric complexity conditions on the algorithm, relying on the grid defined below. While the conditions look very technical, they are easy to verify for most common algorithms which can be seen in the next section.

        \begin{definition}[Grid, see also \citeauthor{Chi}, \citeyear{Chi}]\label[definition]{def:grid}
            Let $g_n:= \lceil n^{1+\epsilon} \rceil$ with some $\epsilon>0$. Let $b_q := q/g_n$, $q = 0,\dots, g_n$, which form $g_{n}+1$ grid points along $[0,1]$. The intervals between grid points are denoted $I_q:= [b_q,b_{q+1})$ for $q=0,\dots,g_{n-2}$ and $I_{g_{n}-1} = [b_{g_n-1},1]$. Next, we define stripes  $G(j,q) := [0,1]^{j-1}\times I_q \times [0,1]^{d-j}$. Furthermore, $[0,1]^d$ is partitioned into boxes
            \begin{equation*}
                B_{\bq} := \bigtimes_{j=1}^d I_{q_j}, \text{ where } \bq = (q_1,\dots,q_d) \in Q^d,
            \end{equation*}
            where $Q:= \{0,\dots, g_{n}-1 \}$. We set $G_n := \big\lbrace B_{\bq} : \bq \in Q^d \big\rbrace$.
            Let $G_n^{\#}$ be the set of midpoints of all elements in $G_n$.
        \end{definition}
           In what follows, denote by $\mathcal{C}$ a class of subsets of $[0,1]^d$ that includes any set which can be obtained from any possible realization of the algorithm $\Tgeneral$, and assume that $\mathcal{C}_1(t)$ contains all subsets of $t$ obtainable from a single iteration of the algorithm, meaning $\mathcal{D}(t)\subseteq 2^{\mathcal{C}_1(t)}$ for all realizations of $\mathcal{D}(t)$. Note that the conditions below indirectly effect the randomized partitions $\mathcal{D}(t)$ from Definition \ref{def:general_tree_algorithm} since they restrict $\mathcal{C}_1(t)$.
        \begin{condition}{(Tree-1)}\label{cond:T_dim}
         There exists some $B,\beta> 0$ such that
            \begin{align}\label{cardinality-bound}
                \# \{ G^{\#}_n \cap t' : t' \in \mathcal{C}_1(t) \} \leq Bn^{\beta}, \text{ uniformly over $t\in \mathcal{C}$}.
            \end{align}
        \end{condition}
        \Cref{cond:T_dim} is a complexity bound for the number of possible sets that can be constructed through a single iteration of the algorithm. Next, we define the approximation of a cell by boxes from the grid.  
        \begin{definition}[$\#$-Operator]
            For a set $t\subseteq [0,1]^d$ define $t^{\#}$ as the union of boxes $B \in G_n$ with midpoint in $t$.
        \end{definition}
            \begin{definition}
                For two sets $t' \subseteq t \subseteq [0,1]^d$ define \[ \bdiff(t,t'):= (t' \Delta t'^{\#}) \setminus ( t \Delta t^{\#}) ,\]where $\Delta$ denotes the symmetric difference between two sets, i.e. $A \Delta B = (A \setminus B) \cup (B\setminus A)$.
            \end{definition}
            We refer to \Cref{fig:oblique_difference_illustration} for an illustration.
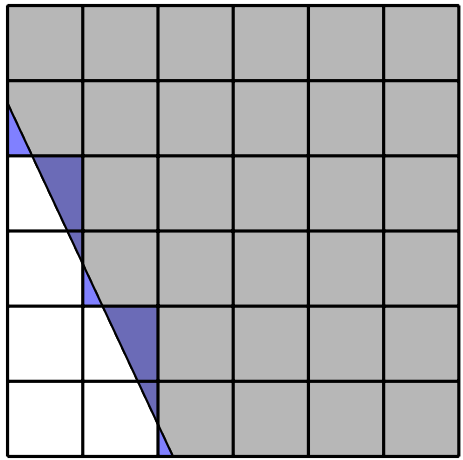
\begin{figure}
    \centering
    \begin{tikzpicture}[thick,centered]
    \coordinate (lowerleft) at (1,1);
    \coordinate (upperright) at (7,7);
    \coordinate (linestart) at (1,5.7);
    \coordinate (lineend) at (3.2,1);
    \draw[black,dashed] (linestart) -- (lineend);
    \coordinate (upperleft) at (1,7);
    \coordinate (lowerright) at (7,1);
    \begin{scope}
        \clip (1,5) rectangle (2,6);
        \draw[fill=blue!50] (linestart) -- (lineend) -- (lowerleft) -- cycle;
    \end{scope}

    \begin{scope}
        \clip (1,4) rectangle (2,5);
        \draw[fill=blue!50] (linestart) -- (lineend) -- (upperright) -- cycle;
    \end{scope}

    \begin{scope}
        \clip (1,3) rectangle (2,4);
        \draw[fill=blue!50] (linestart) -- (lineend) -- (upperright) -- cycle;
    \end{scope}

    \begin{scope}
        \clip (2,3) rectangle (3,4);
        \draw[fill=blue!50] (linestart) -- (lineend) -- (lowerleft) -- cycle;
    \end{scope}

    \begin{scope}
        \clip (2,2) rectangle (3,3);
        \draw[fill=blue!50] (linestart) -- (lineend) -- (upperright) -- cycle;
    \end{scope}

     \begin{scope}
        \clip (2,1) rectangle (3,2);
        \draw[fill=blue!50] (linestart) -- (lineend) -- (upperright) -- cycle;
    \end{scope}
        
    \begin{scope}
        \clip (3,1) rectangle (4,2);
        \draw[fill=blue!50] (linestart) -- (lineend) -- (lowerleft) -- cycle;
    \end{scope}

    \draw[fill=black!70,opacity=0.4] (upperleft) -- (upperright) -- (lowerright) -- (lineend) -- (linestart) -- cycle;    
    
    \draw [very thick, black, step=1.0cm] (1,1) grid +(6,6);

\end{tikzpicture}
    \caption{Illustration of $\rho(t,t')$ (blue parts) where $t=[0,1]^2$ and $t'$ is the area above the diagonal line.}
    \label{fig:oblique_difference_illustration}
\end{figure}
            \begin{condition}{(Tree-2)}\label{cond:T_boundary}
                There exist constants $C,B,\beta > 0$ and a class $\mathcal{H}$ of subsets $H \subseteq Q^d$ satisfying the following properties
                \begin{itemize}
                    \item[$\bullet$] for each $H\in \mathcal H$, $\# H \leq C g_n^{d-1}$,
                    \item[$\bullet$] for each $t\in\mathcal{C}$ and $t'\in \mathcal{C}_1(t)$ there exists $H\in\mathcal{H}$ such that \[\rho(t,t') \subseteq \bigcup_{\bq\in H} B_\bq,\]
                    \item[$\bullet$] $\# \mathcal H \leq Bn^{\beta}$.
                \end{itemize}
            \end{condition}
            Condition \ref{cond:T_boundary} is concerned with the approximation of a set $t$ by $t^{\#}$ in each iteration step. On the one hand, by the second and third point, the condition controls the complexity of possible deviations from a cell $t$ to its approximation $t^{\#}$ after a single step of the algorithm. On the other hand, the first and second property can also be seen as a condition on the boundaries of subsets constructed from the algorithm. Observe that $g_n^{-d}$ is the volume of a box from $G_n$ and $g_n^{-1}$ is the volume of a stripe $G(j,q)$. Therefore, we may say that the first two properties ensure that partitioning $t$ in one iteration step is a \quot{low-dimensional operation} on $t$.
            
    \subsection{Consistency}\label{subsec:main-consistency-result}
    Before stating our main theorem, we introduce three additional conditions on the distribution of $(X_i,Y_i)$. They are typical moment conditions and were also imposed by \citet{Chi}.
    \begin{condition}{(C2)}\label{cond:C2}
		$X_1$ has a Lebesgue-density $f$ that is bounded away from $0$ and $\infty$.
\end{condition}
\begin{condition}{(C3)}\label{cond:C3}
		The random variables $\varepsilon_1,\dots, \varepsilon_n, X_1,\dots,X_n, X$ are independent, where $X$ is distributed as $X_i$, and $d= O( n^{K_0})$ with some $K_0 > 0$. Furthermore, $\varepsilon_1,\dots \varepsilon_n$ are identically distributed where the distribution of $\varepsilon_i$ is symmetric around $0$ and $\E|\varepsilon_i|^q < \infty$ for some sufficiently large $q > 0$.
	\end{condition}
\begin{condition}{(C4)}\label{cond:C4}
		The regression function $m$ is bounded, i.e. \[M_0 :=  \sup_{x\in [0,1]^d}|m(x)| < \infty.\]
  \end{condition}
    \begin{theorem}[Consistency for general tree estimators]\label{thm:main3}
            Suppose that the probabilistic SID Condition \ref{cond:sid-general} holds with $\delta \geq 1-L^{-1}$. Furthermore, assume Conditions \ref{cond:C2}, \ref{cond:C3}, \ref{cond:C4} and \ref{cond:T_dim}, \ref{cond:T_boundary}. Then, there exists $c > 0$, such that for $k = k_n < c\log(n)$ with $k\to \infty$,
            \begin{align*}
                \lim_{n \to \infty} \E\left[ \left( \hat{m}_{\Tgeneral}(X) - m(X) \right)^2 \right]  = 0.
            \end{align*}
        \end{theorem}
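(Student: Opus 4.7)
The plan is to carry out a bias--variance decomposition. Let $m_{\Tgeneral}(x) := \sum_{t \in \Tgeneral} \mu(t) \1_{x \in t}$ denote the optimal piecewise-constant approximation of $m$ on the (random) tree partition $\Tgeneral$. Then
\[ \E\big[(\hat m_{\Tgeneral}(X) - m(X))^2\big] \leq 2\,\E\big[(\hat m_{\Tgeneral}(X) - m_{\Tgeneral}(X))^2\big] + 2\,\E\big[(m_{\Tgeneral}(X) - m(X))^2\big], \]
where the first term is the estimation error and the second is the approximation error. The latter equals $\E[I_k]$, with $I_m := \sum_{t \in T_m} \PP(X \in t)\var(m(X)|X \in t)$ the average within-leaf variance at depth $m$. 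The probabilistic SID condition will drive $I_k \to 0$ iteratively, while concentration controls the estimation term.

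For the estimation error, since $k < c\log n$ the number of leaves is at most $L^k \leq n^{c\log L}$; for $c$ small enough, $L^k \ll n$. After truncating the $\varepsilon_i$ (allowed by the moment tail in \ref{cond:C3}) and using the density bound \ref{cond:C2} to lower-bound the expected per-leaf sample count, a Bernstein-type inequality controls $\hat\mu(t)-\mu(t)$ on each leaf. Because $\Tgeneral$ is data-dependent, I would take a union bound over all cells the algorithm can produce; iterating \ref{cond:T_dim} over $k$ levels shows that the effective class has cardinality only polynomial in $n$, so the union bound is cheap relative to Bernstein.

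For the approximation error, the variance decomposition
\[ \PP(X \in t)\var(m(X)|X\in t) = \sum_{l=1}^L \PP(X \in t_l)\var(m(X)|X\in t_l) + \PP(X \in t)\,\zwei(t;P), \]
valid for any partition $P = \{t_1,\dots,t_L\}$ of $t$, telescopes to $I_k = I_0 - \sum_{m=1}^k \sum_{t \in T_{m-1}} \PP(X\in t)\,\zwei(t; \mathcal P^{\text{best}}_t)$. On the event that \ref{cond:sid-general} holds at $t$ (probability $\geq \delta$ over the randomization of $\mathcal D(t)$), there exists $P^* \in \mathcal D(t)$ with $\zwei(t;P^*) \geq \var(m(X)|X\in t)/\alpha_1$. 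To pass from this $P^*$ to the empirical argmax $\mathcal P^{\text{best}}_t$ I would establish
\[ \sup_{t \in \mathcal{C},\, P \in \mathcal D(t)} |\widehat\zwei(t;P) - \zwei(t;P)| \leq \epsilon_n \qquad \text{with high probability,} \]
where $\epsilon_n \to 0$; the cost of the union bound is controlled by \ref{cond:T_dim} and the geometric content of \ref{cond:T_boundary}. This yields $\zwei(t;\mathcal P^{\text{best}}_t) \geq \var(m(X)|X\in t)/\alpha_1 - 2\epsilon_n$ on the good event, and taking expectations over the independent randomizations $\mathcal D(t)$ gives
\[ \E[I_m] \leq (1 - \delta/\alpha_1)\,\E[I_{m-1}] + O(\epsilon_n), \]
which iterates to $\E[I_k] \leq (1-\delta/\alpha_1)^k I_0 + O(k\epsilon_n) \to 0$ for $k\to\infty$ with $k \asymp \log n$.

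The main obstacle I expect is the uniform convergence step and its interaction with the grid approximation $t \mapsto t^{\#}$. Condition \ref{cond:T_boundary} exists precisely to bound the ``boundary region'' $\bdiff(t,t')$ by which $t'$ differs from a union of boxes in $G_n$, enabling one to reduce the uncountable class $\mathcal C$ to a finite discretization of only polynomial cardinality; without this, $\widehat\zwei$ cannot be controlled uniformly over $\mathcal C_1(t)$. A second subtlety is the role of the threshold $\delta \geq 1 - L^{-1}$: I anticipate this enters when the expected one-step decrease above must be strengthened to a tail bound across the $L$ siblings of each split, ensuring that the number of cells at which SID fails is small enough (on average at most one per $L$-ary split) that the accumulated $O(\epsilon_n)$ errors do not swamp the geometric $(1-\delta/\alpha_1)^k$ decay as they are summed over the $L^k$ leaves. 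Doing this bookkeeping carefully, presumably via a filtration argument over tree depth, is the technically most demanding part.
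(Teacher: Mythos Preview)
Your overall decomposition into estimation and approximation error matches the paper, and your handling of the estimation term is essentially the same (modulo a minor slip: the class of cells reachable in $k$ steps has cardinality at most $(Bn^\beta)^k = \exp(O((\log n)^2))$, which is quasi-polynomial rather than polynomial; this is still harmless against sub-exponential concentration tails). The gap is in the approximation argument.

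The uniform bound $\sup_{t,P}|\widehat\zwei(t;P)-\zwei(t;P)|\leq\epsilon_n$ that your recursion relies on is not available: cells produced deep in the tree can have arbitrarily small probability and hence arbitrarily few (or zero) sample points, so $\widehat\zwei$ is not close to $\zwei$ there in any additive sense. The paper circumvents this by replacing $\hat T$ with a \emph{semi-sample} variant $\hat T_\zeta$ that switches from empirical to theoretical splits once a cell's probability drops below $\zeta=n^{-\tau}$, and by proving for this variant a \emph{multiplicative} comparison (Condition \ref{cond:C5'}): on a high-probability event, either the population gain of the chosen partition is within a constant factor $\alpha_2$ of the best candidate, or the best candidate gain is itself $\leq \alpha_2 n^{-\eta}$. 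This two-case structure does not feed into a clean one-step contraction $\E[I_m]\leq(1-\delta/\alpha_1)\E[I_{m-1}]+O(\epsilon_n)$; instead the paper argues \emph{branch by branch} (Theorem \ref{thm3}): along each root-to-leaf path, the multiplicative gain is harvested at every (C1')-good cell until one hits a good cell with small best-candidate gain, after which the residual variance on that entire subtree is already $\leq \alpha_1\alpha_2 n^{-\eta}$.

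This is also where $\delta\geq 1-L^{-1}$ actually enters, and it is not where you anticipate. Your level-wise expectation recursion, if it held, would give $\E[I_k]\leq(1-\delta/\alpha_1)^k I_0+O(k\epsilon_n)$ for \emph{any} $\delta>0$; the fact that your argument never uses the threshold is a sign that it is bypassing a real obstruction. In the paper the threshold is needed to guarantee that, with probability tending to one, \emph{every} branch of the depth-$k$ tree contains at least $M$ good cells (\Cref{lemma:goodsplits_everybranch,lemma:C1impliesC6}). The key recursion is $p_k\geq \delta+(1-\delta)p_{k-1}^L$ for the probability $p_k$ that all branches of a depth-$k$ subtree contain a good cell; one shows inductively that $p_k\geq 1-O(k^{-1})$ precisely when $\delta\geq 1-L^{-1}$. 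Without this per-branch control, a single persistently unlucky branch could carry $\Theta(1)$ approximation error all the way to the leaves, which an expectation over levels cannot rule out once the additive concentration is replaced by the weaker multiplicative/thresholded statement \ref{cond:C5'}.
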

        \begin{remark}
        We note that by the conditional Jensen inequality, the above result also implies
         \begin{align*}
                          \lim_{n\to\infty} \E\Big[ \big(m(X) -  \E[\hat{m}_{\hat{T}} (X) | D_n, X] \big)^2  \Big] = 0,
         \end{align*}
where, $D_n =(X_1,Y_1,\dots,X_n,Y_n)$. The conditional expectation can be seen as averaging over randomized trees grown on data $D_n$ and may thus be interpreted as a tree ensemble. 
     \end{remark}
 Proofs are postponed to the appendix, see also \Cref{remark:notes_proof} at the end of this section.
        \Cref{thm:main3} shows in large generality that tree based algorithms are consistent under probabilistic SID conditions. The following examples illustrate its wide-range applicability. Note that in order to show the applicability for Theorem \ref{thm:main3} to different algorithms, we only need to show Conditions \ref{cond:T_dim} and \ref{cond:T_boundary}.
        \begin{example}[Example \ref{ex:CART_2} continued]
            Let $\mathcal{C}$ be all rectangular subsets of $[0,1]^d$ and let $\mathcal{C}_1(t)$ all possible sets obtained from $t$ through a rectangular cut.
            \begin{itemize}
                \item Observe that there are at most $d(g_n+1)$ possibilities to separate the points in $G_n^{\#}$ through a rectangular cut. Thus, if $d$ is at most of polynomial order, Condition \ref{cond:T_dim} holds.
                \item Furthermore, if $\mathcal{H}$ is chosen as the collection of stripes $G(j,q)$ (more precisely, the grid points in $Q^d$ corresponding to $G(j,q)$), it is easy to see that \Cref{cond:T_boundary} is satisfied.
            \end{itemize}
            Thus, Theorem \ref{thm:main3} generalizes the results of \citet{Chi} when disregarding convergence rates.
        \end{example}
        \begin{example}[Extremely Randomized Trees]\label{ex:extratrees}
             Consider the Extremely Randomized Trees algorithm \citep[see][]{extra}. Given a rectangular cell $t$, a set of $\texttt{nsplit}$ candidate partitions $\mathcal{P}(t)$ into $L=2$ rectangles is given by $\texttt{nsplit}$ split points drawn uniformly at random, and splitting $t$ using rectangular cuts. Here, randomization can be formalized by independent random variables $\gamma \sim \operatorname{Unif}\{1,\dots,d\}$, $S \sim \operatorname{Unif}[0,1]$ (such that $\gamma$ determines the coordinate, and $S$ the proportion of the split point position along this coordinate). Then, $\mathcal{C}$ contains all rectangular subsets of $[0,1]^d$ and, as in the previous example, Conditions \ref{cond:T_dim} and \ref{cond:T_boundary} are fulfilled under the remaining assumptions of the theorem.
        \end{example}
            We emphasize that our consistency result for Extremely Randomized Trees allows for both random choices of split candidates while still using the CART-principle to evaluate these candidates. To the best of our knowledge, prior consistency results related to Extremely Randomized Trees have focused on either of the extreme cases, which are \texttt{nsplit = $\infty$} (i.e. resembling the CART algorithm), or \texttt{nsplit=1}. The latter case for $d=1$ is related to Purely Uniform Random Forests analyzed by \citet{genuer2012variance}.
        \begin{example}[Oblique Trees]\label{ex:oblique}
            Consider the special case of Oblique Regression Trees \citep{cartbook} where a cell $t$ is split into $L=2$ sets of the form $\{ x \in t : \langle b,x\rangle \leq s \}$, where $b\in \R^d$, $s \in [0,1]$ with $b_j = 0$ for $j\neq k_1,k_2$ for some $k_1,k_2$ and its complement. Thus, a set is cut by a hyperplane, not only perpendicular to an axis. By drawing random candidates for $b$, this gives rise to randomized partitions. If $d$ is of polynomial order, it can be verified that Conditions \ref{cond:T_dim} and \ref{cond:T_boundary} are fulfilled. A proof is provided in the appendix (see \Cref{subsec:proof_oblique}). For more general results on consistency of Oblique Trees, we refer to \citet{oblique_consistency_klusowski,oblique_consistency_zhan}.
        \end{example}

        \begin{example}[Interaction Forests]\label{ex:interaction_forests}
We can apply our theorem to Interaction Forest algorithm introduced by \citet{interactionforests}. Here, in each iteration step, a current set $t$ is split into two daughter cells of the form $t_1$, $t_2 = t\setminus t_1$ where, given $j_1,j_2 \in \{1,\dots,d\}$ and $c_1, c_2 \in t^{(j_1)}$ with $t^{(j)}= \{x_j: x\in t\}$, $t_1$ has one of the following forms
            \begin{itemize}
                \item $t_1 = \{ x \in t : x_{j_1} \blacklozenge_1 c_1 \text{ and }x_{j_2} \blacklozenge_2 c_2 \}$ with $\blacklozenge_1, \blacklozenge_2 \in \{ \leq, \geq \}$,
        \item $t_1 = \{ x \in t : x_{j_1} \leq c_1, x_{j_2} \leq c_2 \} \cup  \{ x \in t : x_{j_1} \geq c_1, x_{j_2} \geq c_2 \}$,
        \item $t_1 = \{ x \in t : x_{j_l} \leq c_l\}$, where $l=1,2$.
    \end{itemize}
    In \Cref{fig:intf} the different possibilities are illustrated (note that the first point accounts for 4, and the third for 2 pictures). Here, the splits are randomized, by drawing \texttt{npairs} combinations $(j_1, j_2, c_1, c_2)$ of coordinates and split points. This defines a randomized collection of partitions $\sD(t)$ consisting of $7 \times \texttt{npairs}$ partitions of $t$ of size $L=2$. Under the conditions of \Cref{thm:main3}, Interaction Forests are consistent. See \Cref{subsec:proof-main2} for a proof.
\end{example}
  \begin{figure}
  \centering
        \includegraphics[scale=0.35]{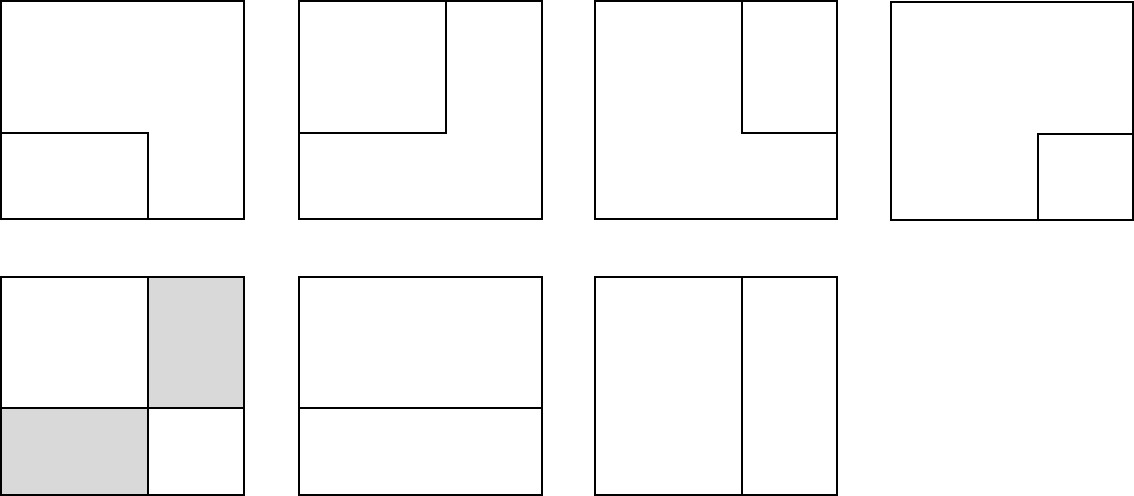}
        \caption{Illustration of candidate splits in Interaction Forests. Adapted from \citet[Figure 2]{interactionforests}}.
        \label{fig:intf}
    \end{figure}
Another example will be studied in \Cref{sec:consistency}. The class of regression functions for which the result holds is determined by the probabilistic SID Condition \ref{cond:sid-general}. Depending on the specific algorithm, the possible subsets $t\in \mathcal{C}$ that can be constructed from an algorithm may include complex shapes and thus, the probabilistic SID can become difficult to handle mathematically. See \Cref{fig:intf-partition} for an illustration of possible shapes for Interaction Forests. In the remainder, we develop a deeper understanding on the probabilistic SID condition for a specific algorithm arising naturally from Random Forests, and we show that this leads to a class of regression functions that is strictly larger than those satisfying SID for CART.
    \begin{figure}[h]
        \centering
        \includegraphics[width=0.5\linewidth]{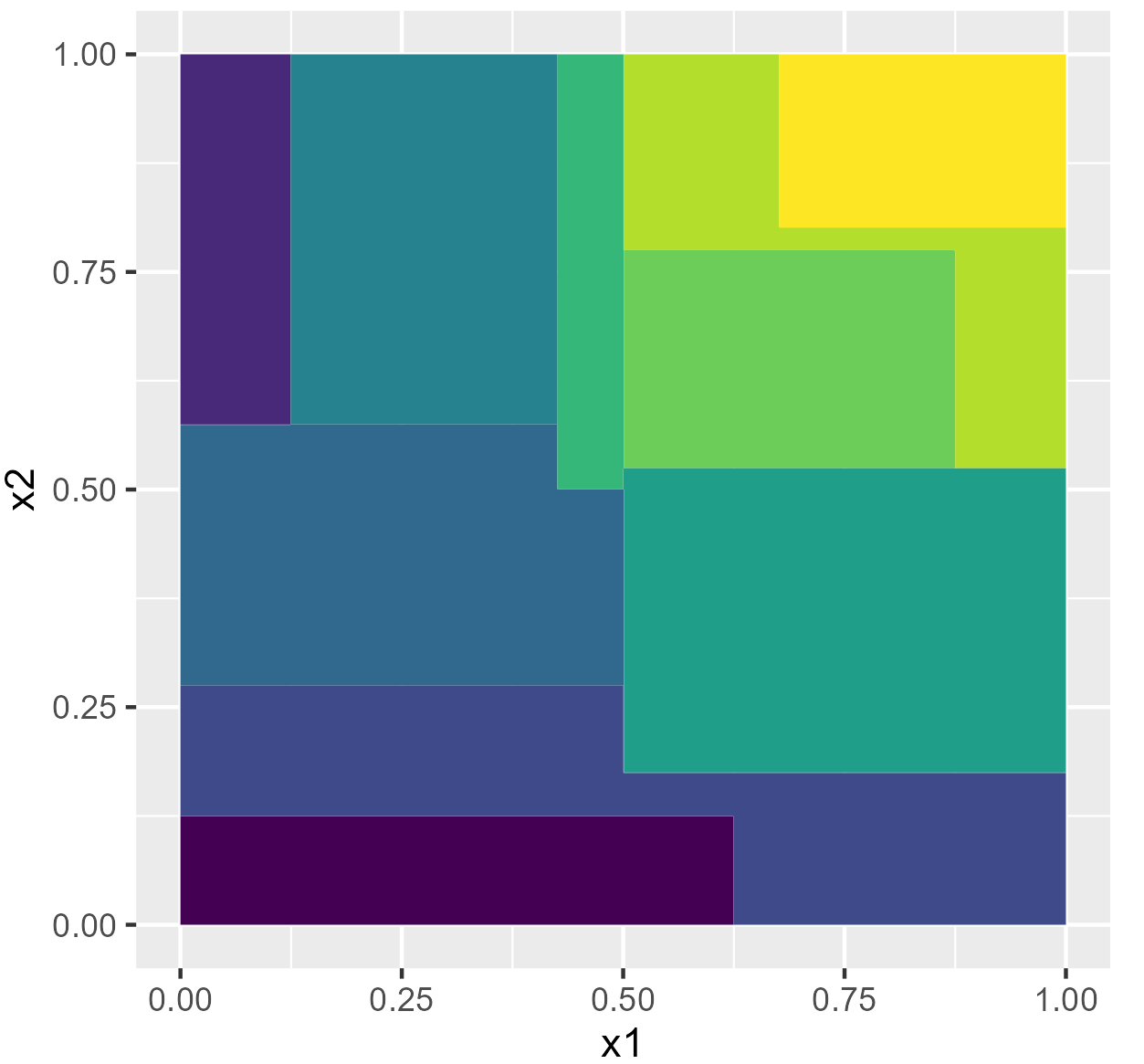}
        \caption{Example of a partition corresponding to a tree in Interaction Forests.}
        \label{fig:intf-partition}
    \end{figure}

\begin{remark}\label[remark]{remark:notes_proof}
An outline for the proof of \Cref{thm:main3} and the proofs itself are deferred to \Cref{sec:preparation_proof} and \Cref{sec:proofs} (\ref{subsec:proof_lemma_error1}-\ref{subsec:proof-main3}). We extend ideas of \citet{Chi}, but the arguments differ due to the random nature of \Cref{cond:sid-general}. Roughly speaking, $\hat{T}$ is compared to $T^*$ where the latter is the theoretical algorithm, i.e. $\widehat \zwei$ in \Cref{def:algo_theory} is replaced by $\zwei$, and to a suitably defined combination of $\hat{T}$ and $T^*$. Importantly, $T^*$ does not depend on data, but on the randomization variables. Note that, $T^*$ in the work of \citet{Chi} can also depend on randomization due to feature restriction in a CART split. In contrast to \citet{Chi}, in our setting, randomization is coupled to the probabilistic SID condition, whereas \citet{Chi} assume that the SID condition holds uniformly over all realizations. This requires a careful adaption of the arguments. Note that the impurity gain inequality from \Cref{cond:sid-general} is recursively used to control the squared error on population level. Here, due to randomness, we use that the inequality is valid sufficiently often during tree building, in each branch. Finally, granting Conditions \ref{cond:T_dim}, \ref{cond:T_boundary}, the supremum over all possible partitions generated through the algorithm can be reduced to a maximum, while at the same time deviations between population and empirical means (and related quantities) can be controlled.
\end{remark}

\section{Consistency for a Larger Function Class}\label{sec:consistency}
    In \Cref{thm:main3} we provided a general consistency result under suitable probabilistic SID conditions. The goal of this chapter is to give an example where the deterministic SID condition for CART (see Example \ref{ex:CART_2}) implies the probabilistic SID condition for another tree based algorithm - but not vice versa. To accomplish this, we consider an illustrative hybrid algorithm which combines ideas of classical regression trees with splits chosen randomly and independently of the observations. In \Cref{subsec:rsrf}, we introduce this algorithm coined Random Split Random Forests (RSRF). RSRF naturally extends the main principles in Random Forest. In particular, the constructed cells remain rectangular. In \Cref{sec:extending_function_class}, we will compare the probabilistic SID condition for RSRF with the SID condition imposed by \citet{Chi} for usual CART trees.
\subsection{Description of Random Split Random Forests}\label{subsec:rsrf}

\begin{figure}
\centering
\begin{minipage}{0.35\textwidth}
   \includegraphics[width=0.99\linewidth]{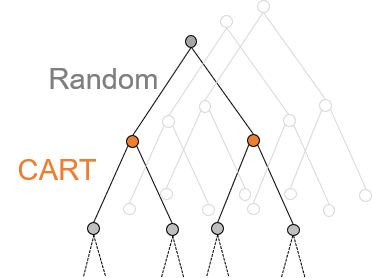}
  \captionof{figure}{Illustration of \rsrf. Background trees illustrate other possible candidate partitions.}
  \label{fig:RSRF}
\end{minipage}%
\hspace{0.5cm}
\begin{minipage}{.6\textwidth}\centering
        \resizebox{0.9 \linewidth}{0.3 \linewidth}{%
  \begin{tikzpicture}[thick,centered]
            	\node[rectangle,draw, minimum height=3cm, minimum width=4cm] (r) at (0,0) {};
			\coordinate (splitpoint)      at ($(r.north west) + 0.5*(r.south)$);
                \coordinate (splitpointend)   at ($(r.north east) + 0.5*(r.south)$) ;
                \coordinate (splitpoint02)    at ($(r.north) + 0.25*(r.east)$);
                \coordinate (splitpoint02end) at ($(splitpoint02) + 0.5*(r.south)$);
                \coordinate (splitpoint03)    at ($(r.south) + 0.2*(r.west)$);
                 \coordinate (splitpoint03end)    at ($(r.north) + 0.2*(r.west) + 0.5*(r.south)$);

				\draw (splitpoint02) -- (splitpoint02end);
			\draw (splitpoint) -- (splitpointend);
   			\draw (splitpoint03) -- (splitpoint03end);

			\node at ( $(r.north) - (0,0.4)$ ) {};
			\node at ( $(r.center) -(0,0.3)$ ) {};
  
        	\node[rectangle,draw, minimum height=3cm, minimum width=4cm] (t) at (-10,0) {};

                 \node[rectangle,draw, minimum height=3cm, minimum width=4cm] (t1) at (-5,0) {};
			\coordinate (split01)      at ($(t1.north west) + 0.5*(r.south)$);
                \coordinate (split01end)   at ($(t1.north east) + 0.5*(r.south)$) ;
				\draw (split01) -- (split01end);

                \draw[dashed] ([yshift=0.2cm, xshift=+0.3cm]t1.north) to[bend left = 40] node[midway, above]{\huge CART splits} ([yshift=0.2cm]r.north);
                \draw[dashed] ([yshift=0.2cm]t.north) to[bend left = 40]  node[midway, above]{\huge Random split}([yshift=0.2cm,  xshift=-0.3cm]t1.north);

                \node at (t.center) {\huge $t$};
                \node at ($0.5*(splitpoint) + 0.5*(splitpoint02)$) {\huge $t_{11}$};
                \node at ($0.5*(splitpointend) + 0.5*(splitpoint02)$) {\huge $t_{12}$};
                \node at ($0.5*(splitpoint) + 0.5*(splitpoint03)$) {\huge $t_{21}$};
                \node at ($0.5*(splitpointend) + 0.5*(splitpoint03)$) {\huge $t_{22}$};

		\end{tikzpicture}
 }

  
	\captionof{figure}{Illustration of the procedure used by RSRF for splitting a cell $t$ into $t_{11},t_{12},t_{21}, t_{22}$.}
 \label{figure:random-cart}
\end{minipage}
\end{figure}

We introduce the algorithm coined Random Split Random Forests. It depends on a width parameter $W\in\mathbb{N}$. Figures \ref{fig:RSRF} and \ref{figure:random-cart} serve as an illustration for \rsrf. For a rectangular set $t\subseteq[0,1]^d$, denote by $t^{(j)}$ the $j$-th side of $t$.\begin{definition}\label[definition]{def:algo_theory}
        Let $x_i \in [0,1]^d, y_i \in\R$, $i=1,\dots,n$, be given and let $k\in 2\mathbb{N}$. Starting with $T_0=\{\rootcell\}$ with $\rootcell = [0,1]^d$, for $m=0,2,4,\dots, k-2$ apply the following steps to all current leaf nodes $t \in T_{m}$.
        \begin{enumerate}
            \item Draw $\width$ many pairs $(j^w,c^w)\in \{1,\dots,d\} \times t^{(j)}$ uniformly at random.
            \item For each $w=1,\dots,W$ split $t$ at $(j^w,c^w)$ into $t^w_{1}$ and $t^w_{2}$ and then split $t^w_{1}$ and $t^w_{2}$ according to the Sample-CART criterion in \Cref{def:sample-cart}. This gives a partition of $t$ into $P^w = \{t^{w}_{1,1},t^{w}_{1,2},t^{w}_{2,1},t^{w}_{2,2} \}$
            \item Choose the splits with index $w_{\text{best}} \in \underset{w=1,\dots,W}{\arg\max}\ \widehat{\zwei}\left( t; P^w \right)$.
            \item Add $t_{j,l}^{w_{\text{best}}}$ to $T_{m+2}$ for $j,l\in\{1,2\}$.
        \end{enumerate}
        Set $\hat{T}:=T_{k}$.
    \end{definition}
    In this section, we denote by $\hat{m}_{\hat{T}}$ the tree estimator \eqref{eq:general_tree_estimator}, corresponding to the \rsrf\ algorithm from \Cref{def:algo_theory}. Due to the randomness induced, such trees may be aggregated to an ensemble of trees, following the same principles as Random Forests.
\subsection{Consistency for RSRF is Valid for a Larger Function Class}\label{sec:extending_function_class}
The tree growing procedure in \rsrf\ differs from Random Forests through the random splits and evaluation of splits using the impurity decrease $\zwei$ with $L=4$. 
We shall now discuss how this enlarges the class of functions for which consistency holds, compared to existing consistency results for regression trees and Random Forests.\\
Recall the definition of impurity decrease from \Cref{def:impurity_decrease}. We say that a cell $t$ is split (into $t_1$ and $t_2$) at a split point $\bc = (j,c)\in\{1,\dots, d\}\times [0,1]$, if $t_1 = \{x \in t: x_j \leq c\}$ and $t_2 = t \setminus t_1$. The following convenient notation is best described in words (see \Cref{def:cut-notation}\label{def-in-words:cut-notation} for a formal definition). We denote by $\zwei_t\left( \bc | \bc_1,\bc_2 \right)$ the impurity decrease for the partition that is obtained from $t$ by splitting it at a split point $\bc =(j,c) \in \{1,\dots, d\} \times [0,1]$, after which the resulting two daughter cells are each split again at some split points $\bc_1$ and $\bc_2$. We set its value to $-\infty$ in case that one of the splits is placed outside of the cell.
\Cref{table:summary_impurity_decrease_notation} summarizes all the notation used for impurity decrease.\\
     For RSRF, we introduce a simplified version of \Cref{cond:sid-general}, where we omit the technicality $\delta \geq 1-L^{-1}$. It will help in the proofs to come. 
    \begin{table}[t]\renewcommand\arraystretch{1.3}
    \centering
    \begin{tabular}{p{2cm}|p{10cm}}
        $\zwei(t;P)$ & Impurity decrease given a partition $P$ of $t$. See Definition \ref{def:impurity_decrease}.  \\
        \hline
        $\zwei_t( \bc )$ & Impurity decrease for the partition of $t$ obtained through a single rectangular cut at $\bc =(j,c) \in \{1,\dots,d\}\times t^{(j)}$. See Definition \ref{def:cut-notation_1step}. \\
        \hline
        $\zwei_t(\bc|\bc_1,\bc_2) $ & Impurity decrease when splitting $t$ first at $\bc$, and then splitting its daughters at $\bc_1$ and $\bc_2$. See Definition \ref{def:cut-notation}.
    \end{tabular}
    \caption{Notations related to impurity decrease.}
    \label{table:summary_impurity_decrease_notation}
\end{table}

\begin{condition}{(C1-\rsrf)}[Probabilistic SID for \rsrf]\label{cond:C1}
    There exist $\alpha_1 \geq 1$, $\delta \in (0,1]$ such that for any rectangular cell $t =\bigtimes_{j=1}^d t^{(j)}$ we have:
	\begin{align}\label{eq:C1-good}
		\var( m(X) | X\in t) \leq \alpha_1 \sup_{\bc_1, \bc_2} \zwei_t \left( \bc | \bc_1, \bc_2  \right)  \text{ with probability at least } \delta.
	\end{align}
	Here, the daughter cells of $t$ are obtained from splitting $t$ at $\bc = (\gamma,U)$ where $\gamma$ is a random variable with $\PP( \gamma = j ) = \frac{1}{d}$, $j \in \{1,\dots, d\}$ and, conditionally on $\gamma =j$, $U$ is uniform on $t^{(j)}$.
\end{condition}
Clearly, the condition mimics the splitting scheme used in RSRF for a single random split. Our strategy to compare Condition \ref{cond:sid-general} for RSRF to the SID condition of \citet{Chi} is as follows. In \Cref{lemma:C1-C1'} we show that Condition \ref{cond:C1} implies \Cref{cond:sid-general} for RSRF for large $W$. Then, it is sufficient to show that a particular regression function $m$ satisfies \Cref{cond:C1} while violating the SID condition from \citet{Chi}. Note that RSRF also satisfies Conditions \ref{cond:T_dim} and \ref{cond:T_boundary}. Thus, we obtain the following corollary.
   \begin{corollary}[Consistency for\ \rsrf]\label[corollary]{thm:main}
	Let $\hat{T}$ be the tree building procedure from \Cref{def:algo_theory}. Assume \Cref{cond:C1} holds with some $\delta>0$ as well as Conditions \ref{cond:C2}, \ref{cond:C3}, \ref{cond:C4}. Assume that the width parameter $W$ is chosen such that $W  \geq \frac{-2\log(2)}{\log (1-\delta)}$.
        Then, for each sequence $k = k_n$ with $k\to \infty$ and $k < \frac{1}{4} \log_2( n) $, it holds:
		\begin{align*}
   			\lim_{n\to\infty} \E\Big[ \big(m(X) -  \hat{m}_{\hat{T}} (X) \big)^2 \Big] = 0.
		\end{align*}
\end{corollary}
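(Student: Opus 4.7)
The plan is to deduce the corollary from \Cref{thm:main3} applied to the RSRF algorithm of \Cref{def:algo_theory}. Since Conditions \ref{cond:C2}, \ref{cond:C3}, \ref{cond:C4} are assumed, three items remain to verify: the probabilistic SID \Cref{cond:sid-general} with $\delta' \geq 1 - L^{-1} = 3/4$ (recall $L = 4$ here), and the geometric Conditions \ref{cond:T_dim} and \ref{cond:T_boundary}. Once these are in place, \Cref{thm:main3} yields a constant $c > 0$ such that consistency holds for any $k_n \to \infty$ with $k_n < c \log n$, and the bound $k_n < \tfrac{1}{4}\log_2 n$ in the statement is one admissible choice within that range, compatible with the fact that a depth-$k_n$ RSRF tree carries $2^{k_n}$ leaves.

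The main obstacle is the upgrade from Condition \ref{cond:C1} to the general \Cref{cond:sid-general}. Interpreting RSRF within the general-tree-estimator framework, the randomized collection $\mathcal{D}(t)$ for a rectangular $t$ consists of all partitions obtainable by a random first split $\bc^w = (\gamma^w, U^w)$ for some $w \in \{1,\dots,W\}$ followed by any two rectangular cuts $\bc_1, \bc_2$ on the two daughters; the Sample-CART selection in step 2 of \Cref{def:algo_theory} corresponds exactly to the empirical-maximisation step prescribed by \Cref{def:general_tree_algorithm}. Consequently, at the population level,
\[
\sup_{P \in \mathcal{D}(t)} \zwei(t;P) \;=\; \max_{w = 1, \ldots, W} \; \sup_{\bc_1, \bc_2} \; \zwei_t\bigl(\bc^w \mid \bc_1, \bc_2\bigr).
\]
The $W$ draws $(\gamma^w, U^w)$ are i.i.d.\ and independent of the data, so applying Condition \ref{cond:C1} independently across $w$ gives
\[
\PP\Bigl( \var(m(X) \mid X \in t) > \alpha_1 \sup_{P \in \mathcal{D}(t)} \zwei(t;P) \Bigr) \;\leq\; (1 - \delta)^W.
\]
The hypothesis $W \geq -2\log 2 / \log(1-\delta)$ is exactly $(1-\delta)^W \leq 1/4$, so the success probability is at least $3/4 = 1 - L^{-1}$, establishing \Cref{cond:sid-general} with $\delta' = 3/4$.

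For the geometric conditions, each cell in RSRF is a rectangle produced by at most three axis-aligned cuts per iteration. For \Cref{cond:T_dim}, the number of distinct subcells of $t$ reachable in one iteration, viewed through the midpoint set $G_n^{\#}$, is bounded by $[d(g_n+1)]^3$, which is polynomial in $n$ since $d = O(n^{K_0})$ and $g_n = \lceil n^{1+\epsilon}\rceil$. For \Cref{cond:T_boundary}, take $\mathcal{H}$ to be the family of unions of at most three grid stripes $G(j,q)$: each such union contains at most $3 g_n^{d-1}$ boxes, $\#\mathcal{H}$ is polynomial in $n$, and for every $t' \in \mathcal{C}_1(t)$ the discrepancy $\rho(t,t')$ is contained in the at most three stripes that capture the new hyperplanes separating $t'$ from $t$.

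Combining these verifications with Conditions \ref{cond:C2}--\ref{cond:C4}, \Cref{thm:main3} applies and yields $\E[(m(X) - \hat m_{\hat T}(X))^2] \to 0$ as claimed. The only delicate step is the probabilistic SID upgrade above, which trades independence of the $W$ random splits for the enlarged success probability $1 - L^{-1}$ required by \Cref{cond:sid-general}; the remaining geometric bookkeeping is parallel to the CART example treated earlier in the paper.
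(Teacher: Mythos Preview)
Your argument is correct in spirit and matches the way the paper \emph{presents} \Cref{thm:main} (namely as a corollary of \Cref{thm:main3} once one checks \ref{cond:sid-general}, \ref{cond:T_dim}, \ref{cond:T_boundary}). Your SID upgrade is exactly \Cref{lemma:C1-C1'}, and your verification of the tree conditions is fine (each grandchild $t'\in\mathcal C_1(t)$ is cut out by at most two new axis-aligned hyperplanes, so two stripes already suffice; your three is just a harmless overcount).

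That said, the paper's \emph{actual} proof runs in the opposite direction: it establishes \Cref{thm:main} directly via the bias/variance decomposition (\Cref{lemma:error1,lemma:error2}, together with \Cref{thm3} and \Cref{thm:C5-fulfilled}), and then obtains \Cref{thm:main3} by adapting those arguments. So your route is the reverse of the paper's, though both are legitimate.

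There is one genuine gap in your deduction. \Cref{thm:main3} only asserts that \emph{some} $c>0$ works, whereas \Cref{thm:main} claims consistency specifically for $k_n<\tfrac14\log_2 n$. You write that this is ``one admissible choice within that range,'' but nothing in the black-box statement of \Cref{thm:main3} guarantees its $c$ is at least $1/(4\log 2)$ (or, after converting RSRF depth to general-tree depth via $k_{\text{general}}=k_n/2$, at least $1/(8\log 2)$). To close this you must either trace the constant through the proof of \Cref{thm:main3} (which is exactly what the paper's direct argument via \Cref{lemma:error2} with $c\in(0,\tfrac14)$ accomplishes), or weaken the stated range on $k_n$. The remark about $2^{k_n}$ leaves does not substitute for this.
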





We rephrase the SID condition from \citet{Chi}. Let us write $\zwei_t(\bc)$ for the corresponding impurity decrease $\zweiarg{2}(t;t_1,t_2)$ when using the single split $\bc =(j,c) \in \{1,\dots,d\}\times [0,1]$ (we set $\zwei_t(\bc)$ to $-\infty$ in case of splits outside of the cell $t$).\label{def-in-words:cut-notation_1step} See also \Cref{def:cut-notation_1step}. 
\begin{definition}[SID condition for CART; \citeauthor{Chi}, \citeyear{Chi}]\label[definition]{def:C1-yingying}
    There exists some $\alpha_1\geq 1$ such that for each cell $t$, $\var( m(X) | X \in  t) \leq \alpha_{1} \sup_{\bc}\zwei_t(\bc )$. Here, the supremum is over $\bc \in \{1,\dots, d\} \times [0,1]$.
	\end{definition}
Consider the function $m:[0,1]^3 \to \R$, $m(x_1,x_2,x_3) = (x_1-0.5)(x_2-0.5)+x_3$, see also \Cref{fig:example_function}. As mentioned by \citet{Chi}, this does not satisfy the above SID condition which follows from the first part of \Cref{prop:example}. Importantly, the probabilistic SID Condition \ref{cond:C1} is fulfilled.
 \begin{figure}[h]
    \centering
       \includegraphics[scale=0.25]{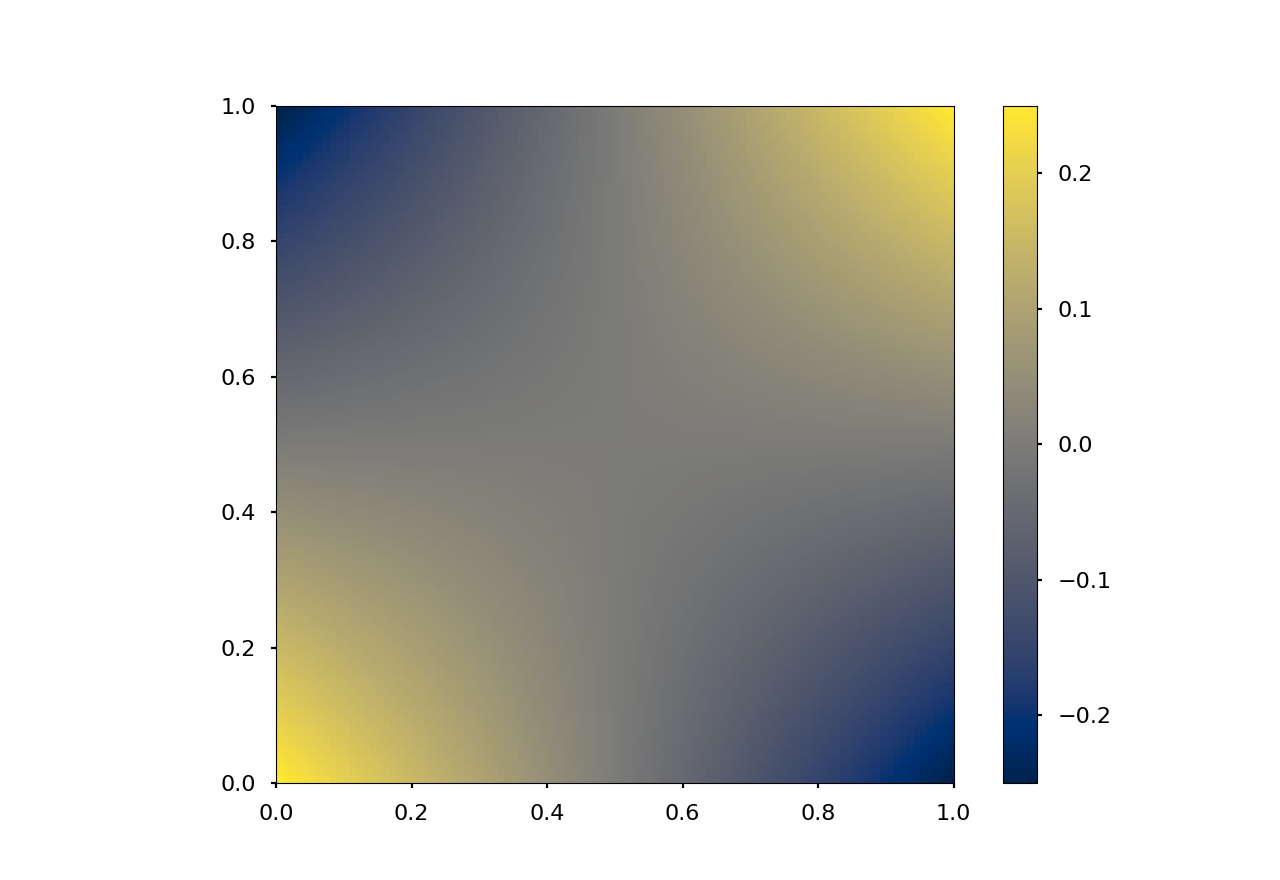}
       \caption{Plot of the function $g: [0,1]^2 \to \R$, $g(x_1,x_2) = (x_1-0.5)(x_2-0.5)$.}\label{fig:example_function}
    \end{figure}


\begin{proposition}\label[proposition]{prop:example}
Suppose $X$ is uniformly distributed on $[0,1]^3$ and let $m(x_1,x_2,x_3) = (x_1-0.5)(x_2-0.5)+x_3$. We have the following two statements.
\begin{enumerate}
    \item\label{itemprop:exSID} Let $0 \leq c_1 < c_2 \leq 1$ and $t = [0,1]^2\times [c_1,c_2]$. Then, $\var(m(X) | X \in t) \geq \frac{1}{144}$ and $\sup_{\bc}\zwei_t(\bc) = \frac{c_1+c_2}{2} \to 0$ as $c_1,c_2 \to 0$.
    \item\label{itemprop:exPSID} For any $\delta<\frac{2}{3}$ there exists a constant $\alpha_1=\alpha_1(\delta)$ such that \Cref{cond:C1} is satisfied.
\end{enumerate}
\end{proposition}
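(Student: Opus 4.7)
For Part 1, I would first exploit independence of $X_1,X_2,X_3$ under the uniform law on $t=[0,1]^2\times[c_1,c_2]$ together with $E[X_1-0.5]=E[X_2-0.5]=0$ to get the closed form $\var(m(X)|X\in t)=\var((X_1-0.5)(X_2-0.5))+\var(X_3|X_3\in[c_1,c_2])=1/144+(c_2-c_1)^2/12\geq 1/144$. For a single rectangular cut $\bc=(j,c)$: if $j\in\{1,2\}$, independence plus $E[X_{3-j}-0.5]=0$ forces both daughter-conditional expectations to equal $\mu(t)$, so $\zwei_t(\bc)=0$; if $j=3$ and $c\in(c_1,c_2)$, a short calculation gives $\zwei_t(\bc)=(c-c_1)(c_2-c)/4$, whose supremum is $(c_2-c_1)^2/16$. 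This tends to $0$ as $c_1,c_2\to 0$ while $\var$ stays bounded below, precluding any finite $\alpha_1$ in \Cref{def:C1-yingying}.

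For Part 2, I would first generalize the variance calculation: on any rectangular $t=\bigtimes_{j=1}^3[a_j,b_j]$ with $\ell_j:=b_j-a_j$, $\mu_j:=(a_j+b_j)/2-0.5$, $\sigma_j^2:=\ell_j^2/12$ and $m_j:=(a_j+b_j)/2$, the same argument gives $\var(m(X)|X\in t)=\sigma_1^2\sigma_2^2+\sigma_1^2\mu_2^2+\mu_1^2\sigma_2^2+\sigma_3^2$. Given $\delta<2/3$, pick $\eta\in(0,1/2)$ with $\tfrac{2}{3}(1-2\eta)\geq \delta$ and set $\alpha_1:=8/(9\eta(1-\eta))$. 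Consider the event $E:=\{\gamma\in\{1,2\},\,U\in[a_\gamma+\eta\ell_\gamma,\,b_\gamma-\eta\ell_\gamma]\}$, which satisfies $\PP(E)=\tfrac{2}{3}(1-2\eta)\geq \delta$. I claim that on $E$ the inequality in \Cref{cond:C1} holds with this $\alpha_1$, which proves the proposition.

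By symmetry assume $\gamma=1$, and set $p:=(U-a_1)/\ell_1\in[\eta,1-\eta]$, so the first cut produces daughters $t_L,t_R$ with $\PP(X\in t_L|X\in t)=p$. I lower bound $\sup_{\bc_1,\bc_2}\zwei_t(\bc\mid\bc_1,\bc_2)$ by evaluating at two candidates, \emph{Strategy~A}: $\bc_1=\bc_2=(2,m_2)$, and \emph{Strategy~B}: $\bc_1=\bc_2=(3,m_3)$. The tower identity
\[
\zwei_t(\bc\mid\bc_1,\bc_2)=\zwei_t(\bc)+p\,\zwei_{t_L}(\bc_1)+(1-p)\,\zwei_{t_R}(\bc_2),
\]
obtained by iterating the law of total variance, reduces each strategy to three single-split impurity decreases, each computable from the closed form above applied to the relevant sub-cell. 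Using $(\mu_1^L-\mu_1^R)^2=\ell_1^2/4=3\sigma_1^2$ for the $(1,U)$ split and $(\mu_j^L-\mu_j^R)^2=\ell_j^2/4=3\sigma_j^2$ for the midpoint splits, together with $pA^2+(1-p)B^2=\mu_1^2+3p(1-p)\sigma_1^2$ (where $A=\mu_1^L$, $B=\mu_1^R$), straightforward algebra yields
\[
\zwei_A=\tfrac{9p(1-p)}{4}\sigma_1^2\sigma_2^2+3p(1-p)\sigma_1^2\mu_2^2+\tfrac{3}{4}\mu_1^2\sigma_2^2,\qquad \zwei_B=3p(1-p)\sigma_1^2\mu_2^2+\tfrac{3}{4}\sigma_3^2.
\]
A termwise comparison of $\tfrac{1}{2}(\zwei_A+\zwei_B)$ with $\var$ shows that the smallest of the four coefficient ratios is $\tfrac{9p(1-p)}{8}$ (attained on the $\sigma_1^2\sigma_2^2$ term, since $p(1-p)\leq 1/4$), hence $\sup_{\bc_1,\bc_2}\zwei_t(\bc\mid\bc_1,\bc_2)\geq \max(\zwei_A,\zwei_B)\geq \tfrac{9p(1-p)}{8}\var\geq \tfrac{9\eta(1-\eta)}{8}\var$, which gives \Cref{cond:C1} with the claimed $\alpha_1$.

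The main obstacle is to recognize that \emph{neither strategy alone} works uniformly in $t$: Strategy~A captures the pure interaction $\sigma_1^2\sigma_2^2$ but misses $\sigma_3^2$, whereas Strategy~B captures $\sigma_3^2$ but misses the interaction, so only the supremum over $(\bc_1,\bc_2)$ in \Cref{cond:C1} lets both enter. This also explains why the threshold $\delta<2/3$ is structural: on $\{\gamma=3\}$ the two follow-up cuts cannot simultaneously resolve the $x_1$-$x_2$ interaction and the $x_3$-variation, so one must condition on $\{\gamma\in\{1,2\}\}$, and the buffer $\eta\ell_\gamma$ on $U$ keeps $p(1-p)$ bounded away from $0$.
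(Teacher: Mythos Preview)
Your proof is correct. Part~1 is essentially the same calculation as in the paper (both reduce to closed-form moment formulas for the uniform distribution); your value $\sup_\bc\zwei_t(\bc)=(c_2-c_1)^2/16$ is in fact what the paper's own Lemma~\ref{lemma:stuff} yields, and the stated $(c_1+c_2)/2$ in the proposition appears to be a typo there---either way the quantity vanishes as $c_1,c_2\to 0$ while the variance stays bounded below.

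For Part~2 your route is genuinely different and considerably cleaner. The paper parametrizes cells as $[0.5-l_1,0.5+l_2]\times[0.5-m_1,0.5+m_2]\times[c_1,c_2]$, fixes a threshold $\tau<\tfrac12$, and splits into four cases according to whether $l_1/l_2$ and $m_1/m_2$ lie above or below $1-\tau$; only the ``centered'' case~(a) actually exploits the random-then-CART structure, while cases~(b)--(d) fall back on the monotonicity Lemma~\ref{lemma:2 step > 1 step} and require bounding several rational functions in the cell parameters. You bypass all of this by working in the natural moment coordinates $(\mu_j,\sigma_j^2)$, writing down the exact values of $\zwei$ for the two concrete second-stage choices $(2,m_2)$ and $(3,m_3)$, and observing that $\max(\zwei_A,\zwei_B)\ge\tfrac12(\zwei_A+\zwei_B)$ already dominates $\tfrac{9p(1-p)}{8}\var$ termwise. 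The price is a slightly cruder constant (your $\alpha_1=8/(9\eta(1-\eta))$ versus the paper's case-dependent bounds), but the argument is uniform over all rectangles without any case distinction, and it makes transparent why neither strategy alone suffices. The symmetry reduction to $\gamma=1$ is legitimate because $m$ is symmetric in $(x_1,x_2)$, so for $\gamma=2$ one simply swaps the roles of the first two coordinates in Strategy~A.
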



The proof of \Cref{prop:example} can be found in \Cref{sec:proof_example}. The main difference in assertion \ref{itemprop:exPSID} compared to \ref{itemprop:exSID} is that any \quot{symmetric} cell of the form $t = [0.5-l,0.5+l] \times [0.5-m, 0.5+m] \times [c_1,c_2]$ relates to large impurity decrease when $t$ is split into four cells both at coordinates $1$ and $2$. More precisely, suppose $t_1$ and $t_2$ are obtained by splitting $t$ at a point $0.5+a$, $a\in [-l,l]$ in the first coordinate. Then, split both daughter cells using a (theoretical) CART split at the second coordinate. By \Cref{remark:2step-1step},
\begin{align*}
    S:= &\sup_{\bc_1, \bc_2} \zwei_t \left( (1,0.5+\alpha)| \bc_1,\bc_2\right) \\
    &\geq 
     \PP(X\in t_1|X\in t) \sup_{c \in [0,1] }\zwei_{t_1}\left( (2,c) \right) \\
     &\quad + \PP(X\in t_2|X\in t) \sup_{c \in [0,1] }\zwei_{t_2}\left(  (2,c) \right) + \zwei(t; t_1, t_2).
\end{align*}
It can be checked that the last summand on the right side equals zero. Calculations then reveal that for $a = \kappa \cdot l$ with $\kappa \in (-1,1)$ we have 
\begin{equation}\label{eq:example_inequality_symmetric}
    \var(m(X)|X\in t)\leq \frac{16}{9(1-\kappa^2)}\times S.
\end{equation}
The above provides an intuition for the proof of \Cref{prop:example} \ref{itemprop:exPSID}. In the proof, this is generalized in order to provide such a bound for arbitrary cells. We emphasize that, above, $\kappa$ relates to the probability $\delta$ in the definition of our Condition \ref{cond:C1}. It is intuitive that, for larger values of $\delta$ we may need a larger $\alpha_1$ for \ref{cond:C1} to hold. In the present example, this connection is clear from \eqref{eq:example_inequality_symmetric}.\\
The proofs of the statements from \Cref{prop:example} which motivate the greater generality of our probabilistic SID condition along an exemplary model, are rather complex. We believe that the proof ideas can be transferred to other models.\\
Finally, let us note that the condition from \Cref{def:C1-yingying} indeed implies our Condition \ref{cond:C1}. This follows easily from \Cref{remark:2step-1step} in the appendix. To wrap this up, our result from \Cref{thm:main} is valid for a strictly larger class of regression functions than the consistency result of \citet{Chi} for CART trees.


\section{Conclusion and Outlook}\label{sec:outlook}
We developed theoretical guarantees for a general class of tree estimators, by extending recent developments from \citet{Chi} on Random Forest consistency. By our new probabilistic SID condition, we account for the usage of additional randomness in the tree building - which is common in many tree based algorithms such as Extremely Randomized Trees. In addition, for the \rsrf\ algorithm, we argued that the class of regression functions covered by our general result is strictly larger than the ones covered by recent results on Random Forests. This was proven by considering a regression function with a so-called \emph{pure interaction} between two covariates. Such pure interactions are hard to detect for algorithms using CART because there is no one-dimensional marginal effect to guide to the pure interaction effect \citep[compare][]{wright2016}). However, it remains an open problem to decide whether or not Random Forest (using the CART criterion) is inconsistent in pure interaction scenarios. In future work, we intend to further investigate the disparity between Random Forests and related modifications such as \rsrf\ in a simulation study.



\newpage

\appendix
\section{Structure of the Appendix}
The supplement is structured as follows. In \Cref{sec:preparation_proof}, we provide an overview of the proof of the main consistency results from \Cref{thm:main3} and \Cref{thm:main}. Furthermore, necessary notations are introduced. In \Cref{subsec:proof_lemma_error1,subsec:C1-related,subsec:proof_C5-fulfilled_thm3,subsec:proof-main3}, the remaining details to the proofs concerning \Cref{thm:main} and \Cref{thm:main3} can be found. \Cref{sec:proof_example} contains the proofs to \Cref{prop:example} from \Cref{sec:extending_function_class} in the main text. Finally, in \Cref{subsec:proof_oblique} we check Conditions \ref{cond:T_dim} and \ref{cond:T_boundary} for Oblique Trees and Interaction Forests (see \Cref{ex:interaction_forests,ex:oblique} in the main text). \Cref{sec:split_determining_sequences} contains a technical definition needed in the proofs. 
\section{Preparation and Sketch of the Proof for \Cref{thm:main3} and \Cref{thm:main}}\label{sec:preparation_proof}
For ease of notation we provide a detailed direct proof for \Cref{thm:main}. The proof for \Cref{thm:main3} is widely analogous and we refer to \Cref{subsec:proof-main3} for details on the necessary adaptions.\\
In the proof, we make use of the methods developed by \citet{Chi}. The essence in our theory is that the impurity gain \eqref{eq:C1-good} is only assumed to hold with probability $\delta$, and not with probability $1$. In our theory, we essentially show that the arguments of \citet{Chi} can still be used when they are carefully and appropriately adjusted. 
 Apart from this, in the proofs, we try to stay close to the arguments of \citet{Chi}. Here, additional considerations become necessary in parts of the proofs where impurity gain conditions are used as difficulties arise when the impurity condition becomes probabilistic. In particular, this concerns the proof of \Cref{lemma:error1}, stated below in \Cref{subsec:notes_proof}, which together with \Cref{lemma:error2} implies the statement of \Cref{thm:main}. Whereas \Cref{lemma:error2} immediately follows from \citet{Chi}, the proof of \Cref{lemma:error1} which gives a bound on the \quot{bias term} needs essentially new arguments. For the proof of 
\Cref{lemma:error1} we introduce two properties \ref{cond:C5'} and \ref{cond:C6'} which are shown to hold for \rsrf. For the proof of  \ref{cond:C5'} one can follow lines in the argumentation of \citet{Chi}, see \Cref{thm:C5-fulfilled}. But the proof of \ref{cond:C6'} needs more care and new arguments, see Lemmas \ref{lemma:C1-C1'} -\ref{lemma:C1impliesC6}. Given the proof of \ref{cond:C5'} and \ref{cond:C6'} one uses 
\Cref{thm3} to get \Cref{lemma:error1}. \Cref{thm3} is related to Theorem 3 of \citet{Chi} but the proof differs because now the new properties \ref{cond:C5'} and \ref{cond:C6'} are used with their now probabilistic formulation.

\subsection{Tree Notation and Partitions}\label{sec:RF}
In this section, we introduce notation required in the remainder of the paper. Furthermore, properties related to the impurity decrease are collected. 
    
 \subsubsection{Tree Notation}\label{sec:treenotation}
We introduce some notation for binary trees. This notation will be used in the proof of \Cref{thm:main}. Given a cell $t \subseteq[0,1]^d$, we use the notation $t_1$ and $t_2$, respectively, for the left and right daughter cell of $t$, in particular we have $t=t_1 \cup t_2$. Furthermore, we write $t_{1,1}$ and $t_{1,2}$ for the daughter cells of $t_1$, resp. $t_{2,1}$ and $t_{2,2}$ for the daughther cells of $t_{2}$. Then, $\{t_{1,1}, t_{1,2}, t_{2,1}, t_{2,2} \}$ forms a partition of $t$. This notation extends to $t_a = t_{a_1 \dots a_k}$	where $a = (a_1, \dots, a_k) \in \{1,2\}^k$ thus uniquely determines a path in a binary tree of depth $k$ starting at some (root) cell $t$ to one of the tree leafs $t_{a}$. To cover the case $k=0$, we use the convention $t_{\emptyset}=t$.\\ \ \\
	Given a binary tree and some cell at depth $l$, we may also want to identify this cell with the sequence of cells leading to it.
 That is, for a cell $t_{a}$ with given $a \in \{1,2\}^l$, let $\bt_{a}  :=  (t, t_{a_1} , t_{a_1a_2}, \dots, t_{a_1 \dots a_{l}}) $
be the tree branch corresponding to $a$. Here, $t$ is the root of the tree. 
    A binary tree $T$ of depth $k$ can thus be understood as the set of paths of cells, i.e.
	\begin{align*}
		T = \big\lbrace \bt_{a}  \ : a \in \{1,2\}^k \big\rbrace.
	\end{align*}
 Given a tree $T$ we denote by $\leafs{T}$ the set of all leafs of $T$. \Cref{fig:illustration_tre_notation} illustrates our notation.
     \begin{figure}\centering
         \begin{tikzpicture}[level distance=1.5cm,
    level 1/.style={sibling distance=3.5cm},
    level 2/.style={sibling distance=1.5cm},
    bold/.style={rectangle,solid,ultra thick} ]
\tikzstyle{every node}=[rectangle,draw]

    \node[bold] (Root) {$t$}
    child {
        node[bold] {$t_1$} 
        child { node {$t_{11}$} }
        child { node[bold] {$t_{12}$} }
    }
    child {
        node {$t_2$}
        child { node {$t_{21}$} }
        child { node {$t_{22}$} }
    };
\end{tikzpicture}
         \caption{Illustration of a tree of depth $2$ with root $t$. The boldface nodes form the branch $\bt_{12} = (t,t_1,t_{12})$. The leafs are given by $\leafs{T} = \{ t_{11},t_{12},t_{21}, t_{22} \}$.}
         \label{fig:illustration_tre_notation}
     \end{figure}
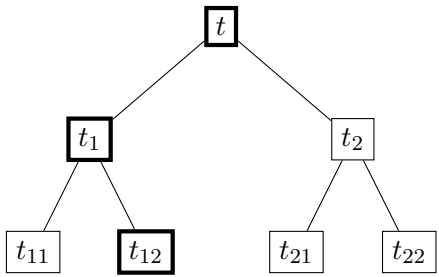
     Let $t$ be a cell partitioned into two cells $t_1$ and $t_2$. We occasionally use the bullet-notation $t_{\bullet}$ for the partition $\{t_{1}, t_{2}\}$. Similarly, if $t$ is instead partitioned into cells $t_{1,1},t_{1,2},t_{2,1},t_{2,2}$, we write $t_{\bullet \bullet}$ for $\{t_{a}:a \in \{1,2\}^2\}$.

\subsubsection{Further Notes on the Impurity Decrease}
Below, we list some results which will be of use in the proofs. Suppose a cell $t$ is split into two daugther cells which are then partitioned into grand-daughter cells. Refering to the corresponding decrease as \quot{$2$-step} impurity decrease, \Cref{remark:2step-1step} describes the $2-$step impurity decrease for the resulting partition in terms of the impurity decreases between $t$ and its daughters, and between its daughters and grand-daughters. Below, for clarification, we write $\zweiargneu{L}$ instead of $\zwei$ for the decrease associated to a partition of size $L$. 
	\begin{proposition}\label[proposition]{remark:2step-1step}
		 Let $t$ be a cell, $t_1, t_2$ daughter cells and $t_{j,1}, t_{j,2} \subseteq t_{j}$, $j = 1,2$ its grand-daughter cells. It holds that
		\begin{align*}
			\zweiargneu{4} \left( t; t_{\pp} \right) = \PP(X\in t_1| X \in t )\zweiargneu{2}\left( t_1;t_{1\bullet} \right) + \PP(X \in t_2 | X \in t )\zweiargneu{2}\left( t_2;  t_{2\bullet} \right) + \zweiargneu{2}\left( t; t_\bullet \right).
		\end{align*}
	\end{proposition}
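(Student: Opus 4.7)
\textbf{Proof plan for Proposition \ref{remark:2step-1step}.}

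My plan is a direct algebraic manipulation based on the decomposition $\mu(t_{j,l}) - \mu(t) = [\mu(t_{j,l}) - \mu(t_j)] + [\mu(t_j) - \mu(t)]$. Starting from the definition
\[
\zweiargneu{4}(t; t_{\pp}) = \sum_{j=1}^2\sum_{l=1}^2 \PP(X \in t_{j,l} \mid X\in t)\,[\mu(t_{j,l}) - \mu(t)]^2,
\]
I would substitute the above identity into the square and expand into three sums. The first sum collects the terms $[\mu(t_{j,l}) - \mu(t_j)]^2$, the third sum collects $[\mu(t_j) - \mu(t)]^2$, and the middle sum is the cross term. I would also use the elementary chain rule $\PP(X \in t_{j,l} \mid X \in t) = \PP(X \in t_{j,l} \mid X \in t_j)\,\PP(X \in t_j \mid X \in t)$, which is valid since $t_{j,l} \subseteq t_j \subseteq t$.

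For the first sum, applying the chain rule and summing over $l$ inside the factor $\PP(X\in t_j\mid X\in t)$ produces exactly $\sum_j \PP(X\in t_j\mid X\in t)\,\zweiargneu{2}(t_j; t_{j\bullet})$. For the third sum, the inner sum over $l$ of $\PP(X\in t_{j,l}\mid X\in t)$ collapses to $\PP(X\in t_j\mid X\in t)$, yielding exactly $\zweiargneu{2}(t; t_\bullet)$.

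The remaining task is to show that the cross term vanishes. Using the chain rule, the cross term factors as
\[
2\sum_{j=1}^2 \PP(X\in t_j\mid X\in t)\,[\mu(t_j) - \mu(t)]\sum_{l=1}^2 \PP(X\in t_{j,l}\mid X\in t_j)\,[\mu(t_{j,l}) - \mu(t_j)].
\]
The inner sum over $l$ is zero: by the tower property for conditional expectations, $\sum_l \PP(X\in t_{j,l}\mid X\in t_j)\,\mu(t_{j,l}) = \E[m(X)\mid X\in t_j] = \mu(t_j)$, so the bracketed quantity cancels. Combining the three contributions gives the claimed identity.

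There is no serious obstacle: the entire argument is the standard variance decomposition (law of total variance applied to the nested partition), rewritten with explicit conditional probabilities. The only thing to be slightly careful about is handling degenerate cases where $\PP(X\in t_j \mid X \in t) = 0$ (and hence $\mu(t_j)$ is ill-defined), but these can be absorbed into the convention that any term with zero conditional probability contributes zero to $\zwei$.
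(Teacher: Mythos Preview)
Your proposal is correct and follows essentially the same approach as the paper: both start from the definition of $\zweiargneu{4}(t;t_{\pp})$, insert the telescoping identity $\mu(t_{j,l})-\mu(t)=[\mu(t_{j,l})-\mu(t_j)]+[\mu(t_j)-\mu(t)]$, expand the square, use the chain rule $\PP(X\in t_{j,l}\mid X\in t)=\PP(X\in t_{j,l}\mid X\in t_j)\PP(X\in t_j\mid X\in t)$, and verify that the cross term vanishes via the tower identity $\sum_l \PP(X\in t_{j,l}\mid X\in t_j)\mu(t_{j,l})=\mu(t_j)$. The only difference is organizational (you group into three sums up front, the paper expands termwise first), and your remark on degenerate cells is a welcome addition.
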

		\begin{proofname}{Proof of \Cref{remark:2step-1step}}\label{remark:2step-1step-proof}
	We write $\PP(t_1|t) := \PP(X\in t_1 | X \in t)$. Recall that $\mu(t_1) = \E(m(X) | X \in t_1)$.
		It holds that
		\begin{align}\label{eq:II}
				\zwei \left( t; t_{\pp} \right)  = \sum_{\substack{j=1,2 \\ k=1,2}} \PP(t_{j,k} | t) [\mu(t_{j,k}) - \mu(t)]^2.
		\end{align}
		For $k=1,2$ it holds that
		\begin{align*}
			\PP(t_{j,k} | t )[ \mu(t_{j,k}) - \mu(t) ]^2 &= \PP(t_{j,k} | t_j) \PP( t_j | t ) \big\lbrace [\mu(t_{j,k}) - \mu(t_j)] + [\mu(t_j) - \mu(t)] \big\rbrace^2 \\
			&= \PP(t_j | t)  \PP(t_{j,k}| t_j) [\mu(t_{j,k}) - \mu(t_j)]^2 \\ & \qquad + \PP(t_{j,k}|t_j) \PP(t_j|t) [\mu(t_j) - \mu(t)]^2 \\ &\qquad + 2 \PP(t_{j,k}|t_j)\PP(t_j| t) \big( \mu(t_{j,k}) - \mu(t_j) \big)\big(\mu(t_j) - \mu(t)\big)
		\end{align*}
		Fix $j \in \{1,2\}$. The two summands with index $(j,1)$ and $(j,2)$ in \eqref{eq:II} thus sum up to
		\begin{align}
			\notag \PP(t_{j,1} | t) &[\mu(t_{j,1}) - \mu(t)]^2 + \PP(t_{j,2} | t) [\mu(t_{j,2}) - \mu(t)]^2 \\
			&\notag =\PP(t_j| t) \zweiargneu{2}\left(t_j; t_{j,1}, t_{j,2} \right)  + \underbrace{\big[ \PP(t_{j,1}|t_j) + \PP(t_{j,2} | t_j) \big]}_{=1} \PP(t_j | t) \big\lbrace \mu(t_j) - \mu(t) \big\rbrace^2 + \\
			& \notag\qquad 2[\mu(t_j) - \mu(t)] \PP(t_j|t) \big\lbrace (\mu(t_{j,1}) - \mu(t_j) )\PP(t_{j,1} | t_j) + (\mu(t_{j,2}) - \mu(t_j) ) \PP(t_{j,2}| t_j) \big\rbrace \\
			&\label{eq:inter}=\PP(t_j|t) \zweiargneu{2}\left(t_j; t_{j,1}, t_{j,2} \right) + \PP(t_j|t) (\mu(t_j) - \mu(t))^2,
		\end{align}
		since
		\begin{align*}
			(\mu(t_{j,1}) - \mu(t_j) )&\PP(t_{j,1} | t_j) + (\mu(t_{j,2}) - \mu(t_j) ) \PP(t_{j,2}| t_j)\\
			&= \underbrace{\mu(t_{j,1}) \PP(t_{j,1} | t_j) + \mu(t_{j,2})\PP(t_{j,2}|t_j)}_{=\mu(t_j)} - \mu(t_j)(\underbrace{\PP(t_{j,1}|t_j) + \PP(t_{j,2}|t_j)}_{=1}) \\
			&= 0.
		\end{align*}
		Finally, we obtain the desired result by summation of \eqref{eq:inter} for $j=1$ and $j=2$.
	\end{proofname}
For a proof of the next result, see \citet[p.~273]{cartbook}. 
        \begin{proposition}
        The impurity decrease for a partition into two cells can also be calculated through the following relation. 
            \begin{align}\label{eq:1step-alternative}
			\zwei\left(t; t_1, t_2\right) = \PP(X \in t_1| X \in t)\PP(X \in t_2|X \in t) \big[ \mu(t_1) - \mu(t_2) \big]^2.
		\end{align}
        \end{proposition}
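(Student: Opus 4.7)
The plan is to verify the identity by a direct algebraic manipulation, leveraging the fact that $\{t_1, t_2\}$ is a partition of $t$ so that the conditional probabilities and conditional means of the daughters are related to those of the parent through the total-probability decomposition. Abbreviate $p_1 = \PP(X \in t_1 \mid X \in t)$ and $p_2 = \PP(X \in t_2 \mid X \in t)$, so that $p_1 + p_2 = 1$, and abbreviate $\mu_i = \mu(t_i)$, $\mu = \mu(t)$.

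First I would invoke the tower property $\mu = p_1 \mu_1 + p_2 \mu_2$, which rewrites the differences $\mu_1 - \mu$ and $\mu_2 - \mu$ as
\begin{align*}
\mu_1 - \mu &= \mu_1 - p_1 \mu_1 - p_2 \mu_2 = p_2 (\mu_1 - \mu_2), \\
\mu_2 - \mu &= -p_1 (\mu_1 - \mu_2).
\end{align*}
Substituting these into the definition
\[
\zwei(t; t_1, t_2) = p_1 (\mu_1 - \mu)^2 + p_2 (\mu_2 - \mu)^2
\]
yields $p_1 p_2^2 (\mu_1 - \mu_2)^2 + p_2 p_1^2 (\mu_1 - \mu_2)^2 = p_1 p_2 (p_1 + p_2)(\mu_1 - \mu_2)^2 = p_1 p_2 (\mu_1 - \mu_2)^2$, which is exactly the claimed identity.

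There is no real obstacle here; the only subtle point is recognizing that the identity $p_1 + p_2 = 1$ is needed both to eliminate $\mu$ and to collapse $p_1 p_2^2 + p_2 p_1^2$ into $p_1 p_2$. In particular, this computation implicitly uses that $t_1$ and $t_2$ partition $t$ (as assumed in the statement); the identity would not hold verbatim for an arbitrary pair of subsets. Since the derivation is a two-line algebraic exercise, I would present it as a short proof rather than invoke the external reference, though citing \citet[p.~273]{cartbook} for the same fact is sufficient.
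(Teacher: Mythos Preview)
Your proof is correct. The paper does not actually supply an argument for this proposition but simply defers to \citet[p.~273]{cartbook}; your direct computation via the tower identity $\mu = p_1\mu_1 + p_2\mu_2$ is the standard derivation and is more self-contained than what the paper provides.
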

        Below, we state a closed expression for the quantity $\eins$ defined in \Cref{def:impurity_decrease}. For the sake of completeness, we provide a proof.
\begin{remark}\label[remark]{remark:eins_plus_zwei}
Let $P=\{t_1,\dots, t_L\}$ be a partition of $t$. It holds that
     \begin{align*}
         \eins\left(t;P\right) 
        = \sum_{l=1}^L \PP( X \in t_l| X\in t)\var(m(X) | X \in t_l).
    \end{align*}
\end{remark}


\begin{proofname}{Proof of \Cref{remark:eins_plus_zwei}}
We need to show that the right hand side of the equation equals $\var(m(X)|X\in t) - \zwei(t;P)$. It holds that
    \begin{align*}
        \var(m (X)|X\in t)
        =  \sum_{l=1}^L \E\left[  \1_{(X \in t_l)} \left( m(X) - \E[m(X) | X\in t] \right)^2 \big |X \in t \right].
    \end{align*}
    Note that the conditional expectation inside the sum is equal to
    \begin{align}\label{eq:split_eins_plus_zwei}
    \begin{split}
        &\E \left[  \1_{(X \in t_l)} \left( m(X) - \E[m(X) | X\in t_l] \right)^2 \big| X \in t \right] \\
         + &\E\left[  \1_{(X\in t_l)} \left( \E[m(X) | X\in t_l] - \E[m(X) | X\in t] \right)^2 | X\in t \right] \\
           +&2\E\left[  \1_{(X\in t_l)}\left( m(X) - \E[m(X) | X\in t_l] \right) \left( \E[m(X)|X\in t_l] - \E[m(X)| X \in t] \right)  \big|X \in t \right]
    \end{split}
    \end{align}
    It holds the last summand in \eqref{eq:split_eins_plus_zwei} is zero, as
    \begin{align*}
        &\E\left[  \1_{(X\in t_l)}\left( m(X) - \E[m(X) | X\in t_l] \right) \big| X \in t \right]
       \\ & \hspace{2cm}= \PP(X\in t_l |X \in t) \E\left[ \left( m(X) - \E[m(X) | X\in t_l] \big| X \in t_l \right)\right]= 0.
    \end{align*}
    The first summand in \eqref{eq:split_eins_plus_zwei} equals $\PP(X\in t_l|X\in t)\var(m(X)| X\in t_l)$, and the second one equals $\PP(X\in t_l |X\in t)\left(\E[m(X)|X\in t_l] - \E[m(X)| X \in t]\right)^2$. The claim follows by definition of $\zwei(t;P)$.
\end{proofname}


\subsubsection{Formal Definition for the Cut Notations}
Below, we give formal definitions of $\zwei_{t}(\bc|\bc_1,\bc_2)$ introduced in \Cref{sec:consistency} (see pages \pageref{def-in-words:cut-notation} and \pageref{def-in-words:cut-notation_1step}). We say that a cell $t \subseteq[0,1]^d$ is rectangular if it is of the form $t = \bigtimes_{j=1}^d t^{(j)} \subseteq [0,1]^d$ with intervals $t^{(j)} \subseteq [0,1]$. 
    \begin{definition}\label[definition]{def:cut-notation}
        Given a rectangular subset $t \subseteq [0,1]^d$ and pairs $\bc = (j,c), \bc_1 = (j_1,c_1), \bc_2 = (j_2,c_2) \in \{1,\dots,d \} \times [0,1]$ denote by $(t^{\bc,\bc_1,\bc_2}_{j,k})_{j,k = 1,2}$ the partition of $t$ obtained from the following (consecutive) splits:
        \begin{enumerate}
            \item Split $t$ at split point $\bc$ into $t_1 = \{ x \in t | x_j \leq c\}$ and $t_2 = t\setminus t_1$.
            \item Then, split $t_1$ and $t_2$ from (a) at split points $\bc_1$ and $\bc_2$, respectively, i.e.
            \begin{itemize}
                \item Let $t_{1,1} = \{ x \in t_1 | x_{j_1} \leq c_1\}$ and $t_{1,2} = t_1 \setminus t_{1,1}$
            \item Let $t_{2,1} = \{ x \in t_2 | x_{j_2} \leq c_2\}$ and $t_{2,2} = t_2 \setminus t_{2,1}$.
            \end{itemize}
        \end{enumerate}
        Then, we define
        \begin{align*}
        \zwei_t\left( \bc | \bc_1,\bc_2 \right) := \begin{cases}
             -\infty,   & \ \text{if some $t^{\bc, \bc_1,\bc_2}_{j,k}$ is empty,} \\
            \zwei\left( t; (t^{\bc,\bc_1,\bc_2}_{j,k})_{j,k \in \{1,2\} } \right), & \text{ otherwise.}
        \end{cases}
    \end{align*}
    \end{definition}

     \begin{definition}\label[definition]{def:cut-notation_1step}
        Given a rectangular subset $t \subseteq [0,1]^d$ and a split point $\bc = (j,c) \in \{1,\dots,d \} \times [0,1]^d$ denote by $t^{\bc}_{1}= \{ x \in t | x_j \leq c\}$ and by $t^{\bc}_2 = t \setminus t^{\bc}_1$.
        Then, we define
        \begin{align*}
        \zwei_t( \bc ) := \begin{cases}
             -\infty,   & \ \text{if some $t^{\bc}_{j}$ is empty,} \\
             \zweiarg{2}\left( t; (t^{\bc}_{j})_{j\in\{1,2\}} \right), & \text{ otherwise.}
        \end{cases}
    \end{align*}
    \end{definition}

    \subsection{Tree Growing Rules and Split Determining Sequences}\label{subsec:treegrowingrules}
    To inherit the algorithmic structure for RSRF into our notation, we borrow the term \emph{tree growing rule} from \citet{Chi}. A tree growing rule $T$ is always associated with a (deterministic) splitting criterion and, given $k\in 2\N$, it outputs the tree $T$ obtained by growing all cells up to level $k$ (starting from the root cell). The name $T$ is thus used for both the tree growing rule and the tree obtained by growing according to this rule. 
    For instance, given values $(x_i,y_i), i=1,\dots,n$, the Sample-CART-criterion forms a deterministic splitting criterion. The tree $\hat{T}$ from \Cref{def:algo_theory} can be seen as a tree growing rule when $(x_i,y_i), i=1,\dots,n$ and realizations of the random splits are given. Given a tree growing rule we can associate the corresponding estimator to it as well as its population version.
    \begin{definition}
	Let $x \in [0,1]^d$, $(x_i,y_i) \in [0,1]^d \times \R$, $i=1,\dots n$. Let $T$ be a tree growing rule.
	\begin{align*}
		\hat{m}_T(x) &:= \hat{m}_T( (x_i,y_i)_{i=1}^n,x) = \sum_{t \in \leafs{T} } \Big( \frac{1}{\card{t}} \sum_{ {i : x_i \in t} } y_i \Big) \1_{x\in t}&& \quot{Sample version}, \\
		m^*_T(x) &:= \sum_{t \in \leafs{T}} \1_{x\in t} \E( m(X) | X \in t) && \quot{Population version}.
	\end{align*}
    \end{definition}
     Note that $m^*_T$ may still depend on $(x_i,y_i)$, $i=1,\dots,n$, through the tree growing rule $T$. \\
     We need to formalize the random splits used in \rsrf. To do so, let us think of a binary tree of level $k$, where at each level $l=0,2,\dots, k-2$, any node is assigned with $W$ values $r_1,\dots, r_W \in \{1,\dots,d\} \times [0,1]$. Assume we have given such a value $r_w = (\gamma, u)$ and a rectangular cell $t$. Then, $t$ can be split at coordinate $\gamma$ and split point $t_{\gamma,\text{min}} + u( t_{\gamma,\text{max}} - t_{\gamma,\text{min}})$, 
    where $t_{\gamma,\text{min}}$ and $t_{\gamma,\text{max}}$ denote the end points of the $\gamma$th side $t^{(\gamma)}$ of $t$. The collection $\mathcal R_k$ of all such values (indexed via tree depth $l$, $j=1,\dots, 2^l$ and $w$) is called \emph{split determining sequence}. To summarize, a split determining sequence assigns nodes of a tree with $W$ possible split points. When $T$ is a tree growing rule, we say that it is \emph{associated} with the split determining sequence $\mathcal R_k$ if, at any second level for any node, a split is chosen from the $W$ possible split points corresponding to this node.\\    
    Suppose now the values $r_w$ above are instead random variables $R_w = (\gamma^w, U^w)$ with $\gamma^w$ uniform on $\{1,\dots d\}$ and (independently) $U^{w} \sim \text{Unif}[0,1]$. Furthermore, assume they are all independent (over $w$ and all nodes) and denote their collection by $\bR_k$. This is called a \emph{random split determining sequence}. Then, we say that a tree growing rule is associated with $\bR_k$ if it is associated with it on realization level.\\
    Now, let $\bR_k$ be such a random split determining sequence and assume that the family $\bR_k$ is independent of $D_n := (X_1,Y_1,\dots, X_n,Y_n)$. Clearly, we can regard the RSRF tree $\hat{T}$ as being associated with $\bR_k$. The RSRF estimator $\hat{m}_{\hat{T}}(x)$ then rewrites \[ \hat{m}_{\hat{T}}(x) =\hat{m}_{\hat{T}}(x, D_n, \bR_k).\]
    We note that we defined the terminology (``split determining sequence'', ``associated'') for RSRF more formally. Since we find this notationally demanding, we postpone this to \Cref{sec:split_determining_sequences}.
  
 \subsection{Outline of the Proof of \Cref{thm:main}}
Observe that
\begin{align*}
    \E\Big[ \big(m(X) -  & \hat{m}_{\hat{T}}(\bR_k, D_n, X) \big)^2 \Big]
    \\ &\leq 2 \underbrace{\E\big[ (m(X) - m^*_{\hat{T}} (\bR_{k}, X) )^2\big]}_{\text{\quot{Bias term}} } + 2\underbrace{\E\big[ (\hat{m}_{\hat{T}}(\bR_k,D_n,X) - m^*_{\hat{T}} (\bR_{k}, X) )^2\big] }_{\text{\quot{Estimation variance term} } }
\end{align*}
Following \citet{Chi}, we analyze both summands separately. This results into \Cref{lemma:error1,lemma:error2} in \Cref{subsec:notes_proof}. We note that the analysis for the estimation variance term follows readily from the approximation theory developed by \citet{Chi}, see also the notes after the statement of \Cref{lemma:error2} in \Cref{subsec:notes_proof}. Therefore, let us focus on how to deal with the bias term. As mentioned, the main difference is that we need to incorporate our randomized ($2$-step) SID Condition \ref{cond:C1}. First, it is a simple consequence of \ref{cond:C1} that the following variant holds, provided that $W$ is chosen appropriately.
\begin{condition}{(C1')}\label{cond:c1'}
    	There exist $\alpha_1 \geq 1$, $\delta \in [0.75,1]$, $W\in \N$ such that for any rectangular cell $t =\bigtimes_{j=1}^d t^{(j)}$ we have
    	\begin{align}\label{eq:C1'-good}
    		\var( m(X) | X\in t) \leq \alpha_1 \sup_{w,\bc_1,\bc_2} \zwei_t( \bc^w| \bc_1, \bc_2)   \text{ with probability at least } \delta.
    	\end{align}
    	Here, the supremum is over $w \in \{1,\dots, W\}$ and split points $\bc_1,\bc_2 \in \{1,\dots,d\} \times [0,1]$. Furthermore, $\bc^w = (\gamma^w,U^w)$, $w=1,\dots,W$ where $(\gamma^1,U^1),\dots,(\gamma^W, U^W)$ are $i.i.d.$ random variables with $\PP( \gamma^w = j ) = \frac{1}{d}$, $j \in \{1,\dots, d\}$ and, conditionally on $\gamma =j$, $U^w$ is uniform on $t^{(j)}$.
\end{condition}
The probability $\delta \geq 0.75$ may seem somewhat arbitrary but it is necessary for the following property to hold. Suppose that $T$ is a tree growing rule of depth $k$ associated to $\bR_k$. Then under \ref{cond:c1'}, as $k \to \infty$, the probability that the inequality \eqref{eq:C1'-good} holds true at least once in any tree branch, converges to $1$, $k\to \infty$. This allows us to ensure that the inequality from \ref{cond:c1'} holds true sufficiently often within tree branches. More precisely, we can deduce that, on an event with high probability, Condition \ref{cond:C6'} holds.
\begin{condition}{(C6')}\label{cond:C6'}
    A tree growing rule $T$ with root $\rootcell$, associated to a split determining sequence fulfills condition (C6') with $\alpha_1\geq 1 $, $M \in \N$, if for any $a\in\{1,2\}^k$ (i.e. for any tree branch) there exist at least $M$ (C1')-good cells among $\rootcell_{a_1\dots a_{l-2}}$, where $l \in \{2,4,\dots, k\}$. By a (C1')-good cell we denote a cell $t$ such that the inequality         \begin{align*}
            \var( m(X) | X\in t) \leq \alpha_1 \sup_{w,\bc_1,\bc_2} \zwei_t\left( \bc^w |\bc_1,\bc_2 \right)
        \end{align*}
    holds.
\end{condition}
To sum this up, thanks to our \Cref{cond:C1} and due to construction of the RSRF tree $\hat{T}$ it can be shown that, on some high probability event, $\hat{T}$ fulfills \Cref{cond:C6'}.\\ \ \\
Next, let us illustrate how the impurity gain inequality \eqref{eq:C1'-good} is used. Suppose $t$ is a (deterministic) cell such that the inequality $\var(m(X)|X\in t) \leq \alpha_1 \zwei(t;t_{\bullet \bullet})$ holds with daughter cells $t_{\pp}= (t_{a})_{a \in \{1,2\}}$. Then, 
\begin{align*}
    \eins( t; t_{\pp} ) = \var(m(X) |X\in t) - \zwei(t;t_{\pp})\leq \var(m(X) |X\in t) (1-\alpha_1^{-1} ).
\end{align*}
By the definition of $\eins$ and \Cref{remark:eins_plus_zwei}, it holds that
\begin{align*}
    \E\bigg[ \Big( m(X) - \sum_{ a \in \{1,2\}^2} \1_{(X\in t_a)}\E[m(X)|X\in t_{a}] \Big)^2 \bigg] &= \sum_{a\in \{1,2\}^2 } \PP(X \in t_{a}) \var(m(X)| X \in t_a) \\&\leq \PP(X\in t) \var(m(X) |X\in t) (1-\alpha_1^{-1}),
\end{align*}
and $(1-\alpha_1^{-1})$ is a constant strictly smaller than $1$. In our case, this argument can be repeatedly applied $M$ times, in view of \ref{cond:C6'}. However, the cells of $\hat{T}$ are not deterministic as they are subject to sample variation (and the randomization). Therefore, another intermediate condition, \ref{cond:C5'} below, is needed. This is a variant of Condition 5 of \citet{Chi} taylored to the present setup. At this point, two things are to be shown
\begin{itemize}
    \item If a tree growing rule $T$ satisfies Conditions \ref{cond:C5'} and \ref{cond:C6'}, then the error $\E[ (m(X) - m^*_{T}(X) )^2]$ can be controlled
    \item The tree growing rule $\hat{T}$ (more precisely, a variant of it) satisfies Conditions \ref{cond:C5'} and \ref{cond:C6'} with high probability, conditionally on data and randomization.
\end{itemize}
    \begin{condition}{(C5')}\label{cond:C5'}
    Let $T$ be a tree growing rule of depth $k$ associated with a split determining sequence. Denote the root cell by $\rootcell$. Given $a \in \{1,2\}^k$, define for $\varepsilon >0$ 
        \begin{align*}
        L_{\varepsilon, a} := \Big\lbrace l\in\{2,\dots, k\} \ : \text{for }t=\rootcell_{a_1\dots a_{l-2}} \text{ it holds that }\zwei(t,t_{\pp}) \leq \varepsilon \Big\rbrace.
        \end{align*}
    We say that $T$ fulfills condition (C5') with $\varepsilon > 0, \alpha_2 \geq 1$ if for all $a \in \{1,2\}^k$ (i.e. for all tree branches in $T$) it holds that
    	\begin{align*}
			l \in L_{\varepsilon, a} &\implies \text{For the cell $t=\rootcell_{a_1\dots a_{l-2}}$ it holds that }\sup_{w, \bc_1,\bc_2} \zwei_{t}(\bc^w | \bc_1,\bc_2) \leq \alpha_2 \varepsilon \\
			l \notin L_{\varepsilon, a} &\implies \text{For the cell $t=\rootcell_{a_1\dots a_{l-2}}$ it holds that } \sup_{w, \bc_1,\bc_2} \zwei_t \left( \bc^w| \bc_1,\bc_2 \right) \leq \alpha_2 \zwei( t; t_{\pp}).
		\end{align*}
    Note that above, for each cell $t$ in the tree, $\bc^w$ denotes the split points induced by the split determining sequence.
\end{condition}

 \subsection{Proof of \Cref{thm:main}}\label{subsec:notes_proof}
	\Cref{thm:main} is a consequence of \Cref{lemma:error1,lemma:error2} below analyzing the \quot{bias term} and \quot{estimation variance term} seperately. While \Cref{lemma:error2} follows from \citet{Chi}, essentially new arguments are necessary for the proof of \Cref{lemma:error1}, due to the random nature of our SID condition.
 \begin{proofname}{Proof of \Cref{thm:main}.} The proof combines both \Cref{lemma:error1,lemma:error2}. Note that, here, given some $\varepsilon>0$, we have to choose $M$ such that $(1-(\alpha_1\alpha_2)^{-1})^M < \varepsilon$. Then \Cref{lemma:error1} is used with this choice of $M$. The remaining details on the combination of \Cref{lemma:error1,lemma:error2} are along the lines of \citet[Proof of Theorem 1 in Appendix A.3]{Chi}.

    \end{proofname}
	\begin{lemma}[\quot{Bias term}]\label[lemma]{lemma:error1}
	Let $\hat{T}$ be the Random-Sample-CART tree growing rule of depth $k$ and assume Conditions \ref{cond:C1}, \ref{cond:C2}, \ref{cond:C3}, \ref{cond:C4} and let $k=k_n$ and $W$ be as in \Cref{thm:main}. Let $\alpha_2> 1$, $\eta \in (0, \frac{1}{8})$ and $\tau \in (2\eta, \frac{1}{4})$. Let $\varepsilon'> 0$ and $M \in \N$ be arbitrary. Then, for all large $n$,
		\begin{align*}
			\E\big[ (m(X) - m^*_{\hat{T}} (\bR_{k}, X) )^2\big] \leq 8 M_0^2 n^{-\tau}2^k + 2 \alpha_1\alpha_2 n^{-\eta} + 2 M_0^2 (1-(\alpha_1 \alpha_2 )^{-1})^M + 2n^{-1} + \varepsilon'.
		\end{align*}
  In particular, 
		\begin{align*}
			\lim_{n\to\infty} 	\E\big[ (m(X) - m^*_{\hat{T}} (\bR_{k}, X) )^2\big]  = 0.
		\end{align*}
	\end{lemma}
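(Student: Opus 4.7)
The plan is to derive the lemma from a deterministic bound on $\E[(m(X)-m^*_T(X))^2]$ that holds for any tree growing rule $T$ simultaneously satisfying Conditions \ref{cond:C5'} and \ref{cond:C6'}, and then to show that the random rule $\hat T$ satisfies both of these conditions on an event of high probability. The error decomposition is the standard one: on the good event, a recursion in depth yields the geometric factor $(1-(\alpha_1\alpha_2)^{-1})^M$ together with an additive approximation threshold $\alpha_1\alpha_2 n^{-\eta}$; on its complement one uses the trivial bound $|m^*_T(x)-m(x)|\leq 2M_0$ and only has to control the probability of the bad event.

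First I would establish \ref{cond:C6'} with high probability. The random split-determining sequence $\bR_k$ is independent of the data, and at even depths along any fixed branch there are $k/2$ ancestor cells. Condition \ref{cond:c1'}, which follows from \ref{cond:C1} thanks to the choice of $W$ in Corollary \ref{thm:main}, forces each such cell to be \ref{cond:c1'}-good conditionally with probability at least $\delta \geq 3/4$. A binomial tail estimate followed by a union bound over the $2^k$ branches gives an event on which every branch contains at least $M$ \ref{cond:c1'}-good cells, with failure probability at most $2^k\exp(-c'k)$; since $k=k_n$ grows slower than $\log n$ (with sufficiently small constant), this is bounded by $n^{-1}$, which after combining with the trivial $4M_0^2$ bound yields the $2n^{-1}$ contribution.

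Next I would verify \ref{cond:C5'} at level $\varepsilon = n^{-\eta}$. This is the step that replaces the sample impurity $\widehat\zwei$ governing the greedy rule with its population counterpart $\zwei$. Here one invokes the geometric complexity expressed by \ref{cond:T_dim} and \ref{cond:T_boundary}, together with the moment and boundedness assumptions \ref{cond:C2}--\ref{cond:C4}, to obtain a uniform deviation inequality for $\widehat\zwei-\zwei$ across all admissible cells and candidate partitions. A union bound over at most $Bn^\beta$ candidate partitions per cell and $2^k$ branches then produces an event of probability at least $1-2 n^{-\tau} 2^k$ on which \ref{cond:C5'} holds with the chosen $\alpha_2$; the constraint $\tau>2\eta$ is precisely what allows the confidence level to dominate the resolution $n^{-\eta}$ after squaring to pass from $[\hat\mu-\mu]$ to $[\hat\mu-\mu]^2$. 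Multiplied by the trivial $4M_0^2$ bound this gives the term $8M_0^2 n^{-\tau}2^k$.

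Finally I would run the recursion on the intersection of these two events. At any of the $M$ indices where the ancestor $t=\rootcell_{a_1\dots a_{l-2}}$ is \ref{cond:c1'}-good, Condition \ref{cond:c1'} gives $\var(m(X)|X\in t)\leq \alpha_1\sup_{w,\bc_1,\bc_2}\zwei_t(\bc^w|\bc_1,\bc_2)$; \ref{cond:C5'} then turns this supremum into either $\alpha_2\zwei(t;t_{\pp})$ or into $\alpha_2\varepsilon$, depending on whether $l\in L_{\varepsilon,a}$. Using \Cref{remark:eins_plus_zwei} on each good level yields a one-step contraction $\eins(t;t_{\pp})\leq (1-(\alpha_1\alpha_2)^{-1})\var(m(X)|X\in t)$ whenever the variance exceeds $\alpha_1\alpha_2\varepsilon$. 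Iterating this contraction across the $M$ good levels, summing over leaves weighted by $\PP(X\in t)$, and absorbing the thresholds gives the geometric term $2M_0^2(1-(\alpha_1\alpha_2)^{-1})^M$, the linear term $2\alpha_1\alpha_2 n^{-\eta}$, and the remaining $\varepsilon'$ slack coming from the grid approximation controlled by the $\#$-operator. The principal obstacle is the uniform concentration step for \ref{cond:C5'}: one must exploit \ref{cond:T_dim} and \ref{cond:T_boundary} to quantify the complexity of every cell and candidate partition the algorithm can generate, and show that the resulting failure probability still overcomes the factor $2^k$ from the branch-wise union bound, which is tight and forces the growth constraint $k<c\log n$ with sufficiently small $c$.
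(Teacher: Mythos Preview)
Your proposal misses the central construction of the paper's proof and misattributes essentially every term in the final bound.

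The key missing ingredient is the \emph{Semi-Sample tree} $\hat T_\zeta$ (\Cref{def:semisample}). Condition \ref{cond:C5'} cannot be verified directly for $\hat T$: on cells $t$ with $\PP(X\in t)<\zeta=n^{-\tau}$ the empirical impurity $\widehat\zwei$ need not concentrate around $\zwei$, so the sample-CART split may fail to be near-optimal at the population level, and no uniform deviation inequality will save you there. The paper's remedy is to trim $\hat T$ on these small-probability branches and regrow them using the \emph{theoretical} optimum; this yields $\hat T_\zeta$, for which \ref{cond:C5'} holds on an event $U_n$ with $\PP(U_n^c)=o(n^{-1})$ (\Cref{thm:C5-fulfilled}). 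The term $8M_0^2 n^{-\tau}2^k$ then arises not from a failure probability at all, but from the deterministic bound $\E[(m^*_{\hat T}-m^*_{\hat T_\zeta})^2]\leq \zeta\,2^{k+2}M_0^2$, since the two trees differ only on at most $2^k$ leaves each of measure below $\zeta$. Your account, which produces this term as a concentration failure probability and attaches $\varepsilon'$ to grid approximation, does not match the actual mechanism.

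Your argument for \ref{cond:C6'} is also flawed. A binomial tail along one branch gives at best order $(1-\delta)^{k/2}\,k^{M-1}$ for the probability of fewer than $M$ good cells; after the union bound over $2^k$ branches this becomes $2^k(1-\delta)^{k/2}k^{M-1}$, which does \emph{not} tend to zero at the guaranteed threshold $\delta=\tfrac34$ (it equals $k^{M-1}$). Branches share ancestors, so independence across branches is unavailable and the naive union bound is too crude. The paper instead derives a recursion $p_k\ge \delta+(1-\delta)p_{k-2}^4$ for the probability that \emph{every} branch contains a good cell (\Cref{lemma:goodsplits_everybranch}), proves $p_k\ge 1-2/k$, and bootstraps to $M$ good cells in \Cref{lemma:C1impliesC6}. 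The resulting $\PP(B_n^c)$ is what produces the $\varepsilon'$ term, while the $n^{-1}$ term comes from $\PP(U_n^c)$.
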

	\begin{lemma}[\quot{Estimation variance term}, see {\citeauthor{Chi}, \citeyear{Chi}}, Lemma 2.]\label[lemma]{lemma:error2}
		Assume that Conditions \ref{cond:C2}, \ref{cond:C3} and \ref{cond:C4} hold. Let $\eta \in (0,\frac{1}{4})$, $c\in (0, \frac{1}{4})$ and $\nu > 0$. Then there exists $C_1> 0$ such that for $n$ large enough and each sequence $k=k_n$ with $1 \leq k \leq c \log_2 (n)$
		\begin{align*}
			\E\big[ (\hat{m}_{\hat{T}}(\bR_k,D_n,X) - m^*_{\hat{T}} (\bR_{k}, X) )^2\big] \leq n^{-\eta} + C_1 2^k n^{-\frac{1}{2} + \nu}.
	 	\end{align*}
	\end{lemma}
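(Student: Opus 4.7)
Since Lemma \ref{lemma:error2} is attributed directly to \citet{Chi} (their Lemma 2), the plan is to verify that their argument carries over to the RSRF setup and then simply invoke it. The crucial observation is that the estimation-variance analysis uses only the algorithmic structure of the tree (encoded in Conditions \ref{cond:T_dim}, \ref{cond:T_boundary}, which we have verified for RSRF) together with the distributional Conditions \ref{cond:C2}--\ref{cond:C4}, and does not touch the SID condition at all; so no new ideas are needed beyond Chi's to establish the bound in our setting.

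First I would condition on the random split-determining sequence $\bR_k$. Since $\bR_k$ is independent of $D_n$, this freezes the algorithm's randomization, and taking expectation over $\bR_k$ at the end preserves any uniform-in-$\bR_k$ bound. Writing the difference $\hat{m}_{\hat T}-m^*_{\hat T}$ as a sum over leaves of $\hat T$, the expected squared error decomposes into a sum of terms $(\hat{\mu}(t)-\mu(t))^2$ weighted by $\PP(X\in t\mid D_n,\bR_k)$. The deviation $\hat{\mu}(t)-\mu(t)$ then splits naturally into a sampling-error piece (empirical mean of $m(X_i)$ on $t$ versus $\mu(t)$) and a noise piece $(\#t)^{-1}\sum_{X_i\in t}\varepsilon_i$, which will be controlled by separate concentration arguments.

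Next comes the key step: grid approximation plus uniform concentration. Conditions \ref{cond:T_dim} and \ref{cond:T_boundary} imply that cells appearing in $\hat T$ can, up to negligible boundary effects with respect to $g_n=\lceil n^{1+\epsilon}\rceil$, be identified with unions of grid boxes drawn from a family of polynomial cardinality at most $O(n^{\beta k})$, which remains polynomial in $n$ since $k\leq c\log_2 n$. On each fixed grid-based cell I would apply Bernstein's inequality to the bounded sampling-error piece using \ref{cond:C4}, and a moment inequality to the noise piece using the high-moment assumption in \ref{cond:C3} with $q$ chosen large relative to $\nu$. A union bound over the polynomial family loses only an $n^{\nu/2}$-factor and yields uniform per-cell control of order $n^{-1/2+\nu}$ on a high-probability event. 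Cells whose sample count $\#t$ is too small for concentration to kick in are handled separately: the density lower bound in \ref{cond:C2} combined with a standard empirical-process estimate ensures $\PP(X\in t\mid D_n)$ is small with overwhelming probability, so these cells are absorbed into the $n^{-\eta}$ remainder term.

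The main obstacle is precisely this uniform concentration over the data-dependent leaf collection $\leafs{\hat T}$, since the cells are themselves chosen based on the $Y_i$. This is exactly what the grid-approximation device of \citet{Chi} was built for: it trades the random class of possible cells for a polynomial-size deterministic class at the cost of an approximation error that is negligible by the choice of $\epsilon>0$ in $g_n$. Summing the per-cell bounds over at most $2^k$ leaves and taking final expectation over $\bR_k$ then yields the claimed bound $n^{-\eta}+C_1 2^k n^{-1/2+\nu}$, uniformly for $k \leq c\log_2 n$.
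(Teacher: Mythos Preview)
Your proposal is correct and takes essentially the same approach as the paper: both observe that Chi's Lemma 2 applies because its proof does not use the CART criterion (or any SID-type condition), only the iterative partitioning structure, and then invoke it directly. The paper's justification is even shorter than yours---it simply notes that for RSRF the cells remain rectangular, so Chi's original argument carries over verbatim without needing to pass through the general Conditions \ref{cond:T_dim} and \ref{cond:T_boundary}.
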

	For the proof of \Cref{lemma:error2} we refer to \citet[Lemma 2]{Chi}. Note that, therein, it is not used that the tree is grown using the Sample-CART-criterion. Instead, it is only important that the space $[0,1]^d$ is partitioned iteratively by placing rectangular cuts for any cell in the partition, starting with $[0,1]^d$. This is repeated at most $c \log_2 (n)$ times. Hence, the proof of \citet[Lemma 2]{Chi} applies to \Cref{lemma:error2} above, when $\hat{T}$ with depth $k$ is the Random-Sample-CART tree. We note that, for the analogous statement to \Cref{lemma:error2} under the general setting from \Cref{thm:main3}, more care is needed. See \Cref{subsec:proof-main3}. \\ \ \\
    To summarize, we are left with the proof of \Cref{lemma:error1} which will be addressed in \Cref{sec:proofs}. Proving \Cref{lemma:error1} is the main challenge of this paper.
\section{Proofs}\label{sec:proofs}
\subsection{Proof of \Cref{lemma:error1}}\label{subsec:proof_lemma_error1}
	As argued in \Cref{subsec:notes_proof}, to establish the consistency result from \Cref{thm:main}, it only remains to prove \Cref{lemma:error1}. This follows from some intermediate results and we outline how these are combined to get a proof for \Cref{lemma:error1}. The proof of \Cref{lemma:error1} itself is to be found at the end of this section.\\
 First, for a deterministic tree growing rule, it is shown that when it satisfies Conditions \ref{cond:C5'} and \ref{cond:C6'}, it is possible to control the error $\E[ (m(X) - m^*_{\hat{T}}(X) )^2]$. This is \Cref{thm3} below. Compare also \citet[Theorem 3]{Chi}.

	\begin{theorem}\label{thm3}
		Suppose $T$ is a tree growing rule of depth $k$ which fulfills Conditions \ref{cond:C5'} with $\alpha_2 \geq 1$, $\varepsilon \geq 0$ as well as \Cref{cond:C6'} with $M\in\N$. Then,
		\begin{align*}
			\E\Big[ \big( m(X) - m^*_T(X) \big)^2 \Big] \leq \alpha_1 \alpha_2 \varepsilon + (1-(\alpha_1\alpha_2)^{-1})^{M} \var(m(X) ).
		\end{align*}
	\end{theorem}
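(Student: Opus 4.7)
My plan is to prove the bound by establishing a sharper statement about the contribution of each subtree and then specializing it to the root. For a cell $\rootcell_b$ in $T$ at an even level $2i \leq k$, set $V(b) := \PP(X\in\rootcell_b)\var(m(X)|X \in \rootcell_b)$ and let $S(b) := \sum_{a \in \{1,2\}^{k-2i}} V(ba)$ denote the total variance contribution of the subtree rooted at $\rootcell_b$. By \Cref{remark:eins_plus_zwei}, $\sum_{\tilde b \text{ grand-child of } b} V(\tilde b) = \PP(X\in\rootcell_b)\,\eins(\rootcell_b;\rootcell_{b\bullet\bullet})$ and in particular $S(\rootcell) = \E[(m(X) - m^*_T(X))^2]$. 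The first key step is a single-step contraction at (C1')-good cells, derived by combining the goodness inequality from \ref{cond:C6'} with \ref{cond:C5'} and setting $\lambda := (\alpha_1\alpha_2)^{-1}$: at a (C1')-good cell $\rootcell_b$, if $\zwei(\rootcell_b;\rootcell_{b\bullet\bullet})>\varepsilon$ (``big-good''), then \ref{cond:C5'} yields $\sup_{w,\bc_1,\bc_2}\zwei_{\rootcell_b}(\bc^w|\bc_1,\bc_2)\leq \alpha_2\,\zwei(\rootcell_b;\rootcell_{b\bullet\bullet})$, so $\zwei\geq \lambda\var(m|\rootcell_b)$ and hence $\sum_{\tilde b}V(\tilde b)\leq (1-\lambda)V(b)$; if instead $\zwei\leq\varepsilon$ (``small-good''), then \ref{cond:C5'} gives $\sup\leq \alpha_2\varepsilon$, and the (C1')-goodness inequality forces $\var(m|\rootcell_b)\leq \alpha_1\alpha_2\varepsilon$. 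At cells that are not (C1')-good only the trivial bound $\sum V(\tilde b)\leq V(b)$ coming from $\eins\leq\var$ is available.

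With these ingredients in place I would prove, by downward induction on the depth, that whenever $M'(b)$ denotes the minimum over sub-branches descending from $\rootcell_b$ of the number of (C1')-good cells among $\rootcell_{a_1\dots a_{2j}}$ with $i \leq j \leq k/2 - 1$,
\begin{align*}
S(b) \;\leq\; (1-\lambda)^{M'(b)}\, V(b) \;+\; \PP(X\in\rootcell_b)\,\alpha_1\alpha_2\varepsilon.
\end{align*}
The base case $2i=k$ is immediate because $S(b)=V(b)$ and $M'(b)=0$. The inductive step splits into three cases according to the status of $\rootcell_b$: a big-good cell absorbs one factor $(1-\lambda)$ from the contraction while its grand-children inherit $M'(b)-1$ good cells on every sub-branch, so the inductive hypothesis supplies $(1-\lambda)^{M'(b)-1}$ on the outer sum; a small-good cell is handled directly via $S(b)\leq V(b)\leq\PP(X\in\rootcell_b)\alpha_1\alpha_2\varepsilon$ without invoking the inductive hypothesis at all; a non-good cell uses $\sum V(\tilde b)\leq V(b)$ while preserving $M'(b)$ good cells in every grand-child sub-branch. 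In each case the additive term telescopes cleanly because the probabilities of grand-children sum to the probability of the parent. Specializing to $b=\rootcell$ and using \ref{cond:C6'} to guarantee $M'(\rootcell)\geq M$ then yields $\E[(m(X)-m^*_T(X))^2] \leq \alpha_1\alpha_2\varepsilon + (1-(\alpha_1\alpha_2)^{-1})^M \var(m(X))$, as claimed.

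The main subtlety I anticipate is the bookkeeping in the small-good case. A priori one might worry that such cells interrupt the desired geometric $M$-fold contraction along a branch, but the key observation is that the full variance mass $V(b)$ at a small-good cell is already dominated by $\PP(X\in\rootcell_b)\alpha_1\alpha_2\varepsilon$, so $S(b)$ is controlled by the additive term alone and no further inductive estimate on its subtree is needed. A secondary point is to verify that $M'$ decreases by exactly one across any (C1')-good cell (big or small); this is immediate from the relation $M'(b) = \mathbf{1}(\rootcell_b\text{ is (C1')-good}) + \min_{\tilde b} M'(\tilde b)$, but must be tracked consistently across the three cases of the induction in order for the exponents to line up and produce the final bound.
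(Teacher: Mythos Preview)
Your proof is correct and rests on exactly the same three ingredients as the paper's argument: the contraction $\sum_{\tilde b}V(\tilde b)\leq(1-\lambda)V(b)$ at big-good cells (the paper's Case~1, equation~\eqref{eq:improvementbound}), the direct bound $V(b)\leq\PP(X\in\rootcell_b)\alpha_1\alpha_2\varepsilon$ at small-good cells (Case~2, equation~\eqref{eq:alphabound}), and the trivial monotonicity $\sum_{\tilde b}V(\tilde b)\leq V(b)$ elsewhere (equation~\eqref{eq:simplebound}).

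The organisation, however, differs. The paper works iteratively from the leaves upward one level at a time, splitting the set of leaf indices into $T_\varepsilon(l)$ (branches that have already encountered a small-good cell) and its complement $T^\dagger(l)$, and handling the accumulated small-good mass $A$ at the end by a separate telescoping argument. You instead package the whole thing into a single inductive hypothesis on subtrees, $S(b)\leq(1-\lambda)^{M'(b)}V(b)+\PP(X\in\rootcell_b)\alpha_1\alpha_2\varepsilon$, where $M'(b)$ is the intrinsic minimum count of good cells over branches below $b$. This avoids the paper's bookkeeping of a specific choice of good indices $g_1(a)<\cdots<g_M(a)$ and the accompanying ``without loss of generality'' consistency remark; your recursion $M'(b)=\mathbf{1}(\rootcell_b\text{ good})+\min_{\tilde b}M'(\tilde b)$ makes the exponent tracking automatic. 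In short: same proof at the level of inequalities, but your inductive formulation is tidier and sidesteps the separate treatment of the $T_\varepsilon$ branches.
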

Then, it is shown that conditionally on the data $D_n$ and the randomization variables $\bR_{k}$, \Cref{cond:C5'} is satisfied, for a variant of $\widehat{T}$. Furthermore, on some high probability event, \Cref{cond:C6'} holds, assuming our \Cref{cond:C1}. All statements related to our \Cref{cond:C1} and their proofs are collected in \Cref{subsec:C1-related}.\\
That is, by conditioning on data $D_n$ and randomization $\bR_{k}$ we are able to apply \Cref{thm3} on some events with high probability. This will eventually lead to \Cref{lemma:error1}. \\
The variant of $\widehat{T}$ which fulfills \Cref{cond:C5'} is defined below in \Cref{def:semisample}. In words, as soon as tree cells become small, the Semi-Sample RSRF tree replaces the (Sample) CART splits by theoretical CART splits, and evaluates all candidate splits using the score $\zwei$, instead of $\widehat{\zwei}$.
\begin{definition}[Semi-Sample RSRF Tree]\label[definition]{def:semisample}
    Let $\hat{T}$ be the tree from RSRF based on $D_n$ and associated to the random split determining sequence $\bR_k$. Let us write $\widehat{T}_{\zeta}$, $\zeta\in [0,1]$, for the so-called Semi-Sample RSRF tree growing rule, which (given a realization of $\bR_k$) modifies $\widehat{T}$ as follows. For each $a \in \{1,2\}^k$ with $\PP(X \in \rootcell_{a}) < \zeta$, choose $l_0 := \min \big\lbrace l\in \{2,4,\dots, k\} : \PP(X \in \rootcell_{a_1\dots a_{l-2}}) < \zeta \big\rbrace$. Then, the tree branch (corresponding to $a$) is trimmed until depth $l_0-2$. New cells are then grown by iterating the following two steps starting from cell $t= \rootcell_{a_1\dots a_{l_0-2}}$.
    \begin{enumerate}
        \item Determine $\width$ many pairs $\bc^w = (j^w,c^w)\in \{1,\dots,d\} \times t^{(j)}$ according to the split determining sequence.
     \item Choose
     \begin{align*}
         (w^*, \bc^*_{1}, \bc^*_{2} ) \in \underset{w, \bc_1,\bc_2}{\arg\sup}\  \zwei_{t}( \bc^w| \bc_1,\bc_2).
     \end{align*}
     Split $t$ into $t_1, t_2$ at $\bc^{w^*}$ and then split $t_1$ at $\bc^*_{1}$ into $t_{1,1},t_{1,2}$. Define the daughter cells of $t_2$ analogously.
    \end{enumerate}
\end{definition}
	\begin{theorem}\label{thm:C5-fulfilled}
		Assume Conditions \ref{cond:C2}, \ref{cond:C3}, \ref{cond:C4}, $\eta \in (0, \frac{1}{8})$, $\tau \in (2\eta, \frac{1}{4})$, $k=k_n= c\log_2(n)$ with $c> 0$ and let $\zeta = n^{-\tau}$. Then, $\hat{T}_{\zeta}$ fulfills \Cref{cond:C5'} with $\varepsilon = n^{-\eta}$ on an event $U_n$ with $\PP(U_n) = 1 - o(n^{-1})$.
	\end{theorem}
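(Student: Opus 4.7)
My plan is to combine a uniform concentration of $\widehat{\zwei}$ around $\zwei$ with the two-regime construction of $\widehat{T}_\zeta$ in \Cref{def:semisample}. First I would define a good event $U_n$ on which the sample means $\hat\mu$ and sample proportions $\#t_l/\#t$ concentrate around their population counterparts, uniformly over all cells $t$ that may arise in $\widehat{T}_\zeta$ with $\PP(X\in t)\geq \zeta=n^{-\tau}$ and over all associated four-ary candidate partitions. Thanks to \Cref{cond:T_dim} applied iteratively along a tree of depth $k=c\log_2 n$, the number of such cell-partition pairs is polynomial in $n$. A Bernstein-type bound, combined with the lower density bound from \ref{cond:C2} and the moment assumption from \ref{cond:C3}, should then yield
\begin{align*}
    \sup_{t,P}\big|\widehat{\zwei}(t;P)-\zwei(t;P)\big|\leq \delta_n
\end{align*}
with probability $1-o(n^{-1})$, for some $\delta_n$ of order $n^{-(1-\tau)/2}\log^{1/2} n$. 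Since $\tau<1/4$ and $\eta<1/8$ give $(1-\tau)/2>\eta$, one can arrange $2\delta_n\leq (\alpha_2-1)\,n^{-\eta}$ for all large $n$, which is the form required below.

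Granted $U_n$, I would treat the two regimes separately along each branch. For a \emph{post-trim} cell $t$ at depth $\geq l_0$, \Cref{def:semisample} selects daughters $t_{\bullet\bullet}$ that directly maximize $\zwei_t(\bc^w|\bc_1,\bc_2)$, so both alternatives of \ref{cond:C5'} hold unconditionally with $\alpha_2=1$. For a \emph{pre-trim} cell $t$ with $\PP(X\in t)\geq \zeta$, I would write $\widehat{P}^w$ for the partition obtained by first splitting $t$ at $\bc^w$ and then applying sample-CART to the two daughters, and $P^w_\ast$ for the variant where the second-level splits instead maximize $\zwei$. The additive decomposition in \Cref{remark:2step-1step} (applied to $\widehat{\zwei}$) shows that, with $\bc^w$ fixed, $\widehat{P}^w$ maximizes $\widehat{\zwei}(t;\cdot)$ among all second-level choices. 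Chaining the uniform concentration twice on $U_n$ then yields
\begin{align*}
    \sup_{w,\bc_1,\bc_2}\zwei_t(\bc^w|\bc_1,\bc_2)
    = \max_w \zwei(t;P^w_\ast)
    \leq \max_w \widehat{\zwei}(t;\widehat{P}^w) + \delta_n
    \leq \zwei(t;t_{\bullet\bullet}) + 2\delta_n,
\end{align*}
where the middle inequality uses both the concentration bound on $P^w_\ast$ and the sample-CART optimality $\widehat{\zwei}(t;P^w_\ast)\leq\widehat{\zwei}(t;\widehat{P}^w)$, and the last inequality uses concentration applied to $\widehat{P}^{w_{\text{best}}}=t_{\bullet\bullet}$. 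Splitting into the cases $\zwei(t;t_{\bullet\bullet})\leq n^{-\eta}$ and $\zwei(t;t_{\bullet\bullet})>n^{-\eta}$ and using the calibrated choice of $\delta_n$ yields respectively the additive and the multiplicative bound required by \ref{cond:C5'}.

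The hard part will be the uniform concentration itself: the cells in $\widehat{T}_\zeta$ are simultaneously data- and randomization-dependent, and both $\widehat{\mu}(t_l)$ and the sample proportions have to be controlled for cells whose population mass is as small as $\zeta=n^{-\tau}$. The trimming at level $l_0$ is precisely what makes this feasible, since on a sample event of probability $1-o(n^{-1})$ it guarantees $\#t\gtrsim n^{1-\tau}$ for every cell entering the supremum. The polynomial cardinality bound from \Cref{cond:T_dim}, combined with the moment assumption in \ref{cond:C3} for a sufficiently large exponent $q$, should then close the union bound. This step is the probabilistic analogue of Condition~5 of \citet{Chi}; the new element in our setting is that the four-ary partition structure of \rsrf\ and the randomization variables $\bR_k$ must be tracked carefully inside the counting argument.
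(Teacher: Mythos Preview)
Your plan is correct and matches the paper's approach: the paper itself gives no details and simply states that the proof is analogous to \citet[Theorem 4]{Chi}, and your sketch is precisely a faithful elaboration of that argument adapted to the four-ary Random--CART structure of \rsrf. The two-regime split (post-trim cells satisfy \ref{cond:C5'} trivially with $\alpha_2=1$; pre-trim cells via the sandwich $\zwei(t;P^w_\ast)\leq\widehat{\zwei}(t;\widehat{P}^w)+\delta_n\leq\zwei(t;t_{\bullet\bullet})+2\delta_n$) is exactly how the Chi et~al.\ argument proceeds, and your identification of the grid-based cardinality bound from \ref{cond:T_dim} and the trimming guarantee $\#t\gtrsim n^{1-\tau}$ as the ingredients closing the uniform concentration is on target.
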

	\begin{proofname}{Proof of \Cref{lemma:error1}}\label{proof:lemma:error1} Let $\zeta = n^{-\tau}$. We can bound
		\begin{align}\label{eq:replace-sonnenbaum}
			\E\big[ (m(X) - m^*_{\hat{T}}& (\bR_{k}, X) )^2\big] \notag\\ &\leq 2 \E\big[ (m(X) - m^*_{\hat{T}_\zeta}(\bR_{k}, X) )^2\big] + 2 \E\big[ \big( m^*_{\hat{T}}(\bR_{k}, X)  - m^*_{\hat{T}_\zeta}(\bR_{k}, X)  \big)^2\big].
		\end{align}
  Since \Cref{cond:C1} holds with some $\alpha_1\geq 1$ and $\delta>0$, we obtain from \Cref{lemma:C1-C1'} that \Cref{cond:c1'} holds with $\alpha_1$, by the choice of $W \geq \frac{-2\log(2)}{\log (1-\delta )}$. We start with the first summand in \eqref{eq:replace-sonnenbaum}. Let $M\in \N$ and let $B_n$ be the event that the tree $\hat{T}_\zeta$ contains at least $M$ (C1')-good cells in any tree branch, that is, if $t$ is such a cell, we have
  \begin{align*}
    		\var( m(X) | X\in t) \leq \alpha_1 \sup_{w,\bc_1,\bc_2} \zwei_t( \bc^w| \bc_1,\bc_2) .  
  \end{align*}
  Note that $B_n$ is measurable with respect to the data variables $D_n$ and randomization $\bR_{k}$. 
    By an application of \Cref{lemma:C1impliesC6} (which is a consequence of \Cref{lemma:goodsplits_everybranch}), we know that the probability of having at least $M$ \ref{cond:c1'}-good cells in every tree branch is close to $1$ as long as the depth of the tree is chosen large enough. Thus, using that $k=k_n \to \infty$, we obtain that condition (C6') is fulfilled on some event $B_n$ with high probability, for large enough $n$.
    Note that if we condition on $D_n$ and $\bR_{k}$, the tree growing rule $\hat{T}_\zeta$ becomes deterministic. Furthermore, it satisfies \Cref{cond:C5'} with some constant $\alpha_2$ and \Cref{cond:C6'} on $U_n \cap B_n$, by the argument above and in view of \Cref{thm:C5-fulfilled}. Note that the event $U_n$ comes from the statement of \Cref{thm:C5-fulfilled}. Thus, \Cref{thm3} may be applied. Hence, for arbitrary $\varepsilon'$, and large enough $n$, we have
	\begin{align*}
	\E\big[ &( m(X) - m^*_{\hat{T}_\zeta}(\bR_{k}, X)  )^2 \big] \\
	&=\E\Big[	\E\big[ ( m(X) - m^*_{\hat{T}_\zeta}(\bR_{k}, X)  )^2 \big| D_n, \bR_{k} \big] \Big] \\
	&=	 \E\Big[ (\1_{U_n\cap B_n} + \1_{U_n^c \cup B_n^c} ) 	\E\big[ ( m(X) - m^*_{\hat{T}_\zeta}(\bR_{k}, X)  )^2 \big| D_n, \bR_{k} \big] \Big] \\
	&\leq \E\Big[ \1_{U_n\cap B_n}  \E\big[ ( m(X) - m^*_{\hat{T}_\zeta}(\bR_{k}, X)  )^2 \big| D_n, \bR_{k} \big] \Big] + Cn^{-1} + 4 M_0^2 \varepsilon' \\
	&\leq  \alpha_1 \alpha_2 n^{-\eta} + (1-(\alpha_1\alpha_2)^{-1})^{M} \var(m(X) ) + n^{-1} + 4M_0^2 \varepsilon' &&(\text{by \Cref{thm3}}).
	\end{align*}
	Here, we also used the boundedness assumption from \Cref{cond:C4}. Next, we deal with the second summand in \eqref{eq:replace-sonnenbaum}. Analogously to equation (A.83) of \citet[appendix]{Chi} and by using \Cref{cond:C4} again, it holds that
	\begin{align*}
		\E\big[ (m^*_{\hat{T}}(\bR_{k}, X) - m^*_{\hat{T}_{\zeta}} (\bR_{k}, X)^2) \Big] \leq \zeta 2^{k+2} M_0^2.
	\end{align*}
	The second statement follows by noticing that we assumed $c < \frac{1}{4}$ in the definition of $k$, and by choosing $\tau$ and $\eta$ accordingly. This finishes the proof.
	\end{proofname}
\subsection{Lemmas Related to Condition \ref{cond:C1}}\label{subsec:C1-related}

    \begin{lemma}\label[lemma]{lemma:C1-C1'}
        Assume \Cref{cond:C1} holds with some $\alpha_1 \geq 1$, $\delta > 0$. Furthermore, suppose $W \in \N$ with $W \geq \frac{-2\log(2)}{\log (1-\delta )}$. Then, \ref{cond:c1'} holds with $\alpha_1$, $W$.
    \end{lemma}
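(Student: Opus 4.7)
The plan is to exploit that the $W$ candidate splits $\bc^1,\ldots,\bc^W$ in Condition \ref{cond:c1'} are i.i.d.\ copies of the single random split $\bc$ appearing in Condition \ref{cond:C1}, so that the event in \ref{cond:c1'} is a union of $W$ independent copies of the event in \ref{cond:C1}. The obvious monotonicity of the supremum then transports the impurity-decrease inequality from one copy to the maximum across all $W$ copies.

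More concretely, I would proceed as follows. Fix a rectangular cell $t=\bigtimes_{j=1}^d t^{(j)}$ and, for $w=1,\ldots,W$, define the event
\begin{align*}
    A_w := \Big\{\var(m(X) \mid X\in t) \leq \alpha_1 \sup_{\bc_1,\bc_2} \zwei_t(\bc^w \mid \bc_1,\bc_2)\Big\}.
\end{align*}
Since the joint distribution of $(\gamma^w,U^w)$ matches exactly the distribution of $(\gamma,U)$ in \ref{cond:C1}, applying \ref{cond:C1} to each copy gives $\PP(A_w)\geq \delta$ for every $w$. Because the splits $\bc^1,\ldots,\bc^W$ are mutually independent, the events $A_1,\ldots,A_W$ are independent as well, so
\begin{align*}
    \PP\Big(\bigcup_{w=1}^W A_w\Big) = 1-\prod_{w=1}^W \PP(A_w^c) \geq 1-(1-\delta)^W.
\end{align*}

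Next I would observe that on the event $\bigcup_{w=1}^W A_w$ there exists some index $w^\star$ with $\var(m(X)\mid X\in t) \leq \alpha_1 \sup_{\bc_1,\bc_2}\zwei_t(\bc^{w^\star}\mid \bc_1,\bc_2) \leq \alpha_1 \sup_{w,\bc_1,\bc_2}\zwei_t(\bc^w\mid \bc_1,\bc_2)$, which is exactly the inequality \eqref{eq:C1'-good}. Finally, I would verify the threshold: the hypothesis $W\geq -2\log(2)/\log(1-\delta)$ (with $\log(1-\delta)<0$) rearranges to $W\log(1-\delta)\leq -2\log(2)$, hence $(1-\delta)^W\leq 1/4$. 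Combining these yields $\PP(\bigcup_w A_w)\geq 3/4$, which verifies \ref{cond:c1'} with the stated $\alpha_1$ and $W$ and with $\delta'=3/4\in[0.75,1]$.

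There is no real obstacle; the only subtlety worth flagging explicitly is the independence statement, which requires the assumption (already built into \rsrf) that the $W$ random split proposals at a cell are drawn independently of each other, and that the probabilistic guarantee in \ref{cond:C1} refers to the randomness of that single draw. Everything else is monotonicity of the supremum and the elementary bound $1-(1-\delta)^W\geq 3/4$.
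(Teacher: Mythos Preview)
Your proposal is correct and follows essentially the same approach as the paper: both arguments use that the $W$ random splits are i.i.d.\ copies of the single split in \ref{cond:C1}, bound the probability of the complementary event by $(1-\delta)^W$ via independence, and then verify $(1-\delta)^W\leq 1/4$ from the hypothesis on $W$. The paper phrases it in terms of the random variables $S_w=\sup_{\bc_1,\bc_2}\zwei_t(\bc^w\mid\bc_1,\bc_2)$ and their maximum rather than events $A_w$ and their union, but this is only a cosmetic difference.
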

    \begin{proof}
        Let $S_w : = \sup_{\bc_1,\bc_2}\zwei_{t}( \bc^w| \bc_1,\bc_2)$. Then, by \Cref{cond:C1}, there exists some $\alpha_1 \geq 1$ and $\delta > 0$ such that
        \begin{align*}
            \PP\left( S_w \geq \alpha_1^{-1} \var( m(X) |X \in t) \right) \geq \delta.
        \end{align*}
        Thus,
        \begin{align*}
            \PP\left( \max_{w=1,\dots,W} S_w < \alpha_1^{-1}\var( m(X)|X\in t) \right) &= \prod_{w=1}^W \left( 1- \PP\left( S_w \geq \alpha_1^{-1}\var( m(X)|X\in t) \right) \right) \\
            &\leq (1-\delta)^W
        \end{align*}
        Therefore and by the choice of $W$,
        \begin{align*}
            \PP\left( \max_{w=1,\dots,W} S_w \geq \alpha_1^{-1} \var(m(X)|X\in t) \right) \geq 1-(1-\delta)^W \geq 0.75.
        \end{align*}
    \end{proof}
    In the formulation of the lemma below, a cell $t\subseteq [0,1]^d$ is called (C1')-good if the inequality in \eqref{eq:C1'-good} is fulfilled.

\begin{lemma}[Good split in every branch]\label[lemma]{lemma:goodsplits_everybranch}
            Let $T$ be a tree growing rule (based on $D_n$) with associated randomization $\bR_k$ (see \Cref{subsec:treegrowingrules} for the definition of $\bR_k$) and assume the tree is grown up to depth $k$. Assume that \Cref{cond:c1'} holds with $\delta \geq 0.75$. Let $B(k)$ be the event that in every tree branch there exists at least one (C1')-good cell. Then,
            \begin{align*}
               1 - \PP( B(k) ) \leq \frac{2}{k}.
            \end{align*}
        \end{lemma}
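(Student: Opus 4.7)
The plan is to avoid a direct union bound over the $2^k$ root-to-leaf branches, which fails exactly at the threshold $\delta = 3/4$ (it yields $2^k(1-\delta)^{k/2} = 1$), and instead exploit the recursive tree structure. Since each 2-step iteration of RSRF splits a cell into four daughter cells, I would view the tree as a $4$-ary meta-tree of depth $h := k/2$, in which each of the $4^h = 2^k$ root-to-leaf branches visits exactly $h$ cells (at original depths $0, 2, \dots, k-2$) where (C1')-goodness is checked. For any deterministic rectangular cell $t$, define
\[
    a_h(t) := \PP\bigl(\text{the meta-subtree of depth } h \text{ rooted at } t \text{ has some branch of only (C1')-bad cells}\bigr),
\]
and set $a_h := \sup_t a_h(t)$. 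The goal reduces to showing $a_{k/2} \leq 2/k$, since $1 - \PP(B(k)) = a_{k/2}(\rootcell) \leq a_{k/2}$.

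Two observations drive the argument. First, by construction of $\bR_k$ (see \Cref{subsec:treegrowingrules}), the split variables attached to distinct cells of the tree are mutually independent. Second, if the root $t$ of a subtree is (C1')-good, then every branch through $t$ contains the good cell $t$, so the event ``$t$ has a bad branch'' is contained in ``$t$ is bad'', which by \ref{cond:c1'} has probability at most $1-\delta$. Conditional on the random split at $t$ (which fixes the four children), the events ``child $i$'s subtree has an all-bad branch'', $i = 1,\dots,4$, depend on disjoint sets of randomization variables and are therefore conditionally independent, each with conditional probability at most $a_{h-1}$ (by the uniformity built into the supremum defining $a_{h-1}$). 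Combining both observations yields the recursion
\[
    a_h \;\leq\; (1-\delta)\bigl(1 - (1-a_{h-1})^4\bigr), \qquad a_1 \leq 1-\delta.
\]

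With $\delta \geq 3/4$, this specializes to $a_h \leq \tfrac{1}{4}\bigl(1 - (1-a_{h-1})^4\bigr)$ with $a_1 \leq 1/4$, and I would prove by induction on $h$ that $a_h \leq 1/h$. The base case $h = 1$ is immediate. For the inductive step, substituting $a_{h-1} \leq 1/(h-1)$ into the recursion, the desired bound $\tfrac{1}{4}\bigl(1 - ((h-2)/(h-1))^4\bigr) \leq 1/h$ rearranges to the polynomial inequality $h(h-2)^4 \geq (h-4)(h-1)^4$, i.e., $2h^3 - 4h^2 - h + 4 \geq 0$ for $h \geq 2$, which is elementary (direct check for $h \in \{2,3,4\}$ and $2h^3 \geq 4h^2 + h - 4$ for $h \geq 3$). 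Setting $h = k/2$ gives $1 - \PP(B(k)) \leq 2/k$.

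The main obstacle I anticipate is pinning down the conditional-independence step precisely: one must track that the randomization variables governing each of the four children's subtrees are jointly independent of one another and of $t$'s own randomization, and that the factor $a_{h-1}$ applies uniformly to the (now random) children cells. Once this is established, the analytic side (solving the recursion via induction) is straightforward.
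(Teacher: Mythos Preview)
Your proposal is correct and follows essentially the same route as the paper. The paper also derives a recursion from the $4$-ary branching structure and then closes it by induction; the only cosmetic difference is that the paper tracks the \emph{good} probability $x_h \geq \delta + (1-\delta)x_{h-1}^4$ and shows $x_h \geq 1 - 1/h$ (in your indexing), which is exactly the complement of your $a_h \leq (1-\delta)\bigl(1-(1-a_{h-1})^4\bigr)$ and $a_h \leq 1/h$. Your polynomial check $2h^3-4h^2-h+4 \geq 0$ is the same computation as the paper's expansion of $\tfrac{1}{4}\bigl(3+(1-2/l)^4\bigr) \geq 1-2/(l+2)$.

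One small difference worth noting: you take $a_h := \sup_t a_h(t)$ over \emph{all} deterministic rectangular cells, whereas the paper conditions on $D_n$ and $\bR_{0:l-2}$ and takes an infimum over the cells actually present at level $l$ of the tree. Your formulation is arguably cleaner because it sidesteps the need to track random conditioning $\sigma$-fields; the uniformity in the sup is exactly what lets you apply the bound $a_{h-1}$ to the random children. The obstacle you flag is real but resolves once you condition on $D_n$ throughout: given $D_n$ and the $W$ random splits at $t$, the four grandchildren are deterministic (the CART optimization is then data-determined), and the subtree events beneath them depend on disjoint blocks of $\bR_k$, giving the conditional independence you need.
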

        \begin{proof}
        Throughout the proof we assume that at level $0$ (root node) the first random split occurs, the next one is then at level $2$, and so on. Given a tree $T$ of depth $k$ and $a \in \{1,2\}^l$ for some $0 \leq l \leq k$ denote by $T^a$ the subtree starting at node $a$. We say that a branch in the tree $T^a$ is good if there exists some node in the branch which is (C1')-good. During this proof, we denote by $t = [0,1]^d$ the root node of $T$. Furthermore, we have randomization variables associated with the nodes in every second level of the tree, starting at the root node. We need some notation for the randomization up to a specific tree level: For even $l$, denote by $\bR_{0:l-2}$ the random variables associated with tree levels $0,2,\dots, l-2$.\\
        We deduce a recursive formula which will help us to prove the result. Let us fix some even $0 \leq l  \leq k-2$ and $a \in \{1,2\}^l$. Let $B_{a}$ be the event that all branches within $T^a$ are good. Below, when $l=0$ and thus $a = \emptyset$, we use the notational convention that $\PP(B_a|D_n, \bR_{0,-2} ):= \PP(B_a|D_n) = \PP(B(k)|D_n)$.\\
        \begin{align*}
            \PP&\big( B_a |D_n, \bR_{0:l-2} \big) \\
            &= \PP\big(
             B_a, \text{cell $t_a$ not (C1')-good} |D_n, \bR_{0:l-2}  \big) + \PP( B_a, \text{cell $t_a$ (C1')-good} |D_n, \bR_{0:l-2} \big) \\
             &= \PP( \text{cell $t_a$ not (C1')-good} |D_n, \bR_{0:l-2} )\PP\big( \bigcap_{\substack{i=1,2\\j=1,2} }  B_{(a_1\dots a_l, i,j)}|D_n, \bR_{0:l-2}\big)  \\
             &\qquad \qquad+ \PP( \text{cell $t_a$ (C1')-good} |D_n, \bR_{0:l-2} \big)       \\
             &= \PP( \text{cell $t_a$ (C1')-good} |D_n, \bR_{0:l-2} \big)  \\
             &\qquad + \Big(1- \PP( \text{cell $t_a$ (C1')-good} |D_n, \bR_{0:l-2} \big) \Big) \E\Big[ \prod_{\substack{i=1,2 \\ j=1,2} } \PP\big(   B_{(a_1\dots a_l, i,j)} |D_n, \bR_{0:l} \big)       \big| D_n,\bR_{0:l-2} \Big]
        \end{align*}
            Note that we used the tower property and the fact that $B_{(a_1,\dots,a_l,i,j)}$ are conditionally independent given $D_n$ and $\bR_{0:l}$. 
            Now define
            \begin{align*}
               x^*_l := \inf_{a \in \{1,2\}^{l} }\PP( B_a | D_n, \bR_{0:l-2} ),
            \end{align*}
            and $x_0^* := \PP( B(k) | D_n)$.\\
            Next, we deduce the recursive inequality. Note that, by assumption,
            \begin{equation*}
                \PP( \text{cell $t_a$ (C1')-good} |D_n, \bR_{0:l-2} \big) \geq \delta.
            \end{equation*}
            Using this fact, we have
            \begin{align*}
                \PP(B_a|D_n, \bR_{0:l-2} ) &\geq \inf_{p \geq \delta} \left( p + (1-p) \E\Big[  \prod_{\substack{i=1,2\\j=1,2}} \PP( B_{a_1,\dots,a_l,i,j} | D_n, \bR_{0:l} ) \big| D_n, \bR_{0:l-2} \Big] \right)\\
                &= \delta + (1-\delta) \E\Big[ \prod_{\substack{i=1,2\\j=1,2}} \PP( B_{a_1,\dots,a_l,i,j} | D_n, \bR_{0:l} ) \big| D_n, \bR_{0:l-2} \Big] \\
                &\geq \delta + (1-\delta) \E\Big[ {x_{l+2}^*}^4 \big| D_n, \bR_{0:l-2} \Big]
            \end{align*}
            Thus, for any $l = 0, 2, \dots, k-2$,
            \begin{align*}
                x_l^* \geq \delta + (1-\delta) \E[ {x_{l+2}^*}^4 | D_n ,\bR_{0:l-2}]
            \end{align*}
            Setting $x_l := \E[x_{k-l}^*]$ for $l = 2,\dots, k$ we get the recursive inequality
            \begin{align*}
                x_l &\geq  \delta + (1-\delta)x^4_{l-2}, l = 4,\dots, k,
            \end{align*}
            and $x_2 \geq \delta$ (by our assumption). Furthermore, $x_k= \PP(B(k))$, our quantity of interest. We now prove by induction that $x_l \geq 1-\frac{2}{l}$ for $l=2,\dots,k$ which then implies the result in view of $x_k=\PP(B(k))$. For $l=2$, the claim is trivial. Assume the inequality holds for some $l$. Then, using $\delta \geq 0.75$, the monotonicity of $x\mapsto 0.75(1+\frac{x^4}{3})$ and the induction hypothesis, we obtain
	\begin{align}\label{eq:recursion}
			x_{l+2} \geq \frac{3}{4}(1+ \frac{1}{3} x_l^4) \geq \frac{3}{4}(1+ \frac{1}{3}(1-\frac{2}{l})^4) &= \frac{1}{4}\big( 3+ (1-\frac{2}{l})^4 )\\
   &\notag= \frac{1}{4}\big( 4 -\frac{8}{l} + \frac{24}{l^2} - \frac{32}{l^3} + \frac{16}{l^4} \big) \\
   &\notag= 1 - \frac{2}{l} + \frac{6}{l^2} - \frac{8}{l^3} + \frac{4}{l^4} \\
   &\notag=: 1- A_l \\
   &\notag\geq 1- \frac{2}{l+2},
		\end{align}
		since it is easily checked that $A_l \leq \frac{2}{l+2}$ for all $l\geq 1$. Thus, the inequality holds for $l+2$.
            
        \end{proof}

        	\noindent The next lemma states that \Cref{cond:C6'} holds with high probability under \Cref{cond:c1'}. It is based on \Cref{lemma:goodsplits_everybranch} above.
	\begin{lemma}\label[lemma]{lemma:C1impliesC6}
                 Let $T$ be a tree growing rule (based on $D_n$) with associated randomization $\bR_k$ (see \Cref{subsec:treegrowingrules} for the definition of $\bR_k$) and assume the tree is grown up to depth $k$. Assume that \Cref{cond:c1'} holds and let $M \in \N$. Then, there exists an event $A$ with $\PP(A) \to 1$, as $k\to \infty$, such that on $A$, any tree branch contains at least $M$ (C1')-good cells.
        \end{lemma}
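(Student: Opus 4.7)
The plan is to prove the result by induction on $M$, adapting the recursive argument from \Cref{lemma:goodsplits_everybranch}. For $M = 1$, the claim follows directly from \Cref{lemma:goodsplits_everybranch}, since $1 - 2/k \to 1$ as $k \to \infty$. For the inductive step, assume the result for $M-1$. I would introduce $B^{(M)}_a$, the event that every branch within the subtree $T^a$ contains at least $M$ \ref{cond:c1'}-good cells, along with the conditional probability $x^{*,M}_l := \inf_{a \in \{1,2\}^l} \PP(B^{(M)}_a \mid D_n, \bR_{0:l-2})$, in analogy with the key quantity in the proof of \Cref{lemma:goodsplits_everybranch}.

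Decomposing $B^{(M)}_a$ according to whether the cell $t_a$ is \ref{cond:c1'}-good, and using the same factorization as in the proof of \Cref{lemma:goodsplits_everybranch} together with the conditional independence of the four grandchild subtrees given $(D_n, \bR_{0:l})$, one obtains
\[
  x^{*,M}_l \geq \delta\, \E\!\left[(x^{*,M-1}_{l+2})^4 \,\big|\, D_n, \bR_{0:l-2}\right] + (1-\delta)\, \E\!\left[(x^{*,M}_{l+2})^4 \,\big|\, D_n, \bR_{0:l-2}\right].
\]
Here the coefficient $\delta$ captures the probability that $t_a$ is good, which contributes one good cell per branch and leaves $M-1$ more to be found in each grandchild subtree; if $t_a$ is not good, each grandchild subtree must contribute $M$ good cells per branch. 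Taking expectations and applying Jensen's inequality yields the closed recursion
\[
  y^M_l \geq \delta\, (y^{M-1}_{l+2})^4 + (1-\delta)\,(y^M_{l+2})^4, \qquad y^M_l := \E[x^{*,M}_l],
\]
with the trivial base case $y^M_l = 0$ for $l > k - 2M$ and a positive base value $y^M_{k-2M} > 0$ obtained from conditional independence of the goodness events on the $(4^M-1)/3$ cells in the depth-$2M$ subtree.

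Iterating this recursion upward from $l = k - 2M$ to $l = 0$, the inductive hypothesis supplies $y^{M-1}_{l+2} \to 1$ as $k \to \infty$, which drives the source term $\delta\, (y^{M-1}_{l+2})^4$ toward $\delta$ and forces $y^M_0 = \PP(B^{(M)}_\emptyset) \to 1$. Setting $A := B^{(M)}_\emptyset$ yields the desired event. The main obstacle lies in the rate analysis of this iteration: since $\delta \geq 3/4$ only ensures the linear coefficient $4(1-\delta) \leq 1$, a first-order linearization around $y = 1$ is not strictly contractive. Consequently, the argument must exploit the quadratic correction in $(1-u)^4 = 1 - 4u + 6u^2 - \ldots$ to drive $1 - y^M_l \lesssim 1/(k-l)$ to zero, in direct analogy with the final computation $x_l \geq 1 - 2/l$ at the end of the proof of \Cref{lemma:goodsplits_everybranch}.
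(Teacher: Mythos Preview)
Your inductive framework and the recursion
\[
y^M_l \;\geq\; \delta\,(y^{M-1}_{l+2})^4 + (1-\delta)\,(y^M_{l+2})^4
\]
are set up correctly (though when you replace the actual goodness probability $p_a\geq\delta$ by $\delta$ you implicitly use $y^{M-1}_{l+2}\geq y^{M}_{l+2}$, which holds because $B_a^{(M)}\subseteq B_a^{(M-1)}$). The gap is in your proposed resolution of the rate obstacle. You claim the quadratic correction in $(1-u)^4$ will yield $1-y^M_l\lesssim 1/(k-l)$ ``in direct analogy'' with the $M=1$ computation. It will not: if by induction $1-y^{M-1}_{l+2}\leq C_{M-1}/(k-l-2)$, then the source term coming from $\delta(y^{M-1}_{l+2})^4$ contributes an error of order $C_{M-1}/(k-l)$ at every step, and summing over $\Theta(k)$ steps gives a contribution of order $\log k$, which the quadratic term $\tfrac{3}{2}\eta_{l+2}^2$ (of order $1/(k-l)^2$ under your ansatz) cannot absorb. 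So the $1/(k-l)$ rate does not propagate from $M-1$ to $M$.

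Your recursion can still be closed, but by a different mechanism: fix $\epsilon>0$, use the inductive hypothesis only in the weak form $y^{M-1}_{l+2}\geq 1-\epsilon$ for $k-l$ large enough, and iterate $y^M_l\geq g_\epsilon(y^M_{l+2})$ with $g_\epsilon(y)=\delta(1-\epsilon)^4+(1-\delta)y^4$. This map has a unique fixed point $y^*_\epsilon\in(0,1)$ with $y^*_\epsilon\to 1$ as $\epsilon\to 0$, and the iterates from $0$ increase to $y^*_\epsilon$; letting $k\to\infty$ and then $\epsilon\to 0$ gives $y^M_0\to 1$.

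The paper sidesteps the recursion entirely with a much shorter layering argument. For $M=2$, split depth $k$ into $k_1+k_2$ and observe
\[
\PP(\text{$\geq 2$ good cells in every branch})\;\geq\;\Bigl(1-\tfrac{2}{k_1}\Bigr)\Bigl(1-\tfrac{2}{k_2}\Bigr)^{2^{k_1}},
\]
by applying \Cref{lemma:goodsplits_everybranch} once to the top layer and once to each of the $2^{k_1}$ subtrees at depth $k_1$, using independence of the goodness decisions across cells. Choosing $k_1,k_2\to\infty$ with $k_2/2^{k_1}\to\infty$ makes the right-hand side tend to $1$; the general $M$ follows by repeating the layer decomposition. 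This avoids any rate bookkeeping and is what you should compare your argument against.
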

	\begin{proof}
Let $k_3 \in \N$ and $\varepsilon > 0$ arbitrary. Let $k_1, k_2$ be such that $k_1+k_2 = k_3$. Denote the root cell of $T$ by $t$ in this proof. Let $A_{l_1, l_2}$ be the collection of subtrees of $T$ starting from the cell $t_{a}$ for some $a \in \{1,2\}^{l_{1}}$ and being of depth $l_2$ (for example, $A_{0,k}$ contains solely the original tree $T$, $A_{2, k-1}$ contains the two trees starting at $t_1$ and $t_2$ with with same leafs as $T$, etc.). Then
\begin{align*}
    (*) :&= \PP( \text{at least $2$ (C1')-good cells in any tree branch of $T$} ) \\ &\geq \PP( \text{the tree in $A_{0,k_1}$ contains at least one (C1')-good cell,} \\ &\qquad \qquad \text{any tree in $A_{k_1, k_2}$ contains at least one (C1')-good cell}) \\ &\geq \left( 1- \frac{2}{k_1} \right) \left( 1- \frac{2}{k_2} \right)^{2^{k_1}}
\end{align*}
where we used \Cref{lemma:goodsplits_everybranch} and the fact that the decisions whether a cell $t$ is (C1')-good or not are made independently. We can choose $k_1$ and $k_2$ (depending on $k_3$) such that $k_2/2^{k_1}$ and $k_1$ tend towards infinity for $k_3\to\infty$ and thus, $k_3$ can be chosen large enough such that $(*) \geq 1-\varepsilon$. The statement from the Lemma can be deduced by repeating this argument.
	\end{proof}
 \subsection{Proofs of \Cref{thm:C5-fulfilled,thm3} }\label{subsec:proof_C5-fulfilled_thm3}
  \begin{proofname}{Proof of \Cref{thm3}}
    The proof of this theorem makes use of the basic arguments in the proof of Theorem 3 of \citet{Chi} but the random nature of our SID condition needs some additional steps and arguments. Recall that $\rootcell$ denotes the root cell and the tree is grown until depth $k$. It holds that
    		\begin{align*}
    			\E \big(m(X) - m^*_T(X) \big)^2 
       = \sum_{ a \in \{1,2\}^k } \PP(X \in \rootcell_{a})\var( m(X) | X\in \rootcell_{a} ).
    		\end{align*}
    	Let us fix some $a \in \{1,2\}^k$ and thus some tree branch $ (\rootcell, \rootcell_{a_1}, \dots \rootcell_{a_1\dots a_k} )$ of depth $k$. By \ref{cond:C6'}, for each such $a$, there exist $M$ distinct indices $g_1(a) < \dots < g_M(a) \in \{0,2,\dots, k-2\}$ such that the inequality in \Cref{cond:c1'} holds true for the cell at level $g_j(a)$, $j=1,\dots,M$. Within this proof, we call the indices $g_j(a)$, $j=1,\dots,M$ $(C1')$-good. Note that, without loss of generality, it can be assumed that if an index $g$ is among the $(C1')$-good indices for some $a \in \{1,2\}^k$, then this $g$ is also among the $(C1')$-good indices for all tuples $\tilde{a}$ whose first $g$ entries coincide with $a$. Specifically, if $g_M(a) = k-2$, then $g_M(\tilde{a}) = k-2$, too, for the four tuples $\tilde{a} \in\{1,2\}^k$ whose first $k-2$ entries coincide with $a$ (in other words, the corresponding tree branches coincide with each other up to depth $k-2$). Let us distinguish different cases. \\
    		\textbf{Case 1: It holds that $g_M(a)=k-2$ and for the cell $t=\rootcell_{a_1,\dots,a_{k-2}}$ it holds that}
      \begin{equation*}
          \sup_{w, \bc_1,\bc_2}\zwei_{t}( \bc^w | \bc_1, \bc_2) > \alpha_2 \varepsilon.
      \end{equation*}
      Then, by \Cref{cond:C5'}, it holds that $k \notin L_{\varepsilon, a}$. Again, by \Cref{cond:C5'}, this implies
    		\begin{align*}
    		 \sup_{w,\bc_1,\bc_2}\zwei_t( \bc^w| \bc_1, \bc_2) \leq \alpha_2 \zwei(t; t_{\pp} ).
    		\end{align*}
    		Thus, by $k-2$ being a $(C1')-$good index,
    		\begin{align*}
    		\begin{split}
    			\eins( t; t_{\pp} ) &= \var(m(X) |X\in t) - \zwei(t;t_{\pp}) \\
    			&\leq \var(m(X) | X\in t) - \alpha_2^{-1} \sup_{w, \bc_1,\bc_2}\zwei_t( \bc^w | \bc_1,\bc_2 ) \\
    			&\leq \var(m(X) |X\in t) (1-(\alpha_1\alpha_2)^{-1} ).
    		\end{split}
    		\end{align*}
    		Then, using the formula from \Cref{remark:eins_plus_zwei},\begin{align}\label{eq:improvementbound}
                 \begin{split}
    			\sum_{\substack{j=1,2\\l=1,2}} \PP(X\in t_{j,l})\var(m(X)|X\in t_{j,l}) &=  \PP(X\in t) \sum_{\substack{j=1,2\\l=1,2}}  \PP(X\in  t_{j,l} | X\in t ) \var( m(X) \in  t_{j,l}) \\&= \PP(X\in t) \eins(t; t_{\pp} ) \\& \leq \PP(X\in t) \var(m(X)| X\in t) (1-(\alpha_1\alpha_2)^{-1}).
                 \end{split}
    		\end{align}
    		\textbf{Case 2: $k$ is such that $g_M(a) = k-2$ and for $t=\rootcell_{a_1,\dots,a_{k-2}}$ it holds that}
                \begin{equation*}
                  \sup_{w,\bc_1,\bc_2}\zwei_t( \bc^w| \bc_1, \bc_2 ) \leq \alpha_2 \varepsilon.
              \end{equation*}
              Then,
    		\begin{align*}
    			\var(m(X)| X\in t) \leq \alpha_1 \alpha_2 \varepsilon
    		\end{align*}
    		since the cell at level $k-2$ is $(C1')$-good. Hence,
    		\begin{align}\label{eq:alphabound}
    		\begin{split}
    			\sum_{\substack{j=1,2\\l=1,2}} \PP(X\in t_{j,l})\var(m(X)|X\in t_{j,l})  &= \PP(X \in t)  \sum_{\substack{j=1,2\\l=1,2} } \PP(X \in t_{j,l} | X \in t )\var(m(X) | X \in t_{j,l})  \\&= \PP(X \in t) \eins( t; t_{\pp} )  \\&\leq \PP(X \in t) \alpha_1\alpha_2 \varepsilon. 
    		\end{split}
    		\end{align}
    		\textbf{In any case,} we always have the following simple bound which we will use from time to time. Let $t$ be some cell and $t_{j,l}$ its grand daughter cells. Then,
    		\begin{align}\label{eq:simplebound}
    			\begin{split}
    				\sum_{\substack{j=1,2\\l=1,2}} \PP(X \in t_{j,l} )\var(m(X)| X\in t_{j,l}) &= \PP(X\in t) \sum_{\substack{j=1,2\\l=1,2}} \PP(X \in t_{j,l} | X \in t )\var(m(X)| X\in t_{j,l}) \\
    				&=  \PP(X\in t) \eins( t; t_{\pp}  ) \\&\leq \PP(X\in t) \var(m(X)| X\in t),
    			\end{split}
    		\end{align}
     in view of \Cref{remark:eins_plus_zwei}. Recall that the tree $T$ is of depth $k$ and is identified with the set of $k-$tuples $\{1,2\}^k$. For $l\leq k$, let $T(l) := \{1,2\}^l$. The set $T(l)$ corresponds to the subtree of $T$ of depth $l$ with the same root $\rootcell$ as $T$. Furthermore, denote by $T_{\varepsilon}(l)$ the following set
        \begin{align*}
             a &\in T_{\varepsilon}(l) :\iff a \in \{1,2\}^l \text{ and }\\&\text{there exists an integer $g$ from the set of indices of all $(C1')$-good splits such that:} \\ 
       &\qquad \text{For the cell $t = \rootcell_{a_1\dots a_{g}}$ at depth $g$, it holds
                 } \sup_{w, \bc_1,\bc_2} \zwei_t( \bc^w| \bc_1, \bc_2) \leq \alpha_2 \varepsilon. 
        \end{align*}
         We distinguish in the following between tree branches corresponding to $T_\varepsilon(l)$ and those corresponding to $T^{\dagger}(l) := T(l)\setminus T_{\varepsilon}(l)$.\\
        \begin{align*}
    			\sum_{ a \in \{1,2\}^k }&\PP( X \in \rootcell_a ) \var(m(X) | X \in \rootcell_a ) \\
    			&= \Big( \sum_{\substack{ a  \in T^{\dagger}(k) \\ g_M(a) < k-2 }}+ \sum_{\substack{a \in T^{\dagger}(k) \\ g_M(a) = k-2 }}  \Big) \PP( X \in \rootcell_{a}) \var(m(X) | X \in \rootcell_{a} ) \\
      &\hspace{7cm} + \underbrace{\sum_{\substack{a \in T_{\varepsilon}(k) }} \PP( X \in \rootcell_{a}) \var(m(X) | X \in \rootcell_{a} )}_{=:A}\\
    			&\leq  \sum_{ \substack{a\in T^{\dagger}(k-2)  \\ M \text{ $(C1')$-good splits}\\ \text{splits in branch}\\\text{corresponding to $a$} } } \PP(X\in \rootcell_{a}) \var(m(X)| X\in \rootcell_{a}) \\
    			&\qquad + (1-(\alpha_1\alpha_2)^{-1})  \sum_{ \substack{a \in T^{\dagger}(k-2) \\ M-1 \text{ $(C1')$-good}\\ \text{splits in branch}\\\text{corresponding to $a$} }} \PP(X\in \rootcell_a) \var(m(X)| X\in \rootcell_a ) \\ 
    			&\qquad + A
        \end{align*}
    For the first summand, we employed \eqref{eq:simplebound}. For the second one, we employed \eqref{eq:improvementbound} from case 1. Note that we have used two facts: First, if $a \in T(k) \setminus T_{\varepsilon}(k)$, then all four tree branches which conincide with the branch corresponding to $a$ up to level $k-2$ are element of $T(k)\setminus T_{\varepsilon}(k)$. Furthermore, if $a \in T(k)\setminus T_{\varepsilon}(k)$, then for each $l \leq k$, $(a_1,\dots, a_l) \in T(l)\setminus T_{\varepsilon}(l)$. We shall treat te summand $A$ further below. \\
    To the first two summands in the last expression, we can iteratively apply the same argument as above by distinguishing again the cases, if in a tree branch there is a $(C1')$-good split at the current depth or not. As there are $M$ $(C1')$-good splits in any tree branch, we can iterate this until we have discovered all $(C1')$-good splits. More formally, we repeat the argument until the trees considered are of depth $m^*$ where $m^* := \min\{ g_1(a) : a \in \{1,2\}^k \}$. Then, we are left with a summand of the form
    \begin{align*}
        \sum_{a  \in T^{\dagger}(m^*)}(1-(\alpha_1\alpha_2)^{-1})^M \PP(X\in t_a)\var(m(X)| X\in t_a)
    \end{align*}
    which is bounded above by $\var(m(X) )(1-(\alpha_1\alpha_2)^{-1})^M$.\\
    It remains to bound $A$. The argument is similar to \cite[page 9 of the supplementary material]{Chi}. We give a proof for the sake of completeness. Fix some $a \in \{1,2\}^k$ with the property that $a \in T_{\varepsilon}(k)$. Then, we can choose a smallest $l\in \{ g_1(a),\dots, g_M(a) \}$ such that for $t = \rootcell_{a_1\dots a_l}$ it holds
    \begin{align*}
        \sup_{w,\bc_1,\bc_2} \zwei_t( \bc^w| \bc_1,\bc_2) \leq \alpha_2 \varepsilon.
    \end{align*}
    Let us denote by $S^*$ the set of tuples in $T_{\varepsilon}(k)$ such that the first $l$ cells are corresponding to $a^* := (a_1,\dots, a_{l})$. Then, as in \eqref{eq:alphabound}, and writing $a^*b$ for the concatenation $a^*b=(a^*_1,\dots,a^*_l, b_1,\dots,b_{k-l})$ of $a^*$ and $b$,
    \begin{align*}
        \sum_{a\in S^*} &\PP(X \in \rootcell_a ) \var( m(X) |X \in \rootcell_a) \\&= \sum_{b \in \{1,2\}^{k-l} } \PP(X \in \rootcell_{a^*b})\var(m(X)|X\in \rootcell_{a^*b}) \\
        &= \PP(X\in \rootcell_{a^*}) \sum_{b\in\{1,2\}^{k-l} }\PP(X \in \rootcell_{a^*b} |X \in \rootcell_{a^*} )\var(m(X) | X \in \rootcell_{a^*b}) \\
        &\leq \PP(X \in \rootcell_{a^*}) \var(m (X) |X \in \rootcell_{a^*} )&& (\text{by \Cref{remark:eins_plus_zwei}}) \\
        &\leq \PP(X \in \rootcell_{a^*}) \alpha_1\alpha_2 \varepsilon.
    \end{align*}
    In the last step, we used that the cell is $(C1')$-good, due to the choice of $l$. Now, observe that for the set of tuples in $T_{\varepsilon}(k) \setminus S^*$, the same argument can be made, yielding another set of the form $S^*$. Finally, we can sum over all such sets of the form $S^*$, noticing that these are pairwise disjoint, and have thus $A \leq \alpha_1\alpha_2\varepsilon$.
\end{proofname}
\begin{proofname}{Proof of \Cref{thm:C5-fulfilled}}
The proof is analogous to the proof of \citet[Theorem 4]{Chi}.\\

    \end{proofname}
\subsection{Details on the Proof of \Cref{thm:main3}}\label{subsec:proof-main3}
\Cref{thm:main3} can be proven using the same arguments as for the proof of \Cref{thm:main} and we omit the details. However, let us point out some differences that needs to be taken into account regarding the approximation theory of \citet{Chi}. Since the collection $\mathcal{C}$ in \Cref{thm:main3} contains not only rectangles, a closer look needs to be taken to the approximation theory developed by \citet[Section A.1]{Chi} and to the proof of the statement in \Cref{lemma:error2} \citep[compare][Lemma 2]{Chi}. In fact, this affects the use of the grid from \Cref{def:grid}. The grid $G_n$ is used to bound suprema over tree growing rules, by reducing them to maxima over tree growing rules. Given a set $t$, recall the definition of $t^{\#}$. Let $\mathcal{G}_{n,k}$ be all possible cells $t^{\#}$ where $t$ is any cell that can be built through $k$ iterations of the algorithm. By \Cref{cond:T_dim} assumed \Cref{thm:main3}, the cardinality of $\mathcal{G}_{n,k}$ is bounded by $(Bn^{\beta})^{k}$. This bound is different than the one in equation (A.8) of \citet[supplement, p.2]{Chi}, however the arguments used in equation (A.115) and the following lines are not affected and thus, the first statement of Lemma 7 of \citet{Chi} remains valid for $\mathcal{G}_{n,k}$ (instead of $G_{n,k}$ in \citealp[~p.2, supplement]{Chi}). The same is easily verified for the other two statements of Lemma 7 of \citet{Chi}. Note that we have the following proposition.
\begin{proposition}\label[proposition]{prop:approx_number_datapoints}
    Let $\epsilon_2 > 0$ and $\mathcal{H}_n$ be a collection as in \Cref{cond:T_boundary} satisfying the first property therein. Furthermore, assume that $X_1,\dots,X_n$ are i.i.d. satisfying \ref{cond:C2}. Let \begin{align*}
        \sA := \bigcup_{H \in \mathcal H_n} \big\lbrace \text{the number of indices $i$ with }X_i \in \bigcup_{\bq \in H} B_{\bq} \text{ is at least } (\log n)^{1 +\epsilon_2} \big\rbrace.
    \end{align*}
    Then, $\PP(\sA) \leq  \# \mathcal H_n n^{-\epsilon (\log n)^{1+\epsilon_2} }$, where $\epsilon>0$ comes from the  grid definition in \Cref{def:grid}.
\end{proposition}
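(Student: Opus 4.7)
The plan is to combine a union bound over $\mathcal{H}_n$ with a standard Binomial tail estimate, exploiting that each set $\bigcup_{\bq \in H} B_\bq$ has Lebesgue measure at most of order $g_n^{-1}$.

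First, I would fix $H \in \mathcal{H}_n$ and estimate $p_H := \PP(X_i \in \bigcup_{\bq \in H} B_\bq)$. By Condition \ref{cond:C2}, the density $f$ is bounded above by some constant $f_{\max}$. Since each box $B_\bq$ has Lebesgue measure $g_n^{-d}$ and the first property in \Cref{cond:T_boundary} (as assumed in the statement) gives $\# H \leq C g_n^{d-1}$, the set $\bigcup_{\bq \in H} B_\bq$ has Lebesgue measure at most $C g_n^{-1}$. Consequently $p_H \leq f_{\max} C g_n^{-1} \leq f_{\max} C n^{-(1+\epsilon)}$, using the definition $g_n = \lceil n^{1+\epsilon} \rceil$ from \Cref{def:grid}. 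In particular $n p_H \leq f_{\max} C n^{-\epsilon}$, which tends to $0$.

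Next, the number of indices $i$ with $X_i \in \bigcup_{\bq \in H} B_\bq$ is $\text{Bin}(n, p_H)$ by independence (Condition \ref{cond:C3}). Setting $k_n := \lceil (\log n)^{1+\epsilon_2} \rceil$, I would apply the standard bound $\PP(\text{Bin}(n,p) \geq k) \leq \binom{n}{k} p^k \leq (n p)^k / k!$, together with Stirling's $k! \geq (k/e)^k$, to obtain
\begin{align*}
    \PP\bigl(\text{Bin}(n, p_H) \geq k_n\bigr) \leq \left( \frac{e n p_H}{k_n} \right)^{k_n} \leq \left( \frac{e f_{\max} C n^{-\epsilon}}{k_n} \right)^{k_n}.
\end{align*}
For all sufficiently large $n$, the constant $e f_{\max} C / k_n$ is at most $1$ (since $k_n \to \infty$), so the right-hand side is bounded by $n^{-\epsilon k_n} \leq n^{-\epsilon (\log n)^{1+\epsilon_2}}$.

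Finally, a union bound over the (at most $\# \mathcal{H}_n$) choices of $H$ gives $\PP(\mathcal{A}) \leq \# \mathcal{H}_n \cdot n^{-\epsilon (\log n)^{1+\epsilon_2}}$, as claimed. There is no real obstacle here; the only point that requires a moment's care is ensuring that the polylogarithmic factor $1/k_n$ is absorbed into the polynomial decay $n^{-\epsilon}$, which holds for all large $n$ since any positive power of $n$ eventually beats any polylogarithmic constant. (The statement is purely asymptotic in $n$, so handling the small-$n$ regime is not needed.)
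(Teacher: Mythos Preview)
Your proposal is correct and matches the paper's approach. The paper's own proof is extremely terse: it records the Lebesgue measure bound $\sup_{H\in\mathcal H_n}\lambda\bigl(\bigcup_{\bq\in H}B_{\bq}\bigr)\leq C g_n^{-1}$ and then defers the binomial tail plus union-bound computation to \citet[equations (A.4)--(A.6)]{Chi}; your write-up simply spells out exactly that deferred argument.
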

\begin{proof}
    Let $\lambda$ denote the $d$-dimensional Lebesgue measure. Note that \[\sup_{H\in\mathcal H_n}\lambda( \bigcup_{\bq\in H}B_{\bq}) \leq C g_n^{-1}.\]Here, $C$ is the constant from \Cref{cond:T_boundary}. This statement follows readily with the same calculations as in \citet[equations (A.4)-(A.6)]{Chi}.
\end{proof}
By the conditions in \ref{cond:T_boundary}, such an $\mathcal{H}_n$ exists with $\# \mathcal H_n \leq n^{\beta'}$ where $\beta' > 0$. Observe, that for $t'\subseteq t$, $t' \Delta t'^{\#} = \bdiff( t,t') \cup (t \Delta t^{\#})$ and the union is disjoint. Furthermore, note that $([0,1]^d)^{\#} = [0,1]^d$. From these properties, it is easy to deduce that, uniformly over all $t$ that can be obtained through $k$ iterations of the algorithms, $\lambda (t \Delta t^{\#}) \leq C k g_n^{-1}$ where $\lambda$ denotes the Lebesgue measure, and $C$ is the constant from \Cref{cond:T_boundary}. This is the property required in equation (A.2) of \citet{Chi}. Additionally by \Cref{prop:approx_number_datapoints}, it can be checked that on event $\mathcal{A}^c$, uniformly over all $t$ obtained through $k$ iterations, the number of $x_i$'s in $t\Delta t^{\#}$ is bounded by $k (\log n )^{1 +\varepsilon_2}$. This is equation (A.7) of \citet[supplement, p.2]{Chi}. Furthermore, it holds that $n\PP(\sA^c) \to 0$ as $n\to \infty$.\\
It follows that the statement of \Cref{lemma:error2} and the approximation theory developed by \citet{Chi} remain true under the setup and the assumptions of \Cref{thm:main3}.\\
We note that the choice of $\delta \geq 1 - L^{-1}$ in our probabilistic SID is needed to prove the following statement: The probability $p_k$ that a depth $k$-tree (where in each step, a cell is split into $L$ cells) contains at least one good split in any branch of the tree, converges to $1$. To see this, suppose without loss of generality that $\delta = 1-L^{-1}$. As in the proof for \Cref{lemma:goodsplits_everybranch}, one can deduce the recursive formula $p_k = \delta + (1-\delta)p_{k-1}^L$ and $p_1=\delta$. Then, using the same calculations as in \eqref{eq:recursion} one can prove inductively that $p_k \geq 1-k^{-1}$ for $(L \geq 3)$ and $p_k \geq 1-2k^{-1}$ for $L=2$, in view of the following Lemma.
	\begin{lemma}
		 Suppose $L \in \N$, $L \geq 3$ and let $f,g:\R \to \R$, $f(x) := \frac{1}{x+1} - \frac{1}{L}\left( 1- \left(1-\frac{1}{x} \right)^L \right)$ and $g(x) := \frac{2}{x+1} - \frac{1}{2}\left( 1- \left(1-\frac{2}{x} \right)^2 \right)$. Then $f(x) \geq 0$, $g(x) \geq 0$, for all $x \geq 1$.
	\end{lemma}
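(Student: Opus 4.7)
The plan is to handle the two functions separately, because $g$ collapses to a rational expression while $f$ requires a short analytic argument. For $g$, I would simply expand $(1-2/x)^2 = 1 - 4/x + 4/x^2$, so $\tfrac{1}{2}\bigl(1-(1-2/x)^2\bigr) = 2(x-1)/x^2$, and then combine with the first term:
\begin{align*}
g(x) = \frac{2}{x+1} - \frac{2(x-1)}{x^2} = \frac{2\bigl[x^2 - (x-1)(x+1)\bigr]}{x^2(x+1)} = \frac{2}{x^2(x+1)} \geq 0
\end{align*}
for every $x>0$, in particular for $x\geq 1$. No inequality estimates are needed for $g$.

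For $f$, the cleanest route is the substitution $u:=1/x\in(0,1]$ and the study of $h(u):=f(1/u) = \tfrac{u}{1+u} - \tfrac{1}{L}\bigl(1-(1-u)^L\bigr)$. One checks immediately that $h(0)=0$, so it is enough to prove $h'(u)\geq 0$ on $(0,1)$, after which $h(u)\geq h(0)=0$ on $[0,1]$ follows by integration. Differentiating gives
\begin{align*}
h'(u) = \frac{1}{(1+u)^2} - (1-u)^{L-1},
\end{align*}
so the task reduces to the elementary inequality $(1+u)^2(1-u)^{L-1}\leq 1$ for $u\in(0,1)$ and $L\geq 3$.

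I would establish this inequality by taking logarithms and invoking the standard bounds $\log(1+u)\leq u$ and $\log(1-u)\leq -u$ on $(0,1)$, which yield
\begin{align*}
2\log(1+u) + (L-1)\log(1-u) \leq 2u - (L-1)u = (3-L)\,u \leq 0
\end{align*}
whenever $L\geq 3$. Exponentiating concludes the argument. I do not foresee any real obstacle: the only subtlety is the boundary case $L=3$, where the estimate $(3-L)u\leq 0$ is tight and $h'$ vanishes at the endpoints of $(0,1)$ but stays non-negative in between, which is all that monotonicity of $h$ requires. A direct check at $x=1$, namely $f(1)=\tfrac{1}{2}-\tfrac{1}{L}\geq \tfrac{1}{6}$ for $L\geq 3$, confirms strict positivity there.
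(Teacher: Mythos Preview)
Your proof is correct and follows essentially the same route as the paper: both arguments reduce $f\geq 0$ to the monotonicity of $f$ (equivalently of $h(u)=f(1/u)$) together with the boundary value at infinity (your $h(0)=0$), and both land on the key inequality $(1+u)^2(1-u)^{L-1}\leq 1$ for $u\in(0,1)$. The only cosmetic difference is that the paper verifies this inequality by the algebraic factorisation $(1-u)^{L-3}(1-u^2)^2\leq 1$, whereas you use the logarithmic bounds $\log(1\pm u)\leq \pm u$; for $g$ your explicit computation is exactly the ``simple calculations'' the paper alludes to.
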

	\begin{proof}
		Observe that the derivative $f'$ is given by
		\begin{align*}
			f'(x) = \frac{\left(1-\frac{1}{x}\right)^{L-1}}{x^2} - \frac{1}{(x+1)^2}.
		\end{align*}
		It holds that $f(1) = \frac{1}{2} - \frac{1}{L} > 0$, and $f(x) \to 0$, $x\to \infty$. The claim follows from $f'(x) \leq 0$ for all $x \geq 1$. Let $\alpha = L-1$. For $x \geq 1$,
		\begin{align*}
			f'(x) \leq 0 \iff \frac{(x-1)^{\alpha}(x+1)^2 }{x^{\alpha +2}} \leq 1.
		\end{align*}
		Observe that
		\begin{align*}
			 \frac{(x-1)^{\alpha}(x+1)^2 }{x^{\alpha +2}} &= \left(  1- \frac{1}{x} \right)^{\alpha -2}  \left[ \left( 1  - \frac{1}{x} \right) \left( 1 + \frac{1}{x} \right) \right]^2 \\&= 		 \left(  1- \frac{1}{x} \right)^{\alpha -2}  \left( 1-\frac{1}{x^2} \right)^2 < 1.
		\end{align*}
		The other inequality follows from simple calculations.
	\end{proof}
\subsection{Proof of \Cref{prop:example}}\label{sec:proof_example}
In this section, we prove the two statements of \Cref{prop:example}. Recall that $\zwei_{t}(j,c)$ denotes the ($1$-step) decrease in impurity when splitting a cell $t$ at coordinate $j$ and position $c$. We need the following lemma.
    	\begin{lemma}\label[lemma]{lemma:stuff}
		Let $t := [a_1,a_2] \times [b_1,b_2] \times [c_1,c_2] \subseteq [0,1]^3$, $m: [0,1]^3\to \R, m(x)=(x_1-0.5)(x_2-0.5)+x_3$ and suppose that $X$ is uniformly distributed on $[0,1]^3$. It holds that
		\begin{align}
			&\E\big[(X_1-0.5)(X_2-0.5) | X_1 \in [a_1,a_2], X_2 \in [b_1,b_2] \big] = \frac{1}{4}(a_2+a_1 - 1)(b_2+b_1-1)\label{eq:ex-expectation12},\\
			&\E\big[X_3 | X_3 \in [c_1,c_2] \big] = \frac{c_1+c_2}{2}, \label{eq:ex-expectation3} \\
			&\var(m(X) | X \in t) = \frac{H(a_1,a_2)H(b_1,b_2)}{9} - \frac{1}{16} (a_2+a_1-1)^2(b_2+b_1-1)^2 + \frac{(c_2-c_1)^2}{12}, \label{eq:ex-var}\\
			&\hspace*{3cm}\text{where }H(x,y )=(x-0.5)^2+(x-0.5)(y-0.5) + (y-0.5)^2,\notag\\
			&\zwei_t( 3,c ) = \frac{(c-c_1)(c_2-c)}{4}, \label{eq:ex-II-coord3} \\
			&\zwei_t( 1,c ) = \frac{(c-a_1)(a_2-c)}{16} (b_1+b_2-1)^2, \label{eq:ex-II-coord1}\\
			&\sup_c \zwei_t (1,c)  = \frac{(a_2 - a_1)^2}{64} (b_1 + b_2 -1)^2, \label{eq:ex-II-coord1-max} \\
			&\sup_c \zwei_t (2,c)  = \frac{(b_2 - b_1)^2}{64} (a_1 + a_2 -1 )^2, \label{eq:ex-II-coord2-max}\\
			&\sup_c \zwei_t(3,c)  = \frac{(c_2-c_1)^2}{16}. \label{eq:ex-II-coord3-max}
		\end{align}
        In \Cref{eq:ex-II-coord1-max,eq:ex-II-coord2-max,eq:ex-II-coord3-max} the supremum is attained for the midpoints of the cell boundary in coordinate $1$, $2$ and $3$, respectively.
	\end{lemma}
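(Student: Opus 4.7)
The plan is to exploit the product structure of $m$ together with the independence of the coordinates under the uniform distribution. Since $X_1,X_2,X_3$ are independent and uniform on $[0,1]$, conditioning on $X\in t = [a_1,a_2]\times[b_1,b_2]\times[c_1,c_2]$ makes $X_1$, $X_2$, $X_3$ conditionally independent and uniform on $[a_1,a_2]$, $[b_1,b_2]$, $[c_1,c_2]$ respectively. I would state this independence once at the start and use it throughout.

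For \eqref{eq:ex-expectation12} and \eqref{eq:ex-expectation3}, I would factor the conditional expectation across the independent coordinates: $\E[(X_1-0.5)(X_2-0.5)\mid\cdot]=\E[X_1-0.5\mid X_1\in[a_1,a_2]]\cdot\E[X_2-0.5\mid X_2\in[b_1,b_2]]$ and evaluate each factor as the midpoint minus $0.5$, yielding $(a_1+a_2-1)/2$ and $(b_1+b_2-1)/2$. For \eqref{eq:ex-expectation3} the midpoint formula is immediate.

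For the variance identity \eqref{eq:ex-var}, I would first split $\var(m(X)\mid X\in t)=\var((X_1-0.5)(X_2-0.5)\mid\cdot)+\var(X_3\mid X_3\in[c_1,c_2])$ using conditional independence of $X_3$ from $(X_1,X_2)$. For the $X_3$ part, the uniform variance on an interval gives $(c_2-c_1)^2/12$. For the product part, I would use $\var(UV)=\E[U^2]\E[V^2]-\E[U]^2\E[V]^2$ with $U=X_1-0.5$, $V=X_2-0.5$ (independence of $U,V$ given the cell). The key computation is $\E[(X_1-0.5)^2\mid X_1\in[a_1,a_2]]=\tfrac{1}{3(a_2-a_1)}\bigl[(a_2-0.5)^3-(a_1-0.5)^3\bigr]=H(a_1,a_2)/3$, where the cubic identity $p^3-q^3=(p-q)(p^2+pq+q^2)$ produces the symmetric quadratic $H$. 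Multiplying then subtracting the squared expectation from \eqref{eq:ex-expectation12} gives exactly the claimed form.

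For the impurity-decrease formulas \eqref{eq:ex-II-coord3}--\eqref{eq:ex-II-coord3-max}, I would apply the two-cell formula $\zwei(t;t_1,t_2)=\PP(X\in t_1\mid X\in t)\,\PP(X\in t_2\mid X\in t)\,[\mu(t_1)-\mu(t_2)]^2$ from \eqref{eq:1step-alternative}. For a cut at $(3,c)$ the means differ only in their $X_3$-contribution; using \eqref{eq:ex-expectation3} on each half gives $\mu(t_1)-\mu(t_2)=(c_1-c_2)/2$, and the uniform probabilities $(c-c_1)/(c_2-c_1)$ and $(c_2-c)/(c_2-c_1)$ combine to produce \eqref{eq:ex-II-coord3}. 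For a cut at $(1,c)$ the means differ only through \eqref{eq:ex-expectation12}, and the factor $(b_1+b_2-1)/2$ survives, yielding \eqref{eq:ex-II-coord1}; the formula for coordinate $2$ is symmetric. Finally, the suprema \eqref{eq:ex-II-coord1-max}--\eqref{eq:ex-II-coord3-max} follow because each expression has the form $\mathrm{const}\cdot(c-\alpha)(\beta-c)$, a concave parabola maximized at the midpoint $c=(\alpha+\beta)/2$ with value $\mathrm{const}\cdot(\beta-\alpha)^2/4$.

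There is no real obstacle here; the work is bookkeeping. The only nontrivial step is the cubic expansion producing $H$, and the only care required is to match the exact normalisations (factors of $4$, $9$, $16$) between the variance and the squared-mean terms so that \eqref{eq:ex-var} comes out in the stated form.
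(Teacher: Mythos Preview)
Your proposal is correct and matches the paper's proof essentially step by step: the same independence-based factorisation for the expectations, the same cubic identity $p^3-q^3=(p-q)(p^2+pq+q^2)$ to produce $H$ in the variance computation, and the same use of \eqref{eq:1step-alternative} for the impurity decreases. The only cosmetic difference is that the paper invokes the AM--GM inequality $xy\le (x+y)^2/4$ to locate the maximisers, while you phrase it as maximising a concave parabola at the midpoint; these are of course the same observation.
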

	
	\begin{proof}
    	\begin{enumerate}
        	\item Proof of \eqref{eq:ex-expectation12}.
        		\begin{align*}
        			\E\big[ &(X_1  - 0.5)(X_2 - 0.5) | X_1 \in [a_1,a_2], X_2 \in [b_1,b_2] \big]\\
        			&= \frac{1}{a_2- a_1} \int_{a_1}^{a_2} (x-0.5) \dx x \frac{1}{b_2-b_1} \int_{b_1}^{b_2} (x-0.5)\dx x.
         		\end{align*}
         		We have
         		\begin{align*}
         			\int_{a_1}^{a_2} (x-0.5)\dx x = \frac{1}{2}(x-0.5)^2 \Bigg|_{a_1}^{a_2} = \frac{(a_2-0.5)^2 -(a_1-0.5)^2}{2} = \frac{(a_1+a_2-1)(a_2-a_1)}{2}.		\end{align*}
        	Thus,
        	\begin{align*}
        		\E[(X_1 - 0.5) (X_2 - 0.5) | X_1 \in [a_1,a_2], X_2 \in [b_1,b_2]] = \frac{1}{4}(a_1+a_2-1)(b_1+b_2-1).
        	\end{align*}
        	\item Proof of \eqref{eq:ex-expectation3}. Conditionally on $X_3 \in [c_1,c_2]$, $X_3$ is uniformly distributed on $[c_1,c_2]$.
        	\item Proof of \eqref{eq:ex-var}. We first calculate
        	\begin{align*}
        		&\E[(X_1- 0.5)^2(X_2- 0.5)^2  | X_1 \in [a_1,a_2], X_2 \in[b_1,b_2] ]\\
        		&=\frac{1}{(b_2-b_1)(a_2-a_1) } \int_{a_1}^{a_2} (x-0.5)^2 \dx x \int_{b_1}^{b_2} (x-0.5)^2 \dx x.
        	\end{align*}
        	It holds that, using $y^3 - z ^3 =  (y-z)(y^2+yz+z^2)$,
        	\begin{align*}
        		\int_{a_1}^{a_2} (x-0.5)^2 \dx x  &= \frac{1}{3} (x-0.5)^3 \bigg|_{a_1}^{a_2} \\&= \frac{1}{3} \big( (a_2-0.5)^3 - (a_1 -0.5)^3 \big) \\
        		&= \frac{1}{3} (a_2-a_1) \big[ (a_2- 0.5)^2 + (a_2 -0.5)(a_1-0.5) +(a_1 -0 .5)^2  \big] .
        	\end{align*}
        	Thus,
        	\begin{align*}
        		\E[(X_1- 0.5)^2(X_2- 0.5)^2  | X_1 \in [a_1,a_2], X_2 \in[b_1,b_2] ] = \frac{H(a_1,a_2) H(b_1,b_2)}{9}.
        	\end{align*}
        	Consequently,
        	\begin{align*}
        		\var( (X_1 - 0.5) (X_2 -0.5) | &X \in t ) \\
        		&=\E[(X_1- 0.5)^2(X_2 - 0.5)^2  | X_1 \in [a_1,a_2], X_2 \in[b_1,b_2] ] \\
        		&\qquad - \big( \E[(X_1- 0.5)(X_2 - 0.5)  | X_1 \in [a_1,a_2], X_2 \in[b_1,b_2] ] \big)^2  \\
        		&=\frac{H(a_1,a_2) H(b_1,b_2)}{9} - \frac{1}{16}(a_2 + a_1  - 1)^2(b_2 + b_1 - 1)^2.
        	\end{align*}
        	The claim follows from
        	\begin{align*}
        		\var&( m(X) | X \in t) \\
          &= \var( (X_1 - 0.5) (X_2 -0.5) | X \in t ) + \var( X_3 | X \in t) \\
        		&=\var\big(X_1 -0.5)(X_2 -0.5) | X_1 \in [a_1,a_2] , X_2 \in [b_1,b_2] \big) + \var(X_3 | X_3\in [c_1,c_2]).
        	\end{align*}
         	\item Proof of \eqref{eq:ex-II-coord1} and \eqref{eq:ex-II-coord1-max}. Let $C \in [a_1,a_2]$ and $t_1 = \{x \in t : x_1 \leq C \}$ and $t_2 = t \setminus t_1$ be the other daughter cell. By \Cref{eq:ex-expectation12,eq:ex-expectation3}, it holds that
         	\begin{align}\label{eq:ex-proof01}
         		 \E(m(X)| X\in t_1) - &\E( m(X)| X\in t_2) \\ &= \frac{1}{4} \big( (a_1+C-1)(b_1+b_2-1) - \frac{1}{4}(a_2+C-1)(b_1+b_2-1) \big) \\ &= \frac{1}{4} (b_1+b_2-1) (a_1-a_2).
         	\end{align}Then, employing \eqref{eq:1step-alternative} and \eqref{eq:ex-proof01},
         	\begin{align*}
         		\zwei( t; t_1,t_2) &= \PP(X\in t_1| X\in t) \PP(X\in t_2 | X \in t) \big( \E(m(X)| X\in t_1) - \E( m(X)| X\in t_2)   \big)^2 \\
         		&=\frac{1}{16}\frac{C-a_1}{a_2-a_1}\frac{a_2-C}{a_2-a_1} (b_1+b_2-1)^2(a_2-a_1)^2 \\
         		&=\frac{(C-a_1)(a_2-C)}{16} (b_1+b_2-1)^2 \\
         		&\leq \frac{(a_2-a_1)^2}{64}(b_1+b_2-1)^2.
         	\end{align*}
         	In the last step we used the inequality of arithmetic-geometric mean, i.e. $xy \leq \frac{(x+y)^2}{4}$ for $x,y\geq 0$. Note that there is equality if and only if $C-a_1 = a_2-C$, i.e. the value $C$ that maximizes the expression $\zwei_t(1,C)$ is $C= \frac{a_2+a_1}{2}$. This shows the claim.
         	\item The proof of \eqref{eq:ex-II-coord2-max}, \eqref{eq:ex-II-coord3} and \eqref{eq:ex-II-coord3-max} is analogous. 
    	\end{enumerate}
	\end{proof}
    Having this at hand, we prove the first part of \Cref{prop:example}.
    \begin{proofname}{Proof of statement \ref{itemprop:exSID} from \Cref{prop:example}}
        Recall that $t=[0,1]^2 \times [c_1,c_2]$. By \eqref{eq:ex-II-coord1-max}, \eqref{eq:ex-II-coord2-max} and \eqref{eq:ex-II-coord3-max} it immediately follows that
        \begin{align*}
            \sup_{j\in\{1,2,3\}, c\in t^{(j)} }\zwei_t(j,c) = \frac{c_1+c_2}{2}.
        \end{align*}
        Furthermore, we get
        \begin{align*}
            \var(m(X) | X\in t) = \frac{1}{144} + \frac{(c_2-c_1)^2}{2}.
        \end{align*}
    \end{proofname}

\begin{lemma}\label[lemma]{lemma:2 step > 1 step}
        Let $t=\bigtimes_{j=1}^d[a_j,b_j]$ be a cell and $j_0\in\{1,\dots,d\}$, $\beta_0 \in[a_{j_0},b_{j_0}]$. Then,
        \[
            \sup_{\bc_1, \bc_2} \zwei_t( j_0,\beta_0 | \bc_1,\bc_2) \geq \zwei_t(j,\alpha)
        \]
        for any $j=1,\dots,d$ and $\alpha\in[a_j,b_j]$.
    \end{lemma}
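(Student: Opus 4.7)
The plan is to reduce the claim to the monotonicity of $\zwei$ under refinement of partitions: if a partition $P'$ of $t$ refines a partition $P$ of $t$ (every cell of $P'$ lies inside some cell of $P$), then $\zwei(t; P') \geq \zwei(t; P)$. This is immediate from the identification of $\eins(t; P)$ with the minimal conditional $L^2$-approximation error $\min_{f} \E[(m(X) - f(X))^2 \mid X \in t]$ over functions $f$ piecewise constant on the cells of $P$ (stated just after \Cref{def:impurity_decrease}), because refining $P$ only enlarges the admissible class of functions and therefore can only decrease this minimum.

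Given this, the strategy is to select second-level splits $\bc_1, \bc_2$ (depending on $(j,\alpha)$) so that the 4-cell partition obtained by first cutting $t$ at $(j_0,\beta_0)$ and then cutting its two daughters at $\bc_1,\bc_2$ refines the 2-cell partition of $t$ given by $(j,\alpha)$ alone. Refinement monotonicity then gives $\zwei_t(j_0,\beta_0 \mid \bc_1, \bc_2) \ge \zwei_t(j,\alpha)$ for this particular choice, and taking the supremum on the left yields the lemma.

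Two cases arise. If $j \neq j_0$, I would take $\bc_1 = \bc_2 = (j,\alpha)$; the resulting 4-cell partition is the product grid obtained from the two orthogonal cuts and visibly refines the single-cut $(j,\alpha)$-partition. If $j = j_0$, I assume WLOG $\alpha \leq \beta_0$ (the reverse case is symmetric). The daughter $t_1 = \{x \in t : x_{j_0} \leq \beta_0\}$ strictly contains the lower $(j_0,\alpha)$-cell, while $t_2 = \{x \in t : x_{j_0} > \beta_0\}$ sits entirely inside the upper $(j_0,\alpha)$-cell. Taking $\bc_1 = (j_0,\alpha)$ splits $t_1$ into exactly the lower $(j_0,\alpha)$-cell and a sliver contained in the upper one; for $\bc_2$ I pick any valid non-degenerate split of $t_2$ (e.g.\ a cut at $(j_0,(\beta_0 + b_{j_0})/2)$), whose two subcells both remain inside the upper $(j_0,\alpha)$-cell. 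Hence the 4-cell partition again refines the $(j_0,\alpha)$-partition.

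The only real technical wrinkle I anticipate is the handling of degenerate splits, for which the convention $\zwei = -\infty$ is triggered by empty daughter cells: if $(j_0,\beta_0)$ itself is degenerate (e.g.\ $\beta_0 \in \{a_{j_0}, b_{j_0}\}$) the LHS is identically $-\infty$ and one must appeal to the implicit non-degeneracy assumed on first splits elsewhere in the paper, while if $(j,\alpha)$ is degenerate the RHS is $-\infty$ and the inequality is automatic. A small auxiliary point in Case 2 is the existence of a valid $\bc_2$, which requires $t_2$ to have nonempty interior, i.e.\ $\beta_0 < b_{j_0}$; this again follows from non-degeneracy of $(j_0,\beta_0)$.
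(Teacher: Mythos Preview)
Your proof is correct and follows essentially the same approach as the paper: both cases use the identical choice of second-level cuts (the cross-cut $\bc_1=\bc_2=(j,\alpha)$ when $j\neq j_0$, and $\bc_1=(j_0,\alpha)$ together with an arbitrary valid $\bc_2$ when $j=j_0$). The only difference is that you justify the resulting inequality via the clean refinement-monotonicity principle $\zwei(t;P')\ge\zwei(t;P)$ derived from the variational characterization of $\eins$, whereas the paper instead invokes \Cref{remark:2step-1step} (and, in the $j=j_0$ case, the intermediate identity \eqref{eq:inter} from its proof) to obtain the same bounds by explicit computation.
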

    \begin{proof}
    Let $j \in \{1,\dots, d\}$ and $\alpha \in [a_j,b_j]$ be arbitrary. Let $\tilde t_1 = \{x \in t | x_j \leq \alpha \}$, $\tilde t_2 := t \setminus \tilde{t}$. Let $t_1 = \{ x \in t |x_{j_0} \leq \beta_0 \}$ and $t_2 = t\setminus t_1$. First, let us assume that $j \neq j_0$. Define
    \begin{align*}
       \bar{t}_{j,l} := t_j \cap \tilde t_{l}, \ j=1,2, \ l=1,2.
    \end{align*}
    Clearly, the partition given by $(\bar{t}_{j,l})_{j,l=1,2}$ can be obtained from splitting $t_1$ and $t_2$ at $j,\alpha$, but also by splitting $\tilde t_1$ and $\tilde t_2$ at $j_0,\beta_0$. Hence, in view of \Cref{def:impurity_decrease} and \Cref{remark:2step-1step}, it holds that
    \begin{align*}
        \sup_{\bc_1, \bc_2} \zwei_t( j_0,\beta_0 | \bc_1,\bc_2) \geq \zweiargneu{4}\left(t; (\bar{t}_{j,l})_{j=1,2;l=1,2} ) \right) \geq \zweiargneu{2}(t; \tilde t_1, \tilde t_2 ).
    \end{align*}
    Here, for clarification, we used $\zweiargneu{L}$ instead of $\zwei$ for the decrease associated to a partition of size $L$. Consider now the case $j=j_0$. If $\alpha = \beta_0$, then the result follows directly from \Cref{remark:2step-1step}. Next, assume $\alpha < \beta_0$. Let
    \begin{align*}
        t_{11}' &= \tilde t_1 \\
        t_{12}' &= \{x \in t | \alpha < x_j \leq \beta_0 \} \\
        t_{21}' &= \{x \in t | \beta_0 < x_j \leq \beta' \} \\
        t_{22}' &= \{ x \in t | x_j > \beta' \}.
    \end{align*}
    Here, $\beta' > \beta_0$ is arbitrary. Clearly $t_{jl}'$, $j=1,2$, $l=1,2$ is a partition of $t$ and can be obtained by first splitting $t$ into $t_1$ and $t_2$ (i.e. at $j_0, \beta_0$), and then splitting $t_1,t_2$ accordingly.  Therefore, it suffices to show that
    \[ \zwei^{(2)}(t ; \tilde t_{1}, \tilde t_2) \leq \zweiargneu{4}\left(t ; (t'_{jl})_{j=1,2,l=1,2} \right). \]
    Note that $\tilde t_2 = t \setminus \tilde t_1 = t_{21}' \cup t_{22}' \cup t_{12}'$ and
    \begin{align*}
        \zwei^{(2)}(t ; \tilde t_{1},\tilde t_2 ) = \PP(t_{11}'| t)  [\mu(t)-\mu(t_{11}')]^2 + \PP( \tilde t_2| t) [\mu(t)-\mu(\tilde t_2)]^2,
    \end{align*}
    writing $\PP(t'|t)= \PP(X\in t'|X\in t)$. Observe that
    \begin{align*}
        \PP(\tilde t_2 |t)[\mu(t)-\mu( \tilde t_2 )]^2 \leq \PP( t_{12}' |t ) [\mu(t) -\mu(t_{12}')]^2 + \PP( t_{21}' \cup t_{22}' | t ) [\mu(t) -\mu(t_{21}' \cup t_{22}')]^2,
    \end{align*}
    which follows as in the proof of \Cref{remark:2step-1step}, see equation \eqref{eq:inter} therein. Repeating this argument (for the second summand on the right hand side) yields
    \begin{align*}
         \zweiargneu{2}(t ; \tilde t_{1}, \tilde t_{2} ) \leq \sum_{\substack{j=1,2 \\ l=1,2}} \PP(t'_{jl} | t ) [\mu(t)-\mu(t'_{jl})]^2 = \zweiargneu{4}\left(t; (t'_{jl})_{j=1,2,l=1,2} \right).
    \end{align*}
    The case $j=j_0$ and $\alpha > \beta_0$ can be shown analogously.
    \end{proof}
    
Next, we prove the second statement of \Cref{prop:example}.
	\begin{proofname}{Proof of statement \ref{itemprop:exPSID} from \Cref{prop:example}}
		In order to simplify the proof below, we use the notation 
		\[
		    t=[0.5-l_1,0.5+l_2]\times [0.5-m_1,0.5+m_2]\times[c_1,c_2]
		\]
		with $l_1,l_2,m_1,m_2\in[-0.5,0.5]$ and $-l_1< l_2$, $-m_1< m_2$ for an arbitrary rectangular set $t$. Now, given some arbitrary cut $\bc = (j,c) \in \{1,\dots, d\} \times t^{(j)}$ in the $j$-th coordinate of $t$, define
		\begin{align*}
			S & := \sup_{\bc_1,\bc_2} \zwei( \bc| \bc_1,\bc_2 ).
		\end{align*}
  Throughout this proof, we denote by $t_1$ and $t_2$ the two daughter cells obtained from splitting $t$ at $\bc$. We wish to bound
		\begin{align}
		    \frac{\var(m(X)|X\in t)}{S}\label{eq:example - var}
		\end{align}
		by a constant. From \Cref{lemma:stuff} we have for cells $t=[a_1,a_2]\times [b_1,b_2]\times[c_1,c_2]$ that
		\[
		    \var(m(X)|X\in t)= \frac{H(a_1,a_2)H(b_1,b_2)}{9} - \frac{1}{16} (a_2+a_1-1)^2(b_2+b_1-1)^2 + \frac{(c_2-c_1)^2}{12} =: A+C,
		\]
		where $H(x,y )=(x-0.5)^2+(x-0.5)(y-0.5) + (y-0.5)^2$, $C=\frac{(c_2-c_1)^2}{12}$ and $A=\var(m(X)|X\in t)-C$. Now, if $A\leq C$ by \Cref{lemma:2 step > 1 step} and equation \eqref{eq:ex-II-coord3-max} we have
		\begin{align*}
		    \frac{\var(m(X)|X\in t)}{S} & \leq \frac{32C}{(c_1-c_2)^2}=\frac{8}{3}.
		\end{align*}
		Now, without loss of generality assume $l_2\geq l_1$ and $m_2\geq m_1$. Note that this implies $l_2, m_2 >  0$. Let $\tau<\frac{1}{2}$. For $A\geq C$ we distinguish between the following cases.

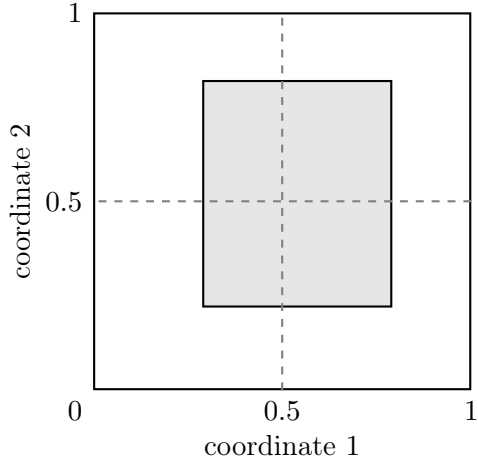
\begin{figure}[h]\centering
        \begin{tikzpicture}[thick,centered]
            \node[rectangle,draw, minimum height=5cm, minimum width=5cm] (r) at (0,0) {};
             \node[rectangle,draw, minimum height=3cm, minimum width=2.5cm, fill=gray!20] (s) at ($(r.center) + (0.2,0.1)$) {};

              \node[below, align = center] at (r.south) {$0.5$\\ \text{coordinate $1$} };
              \node[left] at (r.west) {$0.5$};
              \node[above, align=center, rotate = 90] at ($(r.west) - (0.7,0)$) {coordinate $2$};
              \node[below left] at (r.south west) {$0$};
              \node[left] at (r.north west) {$1$};
              \node[below] at (r.south east) {$1$};
              \draw[dashed, gray] (r.south) -- (r.north);
             \draw[dashed,gray ] (r.east) -- (r.west);

		\end{tikzpicture}
	\caption{In case \ref{case:middle} from the proof of the second statement of \Cref{prop:example}, the point $(0.5,0.5)$ is included in $[0.5-l_1,0.5+l_2]\times [0.5-m_1, 0.5+m_2]$}\label{fig:case-middle}
\end{figure}

		\begin{enumerate}
		    \item\label{case:middle} Let $\frac{l_1}{l_2}, \frac{m_1}{m_2}\geq 1-\tau$ (see \Cref{fig:case-middle}). Additionally, assume the first split is
      \begin{align}\label{eq:example - prob}
          \bc = (1, 0.5 +\alpha) \text{ or }\bc = (2,0.5+\alpha).
      \end{align}
		for some $\alpha\in(-l_1,l_2)$. We consider the case when $\bc = (1,0.5+\alpha)$. The other case is analogous. Then, following \Cref{remark:2step-1step} and \Cref{lemma:stuff}, we have
		    \begin{align*}
                  S & \geq \sup_{c_1,c_2} \left[ \PP(X \in t_1|X \in t)\zwei_{t_1}\left( 2,c_1 \right)+\PP(X \in t_2|X \in t)\zwei_{t_2}\left( 2,c_2 \right) \right]\\
		        & = \frac{\alpha+l_1}{l_1+l_2}\frac{1}{64}(m_1+m_2)^2(\alpha-l_1)^2+\frac{l_2-\alpha}{l_1+l_2} \frac{1}{64} (m_1+m_2)^2(\alpha+l_2)^2\\
		        & = \frac{(m_1+m_2)^2}{64(l_1+l_2)}\big((\alpha+l_1)(\alpha-l_1)^2+(l_2-\alpha)(\alpha+l_2)^2\big).
		    \end{align*}
		    By setting $\epsilon=l_2-l_1$ we obtain
		    \begin{align*}
		        (\alpha+l_1)(\alpha-l_1)^2 & = (\alpha+l_2-\epsilon)(\alpha-l_2+\epsilon)^2 \geq (\alpha+l_2)(\alpha-l_2)^2-4l_2^2\epsilon
		    \end{align*}
		    where the inequality can be checked by elementary calculations using that $l_1\leq l_2$. Furthermore, calculations yield
		    \begin{align*}
		        (\alpha+l_2)(l_2-\alpha)^2 + (l_2-\alpha)(\alpha + l_2)^2
    			=(\alpha +l_2 )(l_2-\alpha) 2l_2
    			&=2l_2(l_2^2-\alpha^2)\\&= 2l_2^3(1-\kappa^2)
		    \end{align*}
		    with $\kappa:=\frac{\alpha}{l_2}$. This implies 
		    \begin{align*}
		        S & \geq \frac{(m_1+m_2)^2}{128l_2}2l_2^3\left(1-\kappa^2-2\frac{\epsilon}{l_2}\right)=\frac{(m_1+m_2)^2}{64}l_2^2\left(2\frac{l_1}{l_2}-\kappa^2-1\right)\\
		        & \geq\frac{(m_1+m_2)^2}{64}l_2^2(1-2\tau-\kappa^2).
		    \end{align*}
		    Additionally, we have
		    \[
		        \var(m(X)|X\in t) \leq 2A \leq \frac{2(l_1^2-l_1l_2+l_2^2)(m_1^2-m_1m_2+m_2^2)}{9}
		    \]
                as well as
                \begin{align*}
                    \frac{m_1^2-m_1m_2+m_2^2}{(m_1+m_2)^2} = 1 - 3 \frac{m_1 m_2}{ (m_1 + m_2)^2 } \leq 1,
                \end{align*}
                because $m_1$ and $m_2$ have the same sign. This then yields
		    \begin{align*}
		        \frac{\var(m(X)|X\in t)}{S}&\leq \frac{128}{9}\cdot\frac{m_1^2-m_1m_2+m_2^2}{(m_1+m_2)^2}\cdot\frac{l_2^2}{l_2^2(1-2\tau-\kappa^2)}\\&\leq\frac{128}{9(1-2\tau-\kappa^2)}\\&\leq \frac{128}{9\gamma}
		    \end{align*}
		    for some $\gamma\in(0,1)$, if
		    \begin{align}
		        1-2\tau-\kappa^2\geq\gamma.\label{eq:example - inequ}
		    \end{align}
		    \item Let $\frac{l_1}{l_2}\leq 1-\tau$ and $\frac{m_1}{m_2}\geq \tau - 1$. 
                    We divide case (b) into the following cases and shall see that a common bound is
                    \begin{align*}
                        \frac{\var(m(X)|X\in t)}{S} \leq \frac{128}{9} \Big( 1 + 3 \frac{1-\tau}{\tau^2} \Big).
                    \end{align*}
                    \begin{itemize}
                        \item[(b.1)] Assume $\frac{m_1}{m_2} \leq 0$ and $\frac{l_1}{l_2} \geq 0$.
                        Following \Cref{lemma:2 step > 1 step} and \Cref{lemma:stuff} we obtain
                        \begin{align*}
                            S \geq \sup_{c}\ \zwei_t( 1,c ) = \frac{1}{64} (l_1+l_2)^2(m_2-m_1)^2.
                        \end{align*}
                        Using that $l_1$, $l_2$ (resp. $m_1, m_2$) have the same sign (resp. different signs) we can bound
                        \begin{align*}
                            \frac{l_1^2 - l_1l_2 + l_2^2}{(l_1+l_2)^2} = 1 -3  \frac{l_1l_2}{(l_1+l_2)^2} \leq 1
                            \end{align*}\text{and}\begin{align*}\frac{m_1^2-m_1m_2 + m_2^2}{(m_2-m_1)^2} = 1 + \frac{m_1m_2}{(m_2-m_1)^2} \leq 1.
                        \end{align*}
                        Using the bound for $\var(m(X)|X\in t)$ from (a) we get $\frac{\var(m(X)|X\in t) }{S} \leq \frac{128}{9}$.
                        \item[(b.2)] Assume $\frac{m_1}{m_2} \leq 0$ and $\frac{l_1}{l_2} \leq 0$. 
                         Again by \Cref{lemma:2 step > 1 step} and \Cref{lemma:stuff} it holds
                         \begin{align*}
                               S\geq \sup_{c}\ 
                               \zwei_t(2,c)
                               =\frac{(m_1+m_2)^2(l_2-l_1)^2}{64}.
                         \end{align*} 
                      Using the bound for $\var(m(X)|X\in t)$ from (a) and a bound as in case (b.1), we get
                      \begin{align*}
                             \frac{\var(m(X)|X\in t)}{S}\leq \frac{128}{9}\frac{(m_1^2-m_1m_2+m_2^2)}{(m_1+m_2)^2} &= \frac{128}{9}\Big( 1 - 3 \frac{m_1m_2}{(m_1+m_2)^2} \Big)\\
                             &= \frac{128}{9}\Big(1 - 3 \frac{m_1}{m_2(1+\frac{m_1}{m_2})^2} \Big).
                      \end{align*}
                       Observe that
                       \begin{align*}
                           \frac{\partial}{\partial x} \Big( 1 - \frac{3x}{(1+x)^2} \Big)= \frac{3(x-1)}{(1+x)^3} < 0,
                       \end{align*}
                        if $-1 <  x < 1$. Thus, we can bound
                        \begin{align*}
                             1 - 3 \frac{m_1}{m_2(1+\frac{m_1}{m_2})^2} \leq 1- \frac{3(\tau-1)}{\tau^2},
                        \end{align*}
                        which yields
                        \begin{align*}
                              \frac{\var(m(X)|X\in t)}{S}\leq \frac{128}{9} \Big( 1 + 3 \frac{1-\tau}{\tau^2} \Big).
                        \end{align*}
                        \item[(b.3)] Assume $\frac{m_1}{m_2} \geq 0$. Along the lines in (b.2), we have
                        \begin{align*}
                             \frac{\var(m(X)|X\in t)}{S}\leq \frac{128}{9}\frac{(l_1^2-l_1l_2+l_2^2)}{(l_2-l_1)^2} = \frac{128}{9} \Big( 1 + \frac{l_1}{l_2(1-\frac{l_1}{l_2})^2} \Big).
                        \end{align*}
                        Now, similarly to (b.2), by studying $f(x) = 1 + \frac{x}{(1-x)^2}$, we can bound
                        \begin{align*}
                             \frac{\var(m(X)|X\in t)}{S} \leq \frac{128}{9} \Big( 1 + \frac{1-\tau}{\tau^2} \Big).
                        \end{align*}
                    \end{itemize}
		    \item Let $\frac{l_1}{l_2}\geq \tau - 1$ and $\frac{m_1}{m_2}\leq 1-\tau$. This case is analogous to (b).
		    \item Let $\frac{l_1}{l_2},\frac{m_1}{m_2}\leq\tau-1$. Without loss of generality, assume $\frac{m_1}{m_2}\geq\frac{l_1}{l_2}$. Using Lemma \ref{lemma:2 step > 1 step} and Lemma \ref{lemma:stuff} we obtain
		    \[
		        S\geq \frac{(m_1+m_2)^2(l_2-l_1)^2}{64}
		    \]
		    as well as
		    \[
		        \var(m(X)|X\in t)\leq 2A.
		    \]
		    Thus 
		    \begin{align*}
		        \frac{\var(m(X)|X\in t)}{S} & \leq \frac{8}{9}\frac{16(m_1^2-m_1m_2+m_2^2)(l_1^2-l_1l_2+l_2^2)-9(m_2-m_1)^2(l_2-l_1)^2}{(m_1+m_2)^2(l_2-l_1)^2}\\  
		        & =: \frac{8}{9}F(m_1,m_2,l_1,l_2).
		    \end{align*}
		    Calculating the derivative with respect to $m_1$ we obtain
		    \begin{align*}
                \frac{\partial F}{\partial m_1} & =-\frac{12m_2(m_2-m_1)(l_1+l_2)^2}{(m_2+m_1)^3(l_2-l_1)^2}< 0,
		    \end{align*}
		    where we note that $\frac{l_1}{l_2},\frac{m_1}{m_2}\leq\tau-1$ together with $l_2\geq l_1$, $m_2\geq m_1$ imply that $l_2,m_2>0$, $l_1,m_1\leq 0$. Additionally, we used $-m_1<m_2$ by the definition of $t$. Now, since $\frac{m_1}{m_2}\geq \frac{l_1}{l_2}$ we have $m_1\geq \frac{m_2l_1}{l_2}$. Thus
		    \begin{align*}
		        \frac{\var(m(X)|X\in t)}{S} & \leq \frac{8}{9}\sup_{s\in [ \frac{m_2l_1}{l_2},m_2]}F(s,m_2,l_1,l_2)=\frac{8}{9}F\left(\frac{m_2l_1}{l_2},m_2,l_1,l_2\right)\\
		        & = \frac{8}{9}\frac{16(l_1^2-l_1l_2+l_2^2)^2-9(l_2-l_1)^4}{(l_2-l_1)^2(l_2+l_1)^2}\\
                    &  = \frac{8}{9} \frac{ \big( 4(l_1^2-l_1l_2+l_2^2) + 3(l_2-l_1)^2  \big) \big( 4(l_1^2-l_1l_2+l_2^2) - 3(l_2-l_1)^2 \big)  }{(l_2-l_1)^2(l_2+l_1)^2} \\
                    &   = \frac{8}{9} \frac{ 7l_1^2 - 10 l_1l_2 + 7l_2^2 }{(l_2-l_1)^2}  \\
		        & =: \frac{8}{9} G(l_1,l_2).
		    \end{align*}
		    Calculating the derivative with respect to $l_1$ yields
		    \[
		        \frac{\partial G}{\partial l_1}=\frac{4l_2(l_1+l_2)}{(l_2-l_1)^3}>0.
		    \]
		    Since $\frac{l_1}{l_2}\leq \tau-1$ we have $l_1\leq l_2(\tau-1)$. Thus
         \begin{align*}        
         \frac{\var(m(X)|X\in t)}{S}  \leq \frac{8}{9} \sup_{ s\in[-l_2, l_2(\tau-1)]} G(s,l_2) = \frac{8}{9}G(l_2(\tau-1),l_2) = \frac{8}{9}\frac{7\tau^2 - 24 \tau + 24}{(2-\tau)^2} 
      \end{align*}
  
		    Note that we required \eqref{eq:example - prob} as well as \eqref{eq:example - inequ} in order to bound \eqref{eq:example - var} in (a). Note that 
		    \begin{align*}
		        \eqref{eq:example - inequ} \Leftrightarrow \alpha^2\leq l_2^2(1-2\tau-\gamma)\Leftrightarrow -l_2\sqrt{1-2\tau-\gamma}\leq\alpha \leq l_2\sqrt{1-2\tau-\gamma}
		    \end{align*}
		    and thus
		    \begin{align*}
		        \PP(\eqref{eq:example - prob}, \eqref{eq:example - inequ})=\PP (\eqref{eq:example - inequ}|\eqref{eq:example - prob})\PP(\eqref{eq:example - prob})\geq \sqrt{1-2\tau-\gamma}\frac{2}{3}.
		    \end{align*}
		    This implies the assertion, since the right hand side converges to $\frac{2}{3}$ for $\tau,\gamma\rightarrow 0$.
		\end{enumerate}
	\end{proofname}

\subsection{Verification of Conditions \ref{cond:T_dim} and \ref{cond:T_boundary} for \Cref{ex:oblique,ex:interaction_forests}}\label{subsec:proof_oblique}
We check Conditions \ref{cond:T_dim} and \ref{cond:T_boundary} for
Oblique Trees (\Cref{ex:oblique}) and Interaction Forests (\Cref{ex:interaction_forests}). Recall the definition of the grid and related notation from \Cref{def:grid}.
     \begin{proofname}{Proof of Conditions \ref{cond:T_dim} and \ref{cond:T_boundary} for Example \ref{ex:oblique}}
            We start with proving \Cref{cond:T_dim}. Observe that it is sufficient to assume $d=2$ and to show that for $t=[0,1]^2$
            \begin{align}\label{eq:complexity_simple}
                \# \lbrace G_n^{\#} \cap t': t' \in \mathcal{C}_1(t) \rbrace \leq C g_n^{c_0},
            \end{align}
            where $g_n$ is the number of grid points in one dimension defined in \Cref{def:grid} and $C>0$ is a constant. Then, for general $d$, a bound in \eqref{cardinality-bound} is given by $Cd^2g_n^{c_0}$ and the claim follows due to the assumption that $d$ is of polynomial order in $n$, and $g_n = O(n^{1+\varepsilon})$. In order to derive \eqref{eq:complexity_simple}, a simple bound is given by the fact that the grid points within the unit square $[0,1]^2$ can be separated into at most $2{(g_n+1)^2  \choose 2} = O(g_n^4)$ subsets (this follows from taking all ${(g_n+1)^2 \choose 2}$ pairs of grid points and drawing the line crossing through them).This establishes \Cref{cond:T_dim}\\
	Next, we check \Cref{cond:T_boundary}. Let $\mathcal{K}$ be the sets of the form $K = \{ x\in [0,1]^d : b_1x_{k_1} + b_2 x_{k_2} \leq s\}$ or $t = \{ x\in [0,1]^d : b_1x_{k_1} + b_2 x_{k_2} \geq  s\}$, where $b_1,b_2 \neq 0$, $s \in \R$ and $k_1,k_2 \in \{1,\dots, d\}$. By the definition of the algorithm, it easily checked that \ref{cond:T_boundary} follows if there exist constants $C,C',\beta >0$ and a set $\mathcal{H}$ such that (I) for each $H\in \mathcal H$, $\# H \leq C g_n^{d-1}$, (II) for each $K\in\mathcal{K}$ there exists $H\in\mathcal{H}$ such that $K \Delta K^{\#} \subseteq \bigcup_{q\in H} B_q,$ and (III) $\# \mathcal H \leq C'n^{\beta}$.
Let us describe the construction of such a set $\mathcal{H}$. Suppose w.l.o.g. that $K =  \{ x\in [0,1]^d : b_1x_{1} + b_2 x_{2} \leq s\}$ and let $x \in K \Delta K^{\#}$. Denote by $B(x)$ the box $B \in G_n$ such that $x \in B$. Let $\partial K$ be the boundary of $K$. Let $x'\in \partial K \cap B(x)$. Let $b = (b_1,b_2,0,\dots, 0)^t \in \R^d$. Let $d(x,\partial K)$ be the minimal distance between $x$ and boundary of $K$. Then by definition of $b$, elementary geometry and the Cauchy-Schwartz inequality,
\begin{align*}
	d(x,\partial K) = \frac{| b_1 x_1 + b_2 x_2 - s |}{\| b\|} &= \frac{\left| \langle b, x-x' \rangle  + \langle b, x' \rangle -s  \right|}{\|b\|} = \frac{ \left| \langle b, x-x' \rangle \right|}{\|b\|} \\&\leq \sqrt{ (x_1- x_1')^2 + (x_2 - x_2 ')^2 }\leq \sqrt{2}g_n,
\end{align*}
since $x' \in K$ and both $x,x' \in B(x)$. Consequently, $K \Delta K^{\#} \subseteq \{  x \in [0,1]^d : s - \sqrt{2} g_n^{-1}\|b\| \leq  b_1 x_1 + b_2 x_2 \leq s+\sqrt{2}g_n^{-1}\|b\| \} =: \bar{K}$. 

Now since any such $K$ can be represented by a line connecting any two points $y,z$ on the boundary of $[0,1]^2$, we can w.l.o.g. assume $d=2$. Furthermore, we can assume w.l.o.g. that $y_2=1$ and $z_2=0$, which means that the angle between the line and $\{x_2=0\}$ resp. $\{x_2=1\}$ is at most $45$ degrees. Let $\underline{y_1}$, $\underline{z_1} \in \{-\frac{1}{g_n},0,\frac{1}{g_n},\dots, 1, 1 + \frac{1}{g_n}\}$ resp. $\overline{y_1},\overline{z_1}$ be the closest grid points smaller or equal (resp. larger) than $y_1$ and $z_1$ and write $\underline{y}= (\underline{y_1}, 1)^t, \underline{z}=(\underline{z_1},0)^t$ and $\overline{y},\overline{z}$ are defined similarly. Furthermore, define $\underline{y^*}$, $\overline{y^*}$,  $\underline{z^*}$ $\overline{z^*}$, where this time we go two grid positions to the left resp. right from $\underline{y}$, $\underline{z}$, resp. $\overline{y}$, $\overline{z}$. Here, we extend the grid positions to the left and right of the interval $[0,1]$. Let $G^{\text{lower}}$ be the straight line connecting $\underline{y^*}$, $\underline{z^*}$ and let $G^{\text{upper}}$ be the one connecting $\overline{y^*}$ with $\overline{z^*}$. See \Cref{fig:proof_oblique} for an illustration. Now observe the following.
\begin{itemize}
	\item The distance between $y$ and the points where $\partial \bar K$ intersects with $\{ x_2=1  \}$ is at most $2 g_n^{-1}$ (recall that $g_n$ is the grid parameter). This follows from simple geometric reasoning using that $\sin(\pi/4) = 1/\sqrt{2}$. The same holds for $z$ and $\{x_2=0\}$. Thus, $\bar{K}$ is contained in the area between $G^{\text{lower}}$ and $G_{\text{upper} }$,
	\item the area between $G^{\text{lower} }$ and $G^{\text{upper} }$ is $5g_n^{-1}$,
	\item each of $G^{\text{lower}}$ and $G^{\text{upper} }$ can cut at most $2g_n$ boxes in the grid. 
\end{itemize}
Thus, the union of these boxes and the area between $G^{\text{lower}}$ and $G^{\text{upper}}$ cover $\bar{K}$, and it can contain at most $9g_n$ boxes $B \in G_n$. This shows (I), (II). Furthermore, from the construction, the number of different such sets needed to cover any possible $\bar{K}$ is of order $d^2g_n^2$, establishing (III).
\begin{figure}
    \centering
    \includegraphics[scale=0.7]{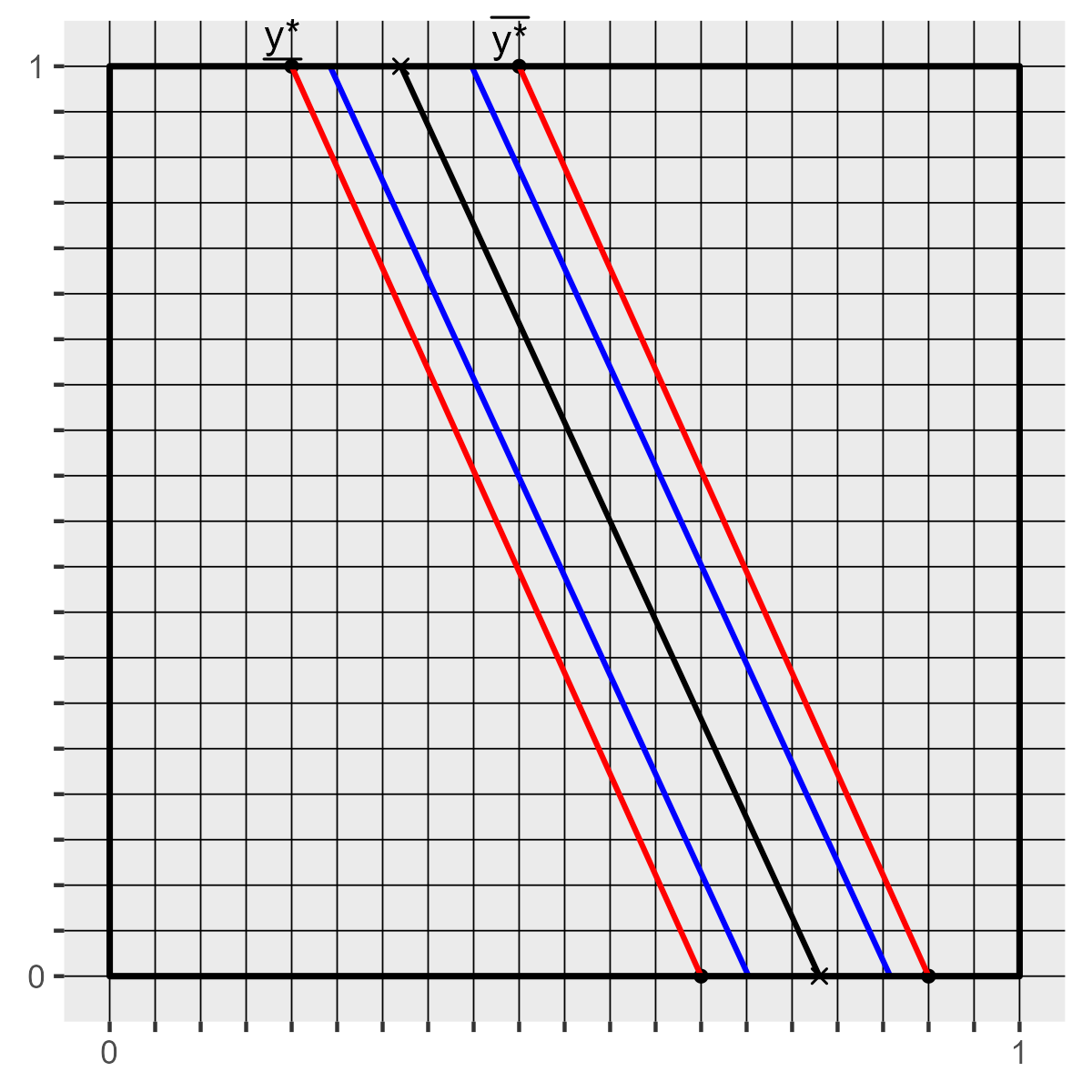}
    \caption{Illustration to the proof of \Cref{ex:oblique}. The red lines are $G^{\text{lower}}$ and $G^{\text{upper}}$. The set $\bar{K}$ is contained between the blue lines.}
    \label{fig:proof_oblique}
\end{figure}
\end{proofname}
\label{subsec:proof-main2}
\begin{proofname}{Proof of Conditions \ref{cond:T_dim} and \ref{cond:T_boundary} for \Cref{ex:interaction_forests}} We make use of \Cref{thm:main3}.
Let $t$ be a subset that can be constructed through the algorithm and denote by $t'$ an arbitrary cell constructed from $t$ through a single iteration. In view of the possible types of partitions used in Interaction Forests, we see that the number of possible sets of the form $t' \cap G_n^{\#}$ is bounded by $2\cdot 7d(d-1)(g_n+1)^2$. By \Cref{cond:C2} we have $d=O(n^{K_0})$ and thus \Cref{cond:T_dim} is established. To show that \Cref{cond:T_boundary} is fulfilled, observe that the set difference between the boundary of $t$ and the boundary of $t'$ is contained in a set of the form \begin{align*}
     \{ x \in [0,1]^d : x_{j_1} = s_1 \} \cup \{ x \in [0,1]^d : x_{j_2} = s_2 \}
 \end{align*}
 with $j_1,j_2 \in \{1,\dots,d\}$ and $s_1,s_2 \in [0,1]$. This implies that a suitable choice for $\mathcal{H}$ in \Cref{cond:T_boundary} is given by the (grid points in $Q^d$ corresponding to the) sets $G(j_1,q_1) \cup G(j_2,q_2)$, see \Cref{def:grid} where $j_1,j_2 \in \{1,\dots, d\}$ and $q_1,q_2 \in Q$ are arbitrary. It is not difficult to see that $\mathcal H$ fulfills the desired conditions.
\end{proofname}

\section{Formal Definition of a Split Determining Sequence}\label{sec:split_determining_sequences}
Below, we provide a formal definition of split determining sequences as introduced in \Cref{subsec:treegrowingrules}.
\begin{definition}
        Let $W, k \in \N$ and for each $l=0,2,\dots, k-2$ let 
        \begin{align*}
            r_{w,l} \in \left(\{1,\dots,d\} \times [0,1] \right)^{2^{l}} \ (w=1,\dots,W)
        \end{align*}
        be given. Denote by $\mathcal{R}_k$ be the family $(r_{w,l})_{w=1,\dots,W, l=0,2,\dots k-2}$. We call such a family $\mathcal{R}_k$ a \quot{split determining sequence}.      
    \end{definition}
    \begin{definition}
        Let $\mathcal{R}_k$ be a split determining sequence. We say that a tree growing rule (of depth $k$) is associated with $\mathcal{R}_k = (r_{w,l})_{w=1,\dots,W; l=0,2,\dots,k -2}$ if at any second level $l$ of the tree, starting from the root $\rootcell$, for each of the $2^l$ cells nodes in this level, that is for each $a\in \{1,2\}^l$ and corresponding $t=\rootcell_a$, a set of possible splits is given by $\{ \bc_{l,a}^1 ,\dots , \bc_{l,a}^W\}$. The splits $\bc_{l,a}^w$ are of the form $(\gamma,U)$ with \[ U = t^{(\gamma)}_{\text{min}} + \mathcal U(t^{(\gamma)}_{\text{max}} - t^{(\gamma)}_{\text{min} }) \] where $(\gamma, \mathcal U)$ correspond to the entries in $r_{w,l} = (r_{w,l,a})_{a \in \{1,2\}^l} \in \left( \{1,\dots, d\}\times[0,1] \right)^{2^l}$.
    \end{definition}
Let for $W,k\in \N$, $w=1,\dots,W$, $l=0,2,\dots, k-2$ and $a \in \{1,2\}^l$
   \begin{align*}
        R_{w,l,a} \iid \text{Unif}( \{1,\dots,d\} \times [0,1])
    \end{align*}
Set $R^*_{w,l} := (R_{w,l,a})_{a\in\{1,2\}^l }$. Then, the family $\bR_k$ introduced in \Cref{subsec:treegrowingrules} is given by $\bR_k = (R^*_{w,l})_{w=1,\dots,W; l=0,2,\dots, k-2}$.

\vskip 0.2in
\bibliography{library}

\end{document}